\newtheorem{thm}{Theorem}[section]
\newtheorem{dfn}[thm]{Definition}
\newtheorem{lem}[thm]{Lemma}
\newtheorem{cor}[thm]{Corollary}
\newtheorem{prop}[thm]{Proposition}
\newcommand{\mc}[1]{\mathcal{#1}}
\newcommand{\mb}[1]{\mathbb{#1}}
\newcommand{\bfl}[1]{\left\lfloor #1 \right\rfloor}
\newcommand{\bcl}[1]{\left\lceil #1 \right\rceil}
\newcommand{\sub}{\subset}
\newcommand{\sm}{\setminus}
\newcommand{\ov}{\overline}
\newcommand{\eps}{\varepsilon}
\newcommand{\es}{\emptyset}
\newcommand{\aA}{\alpha}
\newcommand{\bB}{\beta}
\newcommand{\gG}{\gamma}
\newcommand{\dD}{\delta}
\newcommand\total{\operatorname{sum}}
\title{Cycle-complete Ramsey numbers}
\author{Peter Keevash\thanks{Mathematical Institute, University of Oxford, Oxford, UK. 
E-mail: keevash@maths.ox.ac.uk. Research supported in part by ERC Consolidator Grant 647678.}
\and Eoin Long\thanks{Mathematical Institute, University of Oxford, Oxford, UK. 
E-mail: long@maths.ox.ac.uk.}
\and Jozef Skokan\thanks{Department of Mathematics, London School of Economics, Houghton Street, London WC2A\thinspace2AE, UK, \emph{and} Department of Mathematics, University of Illinois, 1409 W.\/ Green Street, Urbana IL 61801, USA.  
E-mail: j.skokan@lse.ac.uk. Research supported in part by NSF Grant DMS-1500121.}}
\date{\today}
\newcommand{\np}{\vspace{2mm}}
\newcommand{\noi}{\noindent}
\newcommand{\pow}{\gamma }
\begin{document}

\maketitle

\abstract{
The Ramsey number $r(C_{\ell},K_n)$ is the smallest natural number $N$ 
such that every red/blue edge-colouring of a clique of order $N$ contains
a red cycle of length $\ell$ or a blue clique of order $n$. 
In 1978, Erd\H os, Faudree, Rousseau and Schelp conjectured that 
$r(C_{\ell},K_n) = (\ell-1)(n-1)+1$ for $\ell \geq n\geq 3$ 
provided $(\ell,n) \neq (3,3)$. 

We prove that, for some absolute constant $C\ge 1$, we have 
$r(C_{\ell},K_n) = (\ell-1)(n-1)+1$ provided 
$\ell \geq C\frac {\log n}{\log \log n}$. 
Up to the value of $C$ this is tight since we also show that, for any $\eps >0$ and  $n> n_0(\eps )$, we have
$r(C_{\ell }, K_n) \gg (\ell -1)(n-1)+1$ for all 
$3 \leq \ell  \leq  (1-\eps )\frac {\log n}{\log \log n}$.

This proves the conjecture of Erd\H os, Faudree, Rousseau and Schelp for large $\ell $, 
a stronger form of the conjecture due to Nikiforov, 
and answers (up to multiplicative constants) two further questions of Erd\H os, Faudree, Rousseau and Schelp.
}
\thispagestyle{empty}

\section{Introduction}

Graph Ramsey numbers are a central topic of research in Combinatorics.
Given two graphs $G$ and $H$, the Ramsey number $r(G,H)$ 
is the smallest natural number $N$ such that every red/blue colouring 
of the edges of the complete graph $K_N$ on $N$ vertices 
contains a~red copy of~$G$ or a~blue copy of~$H$. 
The existence of~$r(G,H)$ follows from Ramsey's theorem \cite{Ramsey}, 
but determining or accurately estimating these parameters 
presents many challenging problems.\np 

The classical Ramsey numbers are the graph Ramsey numbers $r(G,H)$
where $G$ and $H$ are cliques. Erd\H{o}s and Szekeres \cite{ES} showed $r(K_n,K_n) \leq 2^{(1+o(1))2n}$,
and later Erd\H{o}s \cite{Erdos-Ram-LB} showed $r(K_n,K_n) \geq 2^{(1+o(1))n/2}$, 
in one of the first instances of the probabilistic method. 
Both bounds changed very little over the past 70 years, 
despite progress by Thomason \cite{Thom} and Conlon \cite{Con} on 
the upper bound, and by Spencer \cite{Spen} on the lower bound. 
Another intensively studied Ramsey number is $r(K_3,K_n)$;
it was a long-standing open problem to determine its order of magnitude,
which is now known to be $\Theta \big ( \frac {n^2}{\log n} \big )$,
due to theorems of Ajtai, Koml\'os and Szemer\'edi \cite{A-K-S} 
and Kim \cite{Kim}. Recent analyses of the triangle-free process 
independently by Bohman and Keevash \cite{B-K} and 
by Fiz Pontiveros, Griffiths and Morris \cite{FGM}, 
together with an improved upper bound due to Shearer \cite{Shearer}, 
have now determined $r(K_3,K_n)$ to within a multiplicative factor of $4+o(1)$.\np 

At the other end of the spectrum, sparse graphs tend to have small Ramsey 
numbers. In this context, Chv\'atal, R\"odl, Szemer\'edi and Trotter \cite{CRST} 
proved that if $G$ and $H$ have bounded maximum degree then 
$r(G,H) = O(v(G)+v(H))$, where $v(G)$ denotes the number of vertices of the 
graph~$G$. A similar bound was obtained by Chen and Schelp  \cite{CS} 
under the assumption of bounded arrangeability. After intense effort 
\cite{A, FS-density, FS-BE, K-R, K-R-II, KS-sparse},
a longstanding conjecture of Burr and Erd\H{o}s \cite{BE} that such bounds hold 
only assuming bounded degeneracy was recently confirmed by Lee \cite{Lee}.\np 

In this paper, we will focus on the cycle-complete Ramsey numbers $r(C_{\ell },K_n)$. 
For any connected graph $H$, Chv\'atal and Harary~\cite{ChvHar} observed that $r(H,K_n) \ge (v(H)-1)(n-1)+1$. This is shown by the red/blue edge-coloured clique of order $(v(H)-1)(n-1)$,  in which the red edges consist of $n-1$ disjoint cliques of order $v(H)-1$ and all the remaining edges are blue.
Burr and Erd\H os \cite{BE-good} asked when equality holds in the Chv\'atal--Harary bound (the `Ramsey goodness' question, see e.g.\ \cite{ABS}).
When $H=C_\ell$, for $\ell \geq n^2-2$
Bondy and Erd\H{o}s \cite{Bondy-Erdos} showed the equality
\begin{equation} \label{eqn: r(C_l ,K_n) conjectured}
r(C_{\ell },K_n) = (\ell -1)(n-1)+1.
\end{equation}
Erd\H{o}s, Faudree, Rousseau and Schelp \cite{EFRS}
noted that $r(C_3,K_n) = r(K_3,K_n)$ grows much faster
than a linear function of $n$ (as discussed above),
and posed the problem of determining the critical $\ell$ 
at which the change in behaviour of $r(C_{\ell },K_n)$ occurs.
They conjectured (see also \cite[Chapter 2]{CG-EoG}) that 
\eqref{eqn: r(C_l ,K_n) conjectured} holds for 
$\ell  \geq n \geq 3$ provided $(\ell ,n) \neq (3,3)$.\np

There is a large literature on $r(C_{\ell },K_n)$.
An improved lower bound on $r(C_{\ell },K_n)$ for small $\ell$ was given by 
Spencer \cite{Spen-CCRN}. Caro, Li, Rousseau and Zhang \cite{C-L-R-Z} improved the upper bound on $r(C_{\ell },K_n)$ of Erd\H{o}s et al. \cite{EFRS} for small even $\ell $;  Sudakov \cite{Sudakov} gave a similar improvement for small odd $\ell $. 
Several authors \cite{FS, rosta, Y-H-Z, BJMRRY, Schiermeyer} confirmed 
the Erd\H{o}s--Faudree--Rousseau--Schelp conjecture for small values of $n$. 
Schiermeyer \cite{Sch} improved the result of Bondy and Erd\H{o}s
by showing that \eqref{eqn: r(C_l ,K_n) conjectured} 
holds for $\ell  \geq n^2 - 2n>3$. 
Nikiforov \cite{Nikiforov} substantially extended this range, 
proving that \eqref{eqn: r(C_l ,K_n) conjectured} holds
for $\ell  \geq 4n +2$.
Moreover, he conjectured (Conjecture 2.14 in \cite{Nikiforov}) that in fact \eqref{eqn: r(C_l ,K_n) conjectured} 
already holds at a much lower threshold, namely that
for all $\eps > 0$ there is $n_0$ such that 
$r(C_{\ell },K_n) = (\ell -1)(n-1)+1$ provided
$\ell \geq n^{\eps }$ and $n \geq n_0$.\np 

Our main result proves both the Erd\H{o}s--Faudree--Rousseau--Schelp conjecture 
for large $\ell $ and Nikiforov's conjecture. 
In fact, we prove \eqref{eqn: r(C_l ,K_n) conjectured} 
for a much wider range of parameters. 

\begin{thm} \label{thm: clique-vs-cycle}
There is $C \ge 1$ so that $r(C_{\ell }, K_n) = (\ell -1)(n-1)+1$ for $n\geq 3$ and $\ell \geq C\frac {\log n} {\log \log n}$.
\end{thm}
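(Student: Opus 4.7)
The lower bound $r(C_\ell, K_n) \geq (\ell-1)(n-1)+1$ is the Chv\'atal--Harary bound noted in the introduction, so it suffices to prove the matching upper bound. Set $N = (\ell-1)(n-1)+1$, and suppose for contradiction that a red/blue colouring of $K_N$ contains neither a red $C_\ell$ nor a blue $K_n$; let $G$ be the red graph. Since $\alpha(G) < n$, Tur\'an's theorem applied to the complement gives $e(G) \geq N(N-n+1)/(2(n-1))$, and so the average red degree is at least $\ell - 2$.

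The first step would be a standard degree-cleaning: iteratively removing vertices whose current red degree is below some fixed fraction of $\ell$, I expect to arrive at an induced subgraph $G'$ on at least $N/2$ vertices with minimum red degree $\Omega(\ell)$ while still satisfying $\alpha(G') < n$. In $G'$ I would then grow a red BFS tree from a chosen vertex $v_0$, with layers $L_0, L_1, \ldots, L_k$, and argue by dichotomy. If the tree has depth $k \geq \ell - 1$, a red path of length $\ell - 1$ is immediate; using P\'osa-type rotations supported by minimum red degree $\Omega(\ell)$, the cycle lengths achievable along such a path span an interval of length $\Omega(\ell)$, which I would arrange to contain $\ell$. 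Otherwise the tree is shallow, so by $|V(G')| \geq N/2$ and pigeonhole some layer $L_i$ with $i \leq \ell - 2$ has size $\geq n$; inside $L_i$ I would use a dependent random choice argument on the bipartite graph between $L_{i-1}$ and $L_i$ to find either a large common-neighbour set extending a $v_0$-rooted red path to a red $C_\ell$, or a large independent set in $G$, hence a blue $K_n$.

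The main obstacle is achieving exact cycle length $\ell$ rather than merely ``at least $\ell$'': the rotation step must produce cycle lengths with enough granularity to hit $\ell$ on the nose. Here the min red degree $\Omega(\ell)$ in $G'$ is exactly what provides flexibility, since successive rotations shift the length by small controlled amounts and we have $\Omega(\ell)$ rotations available before the structural constraints bite. The second, subtler obstacle is the quantitative bookkeeping in the critical regime $\ell \sim \log n / \log\log n$: iterated red-BFS branching at rate $\Omega(\ell)$ over $\ell - 1$ levels produces $\ell^{\Omega(\ell)}$ leaves, which exceeds $n$ exactly when $\ell \log \ell \gtrsim \log n$, i.e.\ $\ell \gtrsim \log n / \log \log n$. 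This threshold pins down the constant $C$ and aligns with the paper's matching lower bound; below it, neither horn of the dichotomy can be made to close, which is precisely why the theorem is sharp up to the value of $C$.
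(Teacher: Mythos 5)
The lower-bound half is fine, but the upper-bound sketch has a fatal gap: your dichotomy implicitly assumes that the red BFS tree grown from $v_0$ spans a constant fraction of $G'$, i.e.\ that $G'$ is essentially connected with many vertices reachable from $v_0$. The graphs you must rule out are precisely small perturbations of the extremal colouring, in which the red graph is (close to) a disjoint union of $n-1$ cliques of order $\ell-1$ with all cross edges blue. In that situation the red graph already has minimum degree $\ell-2$ (so degree cleaning changes nothing), every red BFS tree terminates after one or two layers inside a single component of order about $\ell$, every layer has size at most about $\ell \ll n$, and no component contains a red path of length $\ell-1$, let alone a $C_\ell$. So neither horn of your dichotomy applies: there is no deep tree on which to run P\'osa rotations, and the pigeonhole ``some layer has size $\geq n$'' is simply false because the tree only sees $O(\ell)$ vertices; dependent random choice inside a piece of order $\ell^{O(1)}$ can never produce an independent set of size $n$. (A smaller issue: the standard cleaning argument gives \emph{some} subgraph of minimum degree $\Omega(\ell)$, not one on $\geq N/2$ vertices, though with $\alpha(G)<n$ this can be patched; the connectivity issue cannot.)

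The real difficulty, which your sketch does not touch, is global: one must show that a union of roughly $n$ dense pieces, each just too small to contain $C_\ell$, is incompatible with $\alpha(G)\leq n-1$ on $(\ell-1)(n-1)+1$ vertices. The independence-number bound forces red edges \emph{between} pieces, and the cycle of length exactly $\ell$ is built by stitching paths through several pieces, which raises a genuine parity obstruction (the connecting structures are bipartite-like), resolved in the paper by showing almost all ``hubs'' are parity broken using $\alpha(G)<n$. The paper's route is a stability theorem (Lemma \ref{lem: almost clique decomp}) decomposing $G$ into approximate cliques with no edges between them, obtained via BFS decompositions, dependent random choice to build hubs, control of inter-hub matchings, and then a final absorption argument combined with induction on $n$ (which supplies $\delta(G)\geq \ell-1$). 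Your rotation/DRC dichotomy cannot be repaired to cover the near-extremal case, so the proposal does not prove the theorem.
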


\noi {\emph {Remarks:}} 
All logarithms in this paper are to base $2$.
Note that $r(C_{\ell },K_1) = 1$ and 
$r(C_{\ell },K_2) = \ell $ for all $\ell \geq 3$;
we include the condition $n\geq 3$ 
only to avoid division by $0$ 
in the lower bound on $\ell$.\np

The bound in Theorem \ref{thm: clique-vs-cycle} is best possible 
up to the value of $C$, as shown by our next result.

\begin{thm} \label{thm: lower bound}  
Given $\eps >0$ there is $n_0(\eps )$ so that $r(C_{\ell }, K_n) > n\log n \gg (\ell -1)(n-1)+1$ for all $n \geq n_0(\eps )$ and $3 \leq \ell  \leq  (1-\eps )\frac {\log n}{\log \log n}$.
\end{thm}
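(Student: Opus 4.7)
The approach is the classical alteration method of Erdős applied to the Erdős--Rényi random graph $G = G(N, p)$, taking $N := 2n\log n$ and $p := c(\log n)^{\eps/(1-\eps)}/n$ for a small constant $c = c(\eps) > 0$. I aim to show that, with probability $1-o(1)$, $G$ has at most $N/2$ copies of $C_\ell$ and $\alpha(G) < n$. Removing one vertex from each $C_\ell$ in such a $G$ then leaves a $C_\ell$-free graph on at least $N/2 = n\log n$ vertices with $\alpha < n$; colouring its edges red and the remaining edges of $K_N$ blue gives $r(C_\ell, K_n) > n\log n$. The inequality $n\log n \gg (\ell-1)(n-1)+1$ is immediate, since $(\ell-1)(n-1)+1 \leq \ell n \leq (1-\eps) n\log n/\log\log n = o(n\log n)$.

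\textbf{Expected cycle count.} The value of $p$ is chosen so that $Np = 2c(\log n)^{1/(1-\eps)}$. The assumption $\ell \leq (1-\eps)\log n/\log\log n$ is equivalent to $(\log n)^{\ell/(1-\eps)} \leq n$, which gives
\[
(Np)^\ell = (2c)^\ell (\log n)^{\ell/(1-\eps)} \leq (2c)^\ell \cdot n.
\]
Taking $c \leq 1/4$, the expected number of copies of $C_\ell$ in $G$ is at most $(Np)^\ell/(2\ell) \leq n/(2\ell)$, which is $o(N)$.

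\textbf{Bounding the independence number.} A first-moment estimate gives
\[
\mathbb{P}[\alpha(G) \geq n] \leq \binom{N}{n}(1-p)^{\binom{n}{2}} \leq \exp\!\Bigl(n\log(eN/n) - p\binom{n}{2}\Bigr).
\]
Here $\log(eN/n) = \log(2e\log n) = O(\log\log n)$, while $p\binom{n}{2} = \Omega\bigl(n(\log n)^{\eps/(1-\eps)}\bigr)$. Since $(\log n)^{\eps/(1-\eps)}/\log\log n \to \infty$ for every fixed $\eps>0$, this probability is $o(1)$ for $n \geq n_0(\eps)$. Combining with Markov's inequality applied to the cycle count, with probability $1-o(1)$ both desired events hold, which produces the required graph $G^*$ as above.

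\textbf{Main obstacle.} The main technical point is the balancing of $p$. A coarser union bound using $\log\binom{N}{n} \leq n\log N$ would demand $p \gtrsim \log n/n$, which blows up $(Np)^\ell$ and makes the deletion infeasible. The key observation is that $N/n = \log n$, so $\log(eN/n) = O(\log\log n)$, shaving a factor of $\log n/\log\log n$ off the threshold for $p$. This is precisely the slack that lets the argument reach $\ell$ of order $\log n/\log\log n$, matching the upper bound in Theorem \ref{thm: clique-vs-cycle} up to a multiplicative constant.
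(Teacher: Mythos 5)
Your proposal is correct and is essentially the paper's argument: an alteration on $G(N,p)$ with $N=2n\log n$, where the key point in both is that $N/n=\log n$ makes the first-moment bound on $\alpha(G)\ge n$ cost only $O(\log\log n)$ per vertex, so a $p$ with $np$ polylogarithmic in $\log n$ suffices, and Markov plus vertex deletion handles the short cycles. The only (immaterial) differences are your $\eps$-dependent choice $p=c(\log n)^{\eps/(1-\eps)}/n$ versus the paper's $p=3\log\log n/(n-1)$, and that you delete copies of $C_\ell$ for each fixed $\ell$ separately, whereas the paper deletes a vertex from every cycle of length at most $(1-\eps)\log n/\log\log n$ to get a single graph serving all $\ell$ in the range; both satisfy the statement as quantified.
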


In combination, Theorems \ref{thm: clique-vs-cycle}
and \ref{thm: lower bound} answer (up to the constant $C$)
two further questions of Erd\H{o}s et al. \cite{EFRS}
regarding $r(C_{\ell },K_n)$, namely 
(i) the location of the critical value of $\ell$ 
for the transition in behaviour of $r(C_{\ell },K_n)$,
and (ii) the choice of $\ell$ that minimises $r(C_{\ell },K_n)$.
The answer to both questions is 
$\ell = \Theta \big ( \frac {\log n}{\log \log n} \big )$.\np

An overview of the proof of Theorem \ref{thm: clique-vs-cycle} and the 
organisation of the paper is as follows. We suppose for a~contradiction 
that there is some $C_\ell$-free graph $G$ with $v(G)=N=(\ell-1)(n-1)+1$
 and independence number $\aA(G) \le n-1$. By induction we can also 
assume $G$ has minimum degree $\dD(G) \ge \ell-1$. The main task of the 
paper is to prove the stability result (Lemma 
\ref{lem: almost clique decomp}) that $G$ is close in structure to the 
lower bound construction described above, i.e.\ $G$ can be mostly 
partitioned into approximate cliques of size about $\ell$ (and also 
less than $\ell$, as there is no $C_\ell$). Then in Section 
\ref{sec:pf}, following various arguments to clean up the approximate 
structure, we will see that it is incompatible with our assumptions,
and so obtain a contradiction that proves the theorem. \np

In the next section, after the short proof of 
Theorem \ref{thm: lower bound}, we gather various tools
needed for the proof of the stability result.
Over the following three sections we prove the
existence of approximate decompositions of $G$
into pieces whose properties are gradually strengthened:
in Section~\ref{sec:dense} the pieces are quite dense,
in Section~\ref{sec:hub} they are `hubs'
(highly connected in a certain sense),
and in Section~\ref{sec:stab} they are `almost cliques',
as required for the stability result. \np

\section{Preliminaries}

We start in the next subsection with some notation, then we prove
Theorem \ref{thm: lower bound}.
In the third subsection we collect various 
well-known results that we use in our proofs.
The final subsection of this section describes 
two applications of Breadth First Search.

\subsection{Notation}
We summarise some (mostly) standard graph theory notation 
(see e.g.\ \cite{Bol}) used in this paper.
Let $G$ be a finite graph.
We write $v(G):=|V(G)|$ for the number of vertices
and $e(G):=|E(G)|$ for the number of edges.
Given a vertex $v \in V(G)$, the neighbourhood
of $v$ in $G$ is $N_G(v):= \{y \in V(G): xy \in E(G)\}$.
The degree of $v$ is $d_G(v):=|N_G(v)|$.
The minimum degree is $\delta (G) := \min \{ d(v): v\in V(G)\}$,
the maximum degree is $\Delta (G) := \max \{d(v): v\in V(G)\}$,
and the average degree is $d(G) := 2e(G)/v(G)$.
Given $A \sub V(G)$, the induced graph $G[A]$
has vertex set $A$ and edge set $\{e \in E(G): e \sub A\}$.
Given disjoint sets $A,B \subset V(G)$, 
we let $G[A,B]$ denote the bipartite graph
with parts $A$ and $B$ and edge set
$\{e \in E(G): |e \cap A| = |e \cap B| = 1\}$.
A path $P = x_0x_1\ldots x_{\ell }$ of length $\ell $ 
consists of $\ell + 1$ distinct vertices $x_{0},\ldots ,x_{\ell }$,
where $x_ix_{i+1}$ is an edge for $i\in \{0,\ldots , \ell -1\}$. 
We call $x_0$ and $x_k$ the end vertices of $P$ 
and say that $P$ is an $x_0x_k$-path. 
We say $P$ is internally disjoint from a set $X$ 
if $X$ contains none of the interior vertices 
$\{x_1,\ldots ,x_{\ell -1}\}$ of $P$.
A cycle of length $\ell$, or $\ell$-cycle, is a graph obtained
from a path $P = x_0x_1\ldots x_{\ell-1}$ of length $\ell-1$ 
by adding the edge $x_{\ell -1}x_0$.
Edges of cycles will often be listed modulo $\ell $, 
so that $x_{\ell -1}x_{\ell }$ represents $x_{\ell -1}x_0$. 
We say $I \sub V(G)$ is independent if $G[I]$ has no edges.
The independence number $\aA(G)$
is the size of a largest independent set in $G$.
Given natural numbers $m\leq n$ we let 
$[m,n] := \{m, m+1,\ldots ,n\}$. To simplify the presentation, 
we may omit floor and ceiling signs when they are not crucial.

\subsection{The lower bound}
\label{subsection: contruction}

The lower bound construction comes from the
following application of the probabilistic method.

\begin{proof}[Proof of Theorem \ref{thm: lower bound}]
Let $\eps \in (0,1)$, $n > n_0(\eps)$ and $N = 2n \log n$. 
It suffices to prove that there is a graph $G$ on at least $N/2$ 
vertices with $\aA (G) < n$ which does not contain a cycle $C_{\ell }$ 
with $\ell  \leq \ell _0 : =(1-\eps ) \frac {\log n }{\log \log n}$. 
We consider a random graph $G_1 \sim G(N,p)$, 
where $p := \frac{3\log \log n}{n-1}$. 
The expected number of independent sets of order $n$ in $G_1$ is 
\begin{equation*}
\binom {N}{n} (1-p)^{\binom {n}{2}} 
\leq \Big ( \frac {eN}{n} e^{-p(n-1)/2} \Big )^n 
= \big ( 2e (\log n ) e^{-3\log \log n /2} \big )^n \ll 1.
\end{equation*}
On the other hand, the expected number of cycles of length at most $\ell _0$ 
is $\sum _{i\in [3,\ell _0]} (Np)^{i} \leq 2(Np)^{\ell _0}  
\leq  2(7 \log n \log \log n)^{(1-\eps )\log n / \log \log n} \ll N$. 
By Markov's inequality applied to both of these expectations,
with positive probability $G_1$ satisfies $\aA (G_1) < n$ 
and has $\leq N/2$ cycles of length at most~$\ell _0$. 
Fixing a choice of such $G_1$ and deleting a vertex 
from each cycle of length at most $\ell _0$ 
leaves a graph $G$ with the required properties.
\end{proof}

\subsection{Tools}

In this subsection we collect several well-known results.
The first is very simple, but we include a short
proof for the convenience of the reader.

\begin{prop} \label{prop: moving to the k-core}
For any graph $G$,
\begin{enumerate}[(i)]
\item if $G$ has no subgraph 
of minimum degree at least $k$ then 
$e(G) \leq \binom {k}{2} + (v(G)-k)(k-1)$;
\item $G$ contains a subgraph $G_1$ with 
$\dD(G_1) \geq d(G)/2$;
\item $G$ contains a bipartite subgraph 
$G_2$ with $d(G_2) \geq d(G)/2$.
\end{enumerate}
\end{prop}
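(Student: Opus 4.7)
The three claims are all classical, so I would verify each by a short direct argument rather than invoking heavier machinery. My plan is to prove (i) by induction on $v(G)$, (ii) by a greedy deletion process, and (iii) by a first-moment/random partition argument.

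For (i), the base case is $v(G)\le k$, where trivially $e(G)\le \binom{v(G)}{2}\le \binom{k}{2}$. For the inductive step, when $v(G)>k$ the hypothesis (no subgraph with minimum degree $\ge k$) applied to $G$ itself yields a vertex $v$ with $d_G(v)\le k-1$. Any subgraph of $G-v$ is still a subgraph of $G$, so $G-v$ still has no subgraph of minimum degree $\ge k$; by induction, $e(G-v)\le \binom{k}{2}+(v(G)-1-k)(k-1)$, and adding back $v$ contributes at most $k-1$ edges, giving the claim.

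For (ii), I would run the standard peeling procedure: set $H_0=G$, and while the current graph $H_i$ has some vertex $v$ with $d_{H_i}(v)<d(G)/2$, delete it to form $H_{i+1}$. The key observation is that $d(H_{i+1})\ge d(H_i)$ whenever such a $v$ exists, because
\[
2e(H_{i+1}) = 2e(H_i)-2d_{H_i}(v) > 2e(H_i)-d(G)\ge 2e(H_i)-d(H_i) = d(H_i)\cdot(v(H_i)-1),
\]
so $d(H_{i+1})>d(H_i)\ge d(G)$. Since $d(G)>0$ (else the claim is vacuous) the process cannot reach the empty graph, and so terminates at some $G_1$ with $\dD(G_1)\ge d(G)/2$.

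For (iii), I would pick a uniformly random partition $V(G)=A\sqcup B$ (each vertex independently placed in $A$ or $B$ with probability $1/2$). Every edge lies in $G[A,B]$ with probability $1/2$, so $\mathbb{E}[e(G[A,B])]=e(G)/2$. Fix a partition achieving this bound and let $G_2:=G[A,B]$; then $v(G_2)\le v(G)$ and $e(G_2)\ge e(G)/2$, giving $d(G_2)=2e(G_2)/v(G_2)\ge e(G)/v(G)=d(G)/2$.

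None of the three parts presents a real obstacle; the only point that requires a moment's care is the strict inequality $d(v)<d(G)/2$ used in (ii), which is precisely what keeps the average degree from dropping during peeling and hence guarantees the process terminates with a nonempty subgraph.
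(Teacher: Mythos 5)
Your proof is correct and follows essentially the same route as the paper: (i) is the paper's iterative deletion of minimum-degree-at-most-$(k-1)$ vertices recast as induction, (ii) is the same peeling process with average-degree monotonicity in place of the paper's total edge count, and (iii) is the identical random-bipartition expectation argument. The only quibble is your base case in (i): the stated bound only makes sense (and is only ever needed) for $v(G)\ge k$, so the base case should be $v(G)=k$, where $e(G)\le \binom{k}{2}$ is exactly the claim — for $v(G)<k$ the asserted inequality can fail, an implicit restriction shared by the paper's own proof.
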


\begin{proof}
To see (i), note that as any subgraph of $G$ contains 
a vertex with degree at most $k-1$, we may iteratively 
delete such vertices until we obtain a subgraph on $k$ vertices. 
The bound follows by counting edges.
Similarly, for (ii), if there were no such $G_1$
we could reduce $G$ to an empty graph by deleting
vertices of degree less than $d(G)/2$, but then
$e(G) < v(G) d(G)/2$ would be a contradiction.
Lastly, for (iii), note that a random induced 
bipartite subgraph $G_2$ of $G$ has 
$\mb{E} d(G_2) = d(G)/2$.
\end{proof}

Next we state several classical results from extremal graph theory.

\begin{thm}[Tur\'an \cite{Turan}] \label{thm: Turan}
Any graph $G$ satisfies $\aA (G) \geq \tfrac {v(G)}{d(G)+1}$.
\end{thm}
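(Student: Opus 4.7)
The plan is to prove the stronger Caro--Wei bound $\alpha(G) \ge \sum_{v \in V(G)} \frac{1}{d_G(v)+1}$ by a one-line probabilistic argument and then derive the stated inequality by convexity. I would sample a uniformly random linear order $\pi$ on $V(G)$ and let $I \subseteq V(G)$ consist of those vertices $v$ that precede all of their neighbours in $\pi$. The set $I$ is automatically independent, since for any edge $uv$ whichever endpoint comes later in $\pi$ has a neighbour preceding it and so is excluded from $I$.

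Next, for each vertex $v$ the event $\{v \in I\}$ is exactly that $v$ is first among the $d_G(v)+1$ vertices of $\{v\} \cup N_G(v)$, which by symmetry has probability $1/(d_G(v)+1)$. Linearity of expectation then gives
\[
\mathbb{E}[|I|] \;=\; \sum_{v \in V(G)} \frac{1}{d_G(v)+1},
\]
so some realisation of $\pi$ produces an independent set of at least this size, which is the Caro--Wei bound.

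To conclude I would invoke the convexity of $x \mapsto 1/x$ on $(0,\infty)$: by Jensen's inequality (equivalently AM--HM, or Cauchy--Schwarz applied to the sequence $\{a_v = d_G(v)+1\}_{v \in V(G)}$),
\[
\sum_{v \in V(G)} \frac{1}{d_G(v)+1} \;\ge\; \frac{v(G)^2}{\sum_{v}(d_G(v)+1)} \;=\; \frac{v(G)^2}{2e(G)+v(G)} \;=\; \frac{v(G)}{d(G)+1},
\]
using $2e(G)=v(G)\cdot d(G)$ in the last equality. The only step requiring any real thought is this final convexity move; everything else is routine. As a sanity check, bounding $d_G(v) \le \Delta(G)$ pointwise inside the expectation already recovers the weaker estimate $\alpha(G) \ge v(G)/(\Delta(G)+1)$ without convexity, so the substantive content of the theorem lies precisely in replacing the maximum by the average degree, which is exactly what Jensen provides.
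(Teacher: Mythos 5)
Your argument is correct and complete: the random-permutation proof of the Caro--Wei bound $\alpha(G)\ge\sum_{v}1/(d_G(v)+1)$ is sound (the set of vertices preceding all their neighbours is indeed independent, and the probability computation via the closed neighbourhood is right), and the Cauchy--Schwarz/AM--HM step correctly converts this to $v(G)/(d(G)+1)$ using $\sum_v(d_G(v)+1)=2e(G)+v(G)=v(G)(d(G)+1)$. Note, however, that the paper offers no proof to compare against: Theorem~\ref{thm: Turan} is listed among the classical tools and simply cited to Tur\'an, so your contribution is a self-contained derivation (the standard probabilistic one) of a result the authors take as known. Nothing in your write-up needs repair; the only stylistic remark is that for the paper's purposes the weaker statement with average degree is all that is ever used, and your route through the stronger Caro--Wei inequality is a perfectly reasonable, if slightly more elaborate, way to obtain it.
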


\begin{thm}[Dirac \cite{Dirac}]	\label{thm: dirac}
Any graph $G$ with $d(G) \geq v(G)/2$ contains a Hamilton cycle.
\end{thm}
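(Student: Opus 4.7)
The plan is the classical rotation-extension argument via a longest path, ending with a short closure step to build the Hamilton cycle. First I would take a longest path $P = v_0 v_1 \cdots v_k$ in $G$. Maximality forces every neighbour of $v_0$ (and symmetrically of $v_k$) to lie on $P$, since any neighbour $u$ of $v_0$ outside $P$ would yield the strictly longer path $u v_0 v_1 \cdots v_k$. Hence each of $v_0, v_k$ has at least $v(G)/2$ neighbours among $\{v_1, \ldots, v_k\}$.

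Next I would close $P$ into a cycle on the same vertex set by a crossing-edge trick. Set $A = \{i \in \{1, \ldots, k\} : v_0 v_i \in E(G)\}$ and $B = \{i \in \{1, \ldots, k\} : v_{i-1} v_k \in E(G)\}$; both $A$ and $B$ are subsets of $\{1, \ldots, k\}$, and each has size at least $v(G)/2$. Since $|A| + |B| \ge v(G) > k \ge |A \cup B|$, the intersection $A \cap B$ is nonempty, and for any $i \in A \cap B$ the sequence $v_0 v_1 \cdots v_{i-1} v_k v_{k-1} \cdots v_i v_0$ is a cycle $C$ on the vertex set $V(P)$.

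Finally I would argue $C$ is Hamilton. If some $w \in V(G) \setminus V(C)$ existed, then since the degree hypothesis forces $G$ to be connected (any two vertices have a common neighbour, as their neighbourhoods each contain more than half of $V(G)$), one could choose such $w$ adjacent to some $v_j \in V(C)$; then deleting an edge of $C$ incident to $v_j$ and appending $w v_j$ would produce a path on $V(P) \cup \{w\}$ strictly longer than $P$, contradicting the maximality of $P$.

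The delicate point is the pigeonhole step forcing $A \cap B \ne \emptyset$, which is exactly where the $v(G)/2$ threshold is tight and cannot be weakened; everything else is routine path manipulation.
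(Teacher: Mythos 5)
The paper does not prove this statement: it is Dirac's classical theorem, quoted with a citation, so there is no internal proof to compare against. Your argument is the standard longest-path proof (all neighbours of the endpoints lie on the path, the crossing-index sets $A$ and $B$, pigeonhole since $|A|+|B|\ge v(G)>k$ to close the path into a cycle on $V(P)$, then connectivity plus maximality to show the cycle is spanning), and it is correct. A few small points. The paper's $d(G)$ officially denotes \emph{average} degree, under which the statement as printed would be false (a large clique plus an isolated vertex); the intended hypothesis, and the one the paper actually invokes later, is the minimum-degree bound $\delta(G)\ge v(G)/2$, which is exactly what your proof uses when you say each of $v_0,v_k$ has at least $v(G)/2$ neighbours, so your reading is the right one. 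Your parenthetical claim that any two vertices have a common neighbour is only guaranteed for \emph{non-adjacent} pairs (in $K_{n/2,n/2}$ two vertices in different parts have no common neighbour); this is harmless, since adjacent pairs are trivially joined, so connectivity still follows, but the justification should be phrased for non-adjacent pairs, where $N(u),N(v)\subseteq V(G)\setminus\{u,v\}$ forces an intersection. Finally, as with any statement of Dirac's theorem one needs $v(G)\ge 3$ (the paper omits this too); given $v(G)\ge 3$ the degree bound forces the longest path to have $k\ge 2$, so the closed walk you construct is a genuine cycle.
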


\begin{thm}[Bondy \cite{Bon}] \label{thm: bondy}
Any graph $G$ with $d(G) \geq v(G)/2$ 
is either a complete bipartite graph 
or is pancyclic, i.e.\ contains cycles
of all lengths in $[3,v(G)]$.
\end{thm}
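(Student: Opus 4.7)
The plan is to build on Dirac's theorem (Theorem~\ref{thm: dirac}), which supplies a Hamilton cycle $C = v_0 v_1 \cdots v_{n-1} v_0$ in $G$ (where $n = v(G)$), and then to extract shorter cycles by a chord-finding argument that exploits the minimum-degree hypothesis $\dD(G) \geq n/2$. I would argue by downward induction on $\ell$ from $n$ down to $3$ that $G$ contains an $\ell$-cycle, unless the analysis itself forces $G$ to be complete bipartite.

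For the inductive step, given an $\ell$-cycle $D = u_0 u_1 \cdots u_{\ell-1} u_0$ in $G$ with $\ell \geq 4$, the most economical way to produce an $(\ell-1)$-cycle is to locate a short chord $u_i u_{i+2}$, since then $u_0 \cdots u_i u_{i+2} u_{i+3} \cdots u_{\ell-1} u_0$ is a cycle of length $\ell-1$. The hypothesis $\dD(G) \geq n/2$ makes such chords plentiful: each vertex $u_{i+1}$ has at least $n/2$ neighbours, and a double-counting / pigeonhole argument over the pairs $\{u_i, u_{i+2}\}$ should yield either a short chord or strong structural constraints on how neighbourhoods of cycle vertices meet $D$. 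For the base case $\ell = n$, the same chord-on-the-Hamilton-cycle reasoning produces a cycle of length $n-1$ whenever \emph{any} chord $v_i v_{i+2}$ is present on $C$, and one can iterate to cover the intermediate lengths.

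The principal obstacle is the extremal analysis that isolates the complete bipartite family. If no $\ell$-cycle exists for some $\ell$, tracing back through the chord argument must force the Hamilton cycle $C$ to admit no chord $v_i v_{i+2}$ at all. But then the even-indexed and odd-indexed vertices of $C$ each span an independent set, yielding a bipartition $V(G) = A \sqcup B$ with $|A| = |B| = n/2$ and no internal edges; combined with $\dD(G) \geq n/2$, every vertex of $A$ must be joined to every vertex of $B$, so $G = K_{n/2,n/2}$. Since the result is classical, the cleanest write-up is simply to cite \cite{Bon}; the only genuine subtlety is verifying that the extremal configuration is the \emph{balanced} complete bipartite graph, which the minimum-degree bound delivers directly.
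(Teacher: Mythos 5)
The paper does not prove this statement at all --- it is quoted as a classical result of Bondy \cite{Bon} --- so your sketch has to stand on its own, and it has a genuine gap at exactly the point where the theorem lives. (Reading the hypothesis as the minimum-degree condition $\dD(G)\ge v(G)/2$, as you and the paper's later applications implicitly do, is the right interpretation.) The step ``each $u_{i+1}$ has at least $n/2$ neighbours, so a double-counting/pigeonhole argument yields a short chord $u_iu_{i+2}$ or strong structural constraints'' is not a routine count: in $K_{n/2,n/2}$ every cycle alternates between the two parts, so $u_i$ and $u_{i+2}$ always lie in the same part and a distance-two chord \emph{never} exists, even though the degree hypothesis holds. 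So the dichotomy you are appealing to is precisely the content of Bondy's theorem, and no local argument on a single cycle will deliver it; worse, when the short chord is absent on one particular $\ell$-cycle $D$ you learn nothing about vertices outside $D$, so you are nowhere near the global conclusion $G=K_{n/2,n/2}$. The ``tracing back'' step is also a non sequitur: if some length is missing, the downward chain breaks at some intermediate cycle, and what you obtain is a chordlessness statement about \emph{that} cycle, not about the Hamilton cycle $C$.

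The extremal analysis is moreover false even when applied to $C$: the absence of all chords $v_iv_{i+2}$ only forbids chords at cycle-distance two, and does not make the even-indexed vertices independent (a chord such as $v_0v_4$ is still allowed), so the deduction of a bipartition, and hence of $K_{n/2,n/2}$, collapses. (For odd $n$ the even/odd classes do not even partition $C$ consistently, and there the theorem asserts unconditional pancyclicity, which your argument should, but does not, detect.) The classical route, which is what you would have to reproduce, is a global edge count relative to the Hamilton cycle: Bondy shows that a Hamiltonian graph with $e(G)\ge n^2/4$ that misses some cycle length $\ell$ in fact satisfies $e(G)\le n^2/4$, because for that fixed $\ell$ suitable pairs of potential chords exclude one another around $C$, and the equality case is then analysed to force $K_{n/2,n/2}$; the minimum-degree form follows by combining this with Dirac's theorem. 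Either carry out that counting argument in full or, as you yourself suggest, simply cite \cite{Bon}; the sketch in between does not constitute a proof.
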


\begin{thm}[Erd\H{o}s and Gallai \cite{EG}] \label{thm: erdos-gallai}
Any graph $G$ with $d(G) > k-1$ has a path of length $k$.
\end{thm}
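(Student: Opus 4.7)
The plan is to combine a low-degree reduction with a Pósa-style rotation--extension argument on a longest path.

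First I would preprocess $G$ by repeatedly passing to a connected component of maximum average degree (still $> k-1$) and, as long as possible, deleting a vertex $v$ with $d(v) \le (k-1)/2$; a direct count shows this deletion preserves $d > k-1$. The process terminates at a connected graph $G' \subseteq G$ with $d(G') > k-1$ and $\delta(G') \ge k/2$, and since any path in $G'$ is also a path in $G$, it suffices to find one of length $k$ in $G'$. Combining $e(G') \le \binom{v(G')}{2}$ with $2 e(G') > (k-1) v(G')$ forces $v(G') \ge k+1$.

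Next I would let $P = x_0 x_1 \cdots x_m$ be a longest path in $G'$. Maximality forces $N_{G'}(x_0) \cup N_{G'}(x_m) \subseteq V(P)$, so $m \ge \delta(G') \ge k/2$. If $m \ge k$ we are done; otherwise $m < k$, and I would apply the Pósa pigeonhole. Setting $A = \{i \in [1,m] : x_0 x_i \in E(G')\}$ and $B = \{i \in [1,m] : x_{i-1} x_m \in E(G')\}$, we have $|A| = d_{G'}(x_0) \ge k/2$ and $|B| = d_{G'}(x_m) \ge k/2$ while $|[1,m]| = m < k$, so some $i$ lies in $A \cap B$. Such an $i$ yields a cycle $C = x_0 x_i x_{i+1} \cdots x_m x_{i-1} x_{i-2} \cdots x_1 x_0$ of length $m+1$ with $V(C) = V(P)$.

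Finally, since $v(G') \ge k+1 > m+1 = |V(C)|$ and $G'$ is connected, there are adjacent vertices $y \notin V(C)$ and $z \in V(C)$; prepending the edge $yz$ to a length-$m$ traversal of $C$ starting at $z$ yields a path of length $m+1$ in $G'$, contradicting the maximality of $P$. The only non-mechanical step is the Pósa pigeonhole producing $C$; the remaining manoeuvres are standard, and I anticipate no serious obstacles.
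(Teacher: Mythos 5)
Your proof is correct. Note, though, that the paper does not prove this statement at all: it is listed in the ``Tools'' subsection as a classical theorem of Erd\H{o}s and Gallai and is simply cited to \cite{EG}, so there is no proof in the paper to compare against. Your argument is the standard one for this result: the reduction (pass to a densest component and repeatedly delete vertices of degree at most $(k-1)/2$, which preserves average degree $>k-1$) yields a connected $G'$ with $\delta(G')>(k-1)/2$ and $v(G')\ge k+1$; then the P\'osa pigeonhole on a longest path $P=x_0\cdots x_m$ with $m<k$ produces a cycle spanning $V(P)$, and connectivity plus $v(G')>m+1$ extends it to a longer path, a contradiction. All the counting steps check out ($|A|=d(x_0)$, $|B|=d(x_m)$, $|A|+|B|\ge k>m$), and the only cosmetic blemish is the degenerate case $m=1$ (which can only arise when $k=2$), where the ``cycle'' $x_0x_1x_0$ of length $2$ is not a genuine cycle; the final extension step still goes through there, or one can simply observe that a connected graph on at least $3$ vertices has a path of length $2$, so this case never reaches the rotation step.
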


We conclude by stating a version
of Dependent Random Choice (see \cite[Lemma 7.2]{FS-DRC}). 
	
\begin{thm}\label{thm: DRC}
Given $\eps >0$ there is $\delta >0$ so that the following 
holds for $N \geq N_0(\eps )$ and any $N$-vertex 
graph $G$ with at least $N^{2-\delta }$ edges. 
There are disjoint sets $U_{1}, U_2 \subset V(G)$ 
such that, for $i = 1,2$, every $a,a' \in U_i$ satisfies 
$|N_G(a, U_{3-i}) \cap N_G(a', U_{3-i})| \geq N^{1-\eps }$.
\end{thm}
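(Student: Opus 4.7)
The plan is a two-stage Dependent Random Choice (DRC) argument. I would first apply Proposition \ref{prop: moving to the k-core}(iii) to reduce to a bipartite subgraph $G[X,Y]$ with $|X|, |Y| = \Theta(N)$ and $e(G[X,Y]) \geq N^{2-\delta}/2$, so the average degree from either side is at least $N^{1-\delta}/C$.

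In the first stage I would carry out the classical DRC on $G[X,Y]$: pick a uniformly random $t$-subset $T \subseteq Y$ for a parameter $t = \Theta(1/\eps)$, and let $U_1^0 := \{x \in X : T \subseteq N(x)\}$. A convexity bound gives $\mb{E}|U_1^0| \geq N^{1-O(\delta t)}$, while the expected number of pairs in $U_1^0$ with fewer than $N^{1-\eps}$ common neighbors in $Y$ is at most $|X|^2(N^{-\eps})^t = N^{2-\eps t}$. For $\delta$ sufficiently small relative to $\eps$ these estimates are compatible, so an averaging argument produces a good $T$; pruning one vertex from each bad pair yields $U_1 \subseteq X$ of polynomial size in which every pair has at least $N^{1-\eps}$ common neighbors in $Y$.

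In the second stage I would apply DRC again inside the bipartite graph $G[U_1, Y]$. Double-counting the codegree bound on $U_1$, combined with $\sum_y d(y,U_1)^2 \leq |U_1|\sum_y d(y,U_1)$, gives $e(G[U_1,Y]) \geq |U_1| N^{1-\eps}$, so this subgraph is dense enough for DRC. Sampling a random subset $S \subseteq U_1$ of size $\Theta(1/\eps)$ and setting $U_2 := \{y \in Y : S \subseteq N(y)\}$ (again pruning bad pairs) produces $U_2 \subseteq Y$, automatically disjoint from $U_1$, with every pair in $U_2$ having at least $N^{1-\eps}$ common neighbors in $U_1$.

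The main obstacle will be establishing the \emph{reverse} codegree: that every pair $(a,a') \in U_1$ also has at least $N^{1-\eps}$ common neighbors inside $U_2$, not merely inside $Y$. To handle this I would analyze, for each such pair, the expected value of $|N(a)\cap N(a')\cap U_2|$ over the random sample $S$; this equals $\sum_{y\in N(a)\cap N(a')\cap Y}\binom{d(y,U_1)}{|S|}/\binom{|U_1|}{|S|}$, which when summed over pairs of $U_1$ reduces to a moment of the degree sequence $(d(y,U_1))_{y\in Y}$. Combining the stage-one bound $\sum_y d(y,U_1)^2 \geq |U_1|^2 N^{1-\eps}$ with a second-moment/concentration argument, I can remove a sub-polynomial fraction of vertices from $U_1$ so that every surviving pair meets the $\geq N^{1-\eps}$ lower bound in $U_2$; since only sub-polynomially many vertices are removed, the already-established codegrees from $U_2$ into $U_1$ are perturbed only negligibly, which is absorbed by setting the stage-two codegree target slightly above $N^{1-\eps}$. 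The most delicate point is the parameter bookkeeping: $\delta$, $t$ and $|S|$ must be balanced so that the three classes of bad pairs (stage-one bad pairs in $U_1$, stage-two bad pairs in $U_2$, and exceptional pairs in $U_1$ for the reverse codegree) are simultaneously sub-polynomial compared with $|U_1|$ and $|U_2|$, which forces $\delta$ to depend polynomially on $\eps$.
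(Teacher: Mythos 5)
This statement is not proved in the paper at all: it is quoted as a known tool (Lemma 7.2 of the Fox--Sudakov survey on dependent random choice), so the only fair comparison is with that cited proof, and the two-sided statement there is \emph{not} obtained by naively nesting the basic one-sided DRC lemma -- the restriction of the codegrees to the other chosen set is precisely the delicate point. Your proposal founders on exactly this point, in two concrete places. First, your stage-two step does not have workable parameters. After stage one you only guarantee $e(G[U_1,Y])\geq |U_1|N^{1-\eps}$, i.e.\ density about $N^{-\eps}$, while the codegree target $N^{1-\eps}$ is also about an $N^{-\eps}$ fraction of $|U_1|\approx N^{1-o(1)}$. Running DRC on $G[U_1,Y]$ with $|S|=s$, the expected number of pairs of $U_2$ with fewer than $N^{1-\eps}$ common neighbours in $U_1$ is about $N^{2-(\eps-o(1))s}$, so pruning forces $s>2/\eps$; but then the guaranteed size of $U_2$ is only about $N^{1-\Theta(\eps s)}\ll N^{1-\eps}$, which is smaller than the codegree you must later exhibit \emph{inside} $U_2$ for pairs of $U_1$. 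So with the stage-one target set at $N^{1-\eps}$ (as you set it), ``dense enough for DRC'' is false in the worst case, and the claim that one can simply ``prune one vertex from each bad pair'' fails because the expected number of bad pairs can exceed the expected size of $U_2$. (This particular tension can only be relieved by running stage one with a much stronger codegree target, e.g.\ $N^{1-\Theta(\eps^2)}$, which your write-up does not do.)

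Second, and more fundamentally, the reverse codegree -- which you correctly flag as the main obstacle -- is not established by the argument you sketch. The bound $\sum_y d(y,U_1)^2\geq |U_1|^2N^{1-\eps}$ is an aggregate statement and gives no per-pair information: a pair $a,a'\in U_1$ may have all of its $N^{1-\eps}$ stage-one common neighbours of tiny degree into $U_1$ (for instance, a private set $W$ of vertices adjacent only to $a$ and $a'$, with all other pairs served by cores avoiding $\{a,a'\}$ is perfectly consistent with everything stage one gives you). For such a pair $\mathbb{E}|N(a)\cap N(a')\cap U_2|$ can be far below $1$, so no concentration argument helps, and deleting ``a sub-polynomial fraction of $U_1$'' is not shown to suffice: bounding the number of such pairs requires a separate argument, and the natural mass bound only limits the number of bad \emph{pairs}, not the size of a vertex set covering them. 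As it stands the proposal is a plausible plan with the genuinely hard step left open; to repair it you would need either the reparametrised two-stage scheme with a careful treatment of low-degree common neighbourhoods, or the route of the cited proof, whose key point is to arrange large codegrees within a single set before splitting it.
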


\subsection{Breadth First Search}

Here give two applications of Breadth First Search,
namely finding short cycles, and a nice decomposition
of a substantial part of any graph.

We start by describing the well-known construction 
of a breadth first search tree $T$ in a graph $G$ 
rooted at some vertex $x \in V(G)$.
At each step $i \ge 0$, we construct a tree $T_i$
with layers $V_0,\dots,V_i$ which are disjoint
subsets of $V(G)$. Initially, $T_0$ is a tree
with one vertex, namely $V(T_0)=V_0=\{x\}$.
Given $T_{i-1}$ for some $i>0$, 
we let $V_i := N_G(V_{i-1}) \sm V(T_{i-1})$.
If $V_i = \es$ we terminate with $T=T_{i-1}$,
otherwise we obtain $T_i$ from $T_{i-1}$
by adding an arbitrary edge of $G$ from each 
vertex in $V_i$ to some vertex in $V_{i-1}$.
It will be useful to consider the first layer
which does not cause the tree to grow significantly,
in the sense of the following simple proposition. \np

\begin{prop} \label{prop: bfs partial decomposition}
Let $\pow > 1$ and let $G$ be an $N$-vertex graph. 
Let $T$ be a breadth first search tree in $G$
rooted at $x\in V(G)$ with layers $V_0,\ldots , V_r$.
Suppose $m \in {\mathbb N}$ is minimal such that 
$|\cup_{i=0}^{m+1} V_i| \leq \gamma |\cup_{i=0}^m V_i|$.  
Then $m \leq \frac {\log N}{\log \pow } = \log_{\pow }(N)$.
\end{prop}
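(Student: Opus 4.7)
The plan is to introduce the cumulative layer sizes $a_j := |\bigcup_{i=0}^j V_i|$ and show that the minimality in the definition of $m$ forces the sequence $(a_j)_{j=0}^m$ to grow by a factor strictly greater than $\gamma$ at each step, hence exponentially. Since the total number of vertices caps the sequence at $N$, this immediately bounds $m$.

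First I would record that $a_0 = |V_0| = |\{x\}| = 1$ and that the layers $V_i$ are pairwise disjoint subsets of $V(G)$, so the nondecreasing sequence $(a_j)$ is bounded above by $v(G) = N$. Next, by the minimality of $m$, for every index $j \in \{0, 1, \dots, m-1\}$ the inequality defining $m$ must fail, which gives $a_{j+1} > \gamma a_j$. Iterating this from $j = 0$ up to $j = m-1$ yields $a_m > \gamma^m a_0 = \gamma^m$. Combining with $a_m \le N$ gives $\gamma^m < N$, and taking logarithms to base $\gamma$ (legitimate since $\gamma > 1$) delivers $m < \log_\gamma N$, which is the stated bound.

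There is essentially no obstacle here. The only point worth confirming is that the $m$ appearing in the statement is well defined, but this is automatic: the BFS tree has finitely many layers, so once $V_i = \emptyset$ the cumulative sizes stabilise and the ratio $a_{j+1}/a_j$ drops to $1 \le \gamma$, forcing the minimum to exist. With that in hand, the short geometric-growth argument above completes the proof.
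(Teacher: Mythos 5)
Your argument is correct and is essentially identical to the paper's proof: both use the minimality of $m$ to get $|\bigcup_{i=0}^{j+1}V_i| > \gamma\,|\bigcup_{i=0}^{j}V_i|$ for each $j<m$, iterate to obtain $N \ge \gamma^m$, and take logarithms. The remarks on disjointness of layers and well-definedness of $m$ are fine but add nothing beyond the paper's one-line chain of inequalities.
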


\begin{proof} 
 	By definition of $m$ we have  
 	$N \geq |\bigcup _{i\in [m]}V_i| \geq 
	\pow |\bigcup _{i\in [m-1]}V_i| \geq \ldots \geq 
	\pow ^{m}|V_0| = \pow ^m$.
\end{proof}

Our first application is to finding
short cycles within an approximate range.

\begin{lem}  \label{lem: approx length cycle}
Let $G$ be an $N$-vertex graph with 
$d(G) \geq d = 	16\pow d_1$, where $\pow >1$ and $d_1 \geq 2$. 
Then $G$ contains an $\ell $-cycle for some 
$\ell \in \big [d_1, d_1 + {2\log _{\pow }(N)} \big ]$.
\end{lem}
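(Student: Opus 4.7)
First, I would apply Proposition~\ref{prop: moving to the k-core}(ii) to pass to a subgraph $G_1 \subseteq G$ with $\delta(G_1) \geq d(G)/2 = 8\gamma d_1$; since every cycle in $G_1$ is a cycle in $G$, it suffices to find the desired cycle in $G_1$.

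Next, I would run breadth-first search in $G_1$ from an arbitrary vertex $x$, producing layers $V_0 = \{x\}, V_1, \ldots, V_r$. Writing $U_k := V_0 \cup \cdots \cup V_k$, Proposition~\ref{prop: bfs partial decomposition} yields some $m \leq \log_\gamma N$ minimal with $|U_{m+1}| \leq \gamma |U_m|$.

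The key observation is that every vertex $v \in U_m$ has all of its $G_1$-neighbours at distance at most $m+1$ from $x$, hence inside $U_{m+1}$. Therefore, in the induced subgraph $H := G_1[U_{m+1}]$, every $v \in U_m$ satisfies $d_H(v) = d_{G_1}(v) \geq 8\gamma d_1$. A direct edge count gives
\[
d(H) \;=\; \frac{2 e(H)}{|U_{m+1}|} \;\geq\; \frac{|U_m| \cdot 8\gamma d_1}{|U_{m+1}|} \;\geq\; \frac{8\gamma d_1}{\gamma} \;=\; 8 d_1.
\]
A further application of Proposition~\ref{prop: moving to the k-core}(ii) then yields $H' \subseteq H$ with $\delta(H') \geq d(H)/2 \geq 4 d_1$.

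From here I would split into two cases. If $v(H') \leq 8 d_1$, then $\delta(H') \geq v(H')/2$, so by Bondy's theorem (Theorem~\ref{thm: bondy}) $H'$ is either pancyclic or complete bipartite. In the pancyclic case $H'$ contains $C_{d_1}$ directly; in the complete bipartite case $H' = K_{a,b}$ with $a,b \geq 4 d_1$, so $H'$ contains cycles of every even length up to $2\min(a,b) \geq 8 d_1$, giving either $C_{d_1}$ (if $d_1$ is even) or $C_{d_1+1}$ (if $d_1$ is odd). In either case the resulting length lies in $[d_1, d_1 + 1] \subseteq [d_1, d_1 + 2\log_\gamma N]$.

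If instead $v(H') > 8 d_1$, the plan is to iterate: rerun BFS inside $H'$ from a suitable root and reapply Proposition~\ref{prop: bfs partial decomposition} and the same edge-count argument to produce a still smaller, still dense subgraph $H'' \subseteq H'$, to which Bondy's theorem applies. Each BFS round has depth at most $\log_\gamma N$, so the tree paths built during the two rounds contribute an additive length of at most $2\log_\gamma N$ to the cycle eventually extracted, matching the upper endpoint of the target interval.

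The step I expect to be the main obstacle is precisely this last case $v(H') > 8 d_1$: a single round of BFS followed by Bondy is not sufficient, and one must iterate while carefully tracking that the cycle length produced in the inner round, once combined with the outer BFS tree-path segments, lies in $[d_1, d_1 + 2\log_\gamma N]$. The factor of $2$ in the slack $2\log_\gamma N$ is exactly what is needed to absorb the depths of the two BFS rounds.
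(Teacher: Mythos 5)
Your Case 1 is fine but essentially vacuous, and the real difficulty sits entirely in Case 2, which your sketch does not resolve. The proposed iteration fails for two concrete reasons. First, passing from $G_1$ to $H=G_1[U_{m+1}]$ need not shrink the graph at all: if $G_1$ has small diameter (say $2$), then $U_{m+1}$ can be all of $V(G_1)$, so repeating BFS inside $H'$ makes no progress, and there is no mechanism forcing you down to a subgraph on $O(d_1)$ vertices, which is what you need for Theorem \ref{thm: bondy} to produce a cycle of length close to $d_1$ (pancyclicity of a large dense graph only gives \emph{some} cycle of each length up to its order, which is useless here unless the order is comparable to $d_1$). Second, each round of the edge-count argument costs a factor of $\gamma$ in density (you went from $\delta(G_1)\ge 8\gamma d_1$ to $d(H)\ge 8d_1$), but the hypothesis $d\ge 16\gamma d_1$ only supplies one such factor, so a second round already drops you to average degree about $4d_1/\gamma$, which may be far below $d_1$; and with an unbounded number of rounds the claimed additive budget of $2\log_\gamma N$ for tree-path segments is also not justified (indeed it is not even clear how a cycle produced by Bondy inside a subgraph would be spliced with BFS tree paths at all).

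The paper avoids all of this by a different mechanism, whose ingredients are absent from your sketch: it first passes to a \emph{bipartite} subgraph $G'$ with $\delta(G')\ge d(G)/4$ (Proposition \ref{prop: moving to the k-core}(ii),(iii)), so that inside the BFS ball $\bigcup_{i\le m+1}V_i$ every edge joins consecutive layers; averaging then gives one pair $(V_i,V_{i+1})$ with $d(G'[V_i,V_{i+1}])\ge d_1$, and Theorem \ref{thm: erdos-gallai} yields a \emph{path} of length exactly $d_1$ there, which can be trimmed (losing at most two vertices) so both ends lie in $V_i$. The cycle is then closed through the BFS tree: the unique tree path between the two endpoints has length at most $2m\le 2\log_\gamma N$ and is internally disjoint from $V_i\cup V_{i+1}$, so the resulting cycle length lands in $[d_1,\,d_1+2\log_\gamma N]$. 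The points you would need to add to repair your approach are precisely these: a way to control the \emph{order} of the dense subgraph (or, as in the paper, to replace Bondy by Erd\H{o}s--Gallai, which gives a path of prescribed length regardless of the host's order), and an explicit disjointness/length argument for closing the path through the BFS tree within the stated slack.
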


\begin{proof} 
By Lemma \ref{prop: moving to the k-core} (ii) and (iii) 
there is a bipartite subgraph $G'$ of $G$ with $\delta (G') \geq d(G)/4$. 
Let $T$ be a breadth first search tree in $G'$
rooted at some $x\in V(G')$ with layers $V_0,\ldots , V_r$.
Let $m\leq {\log _{\pow }(N)}$ be as in 
Proposition \ref{prop: bfs partial decomposition}.
As $G'$ is bipartite, we have $G'[V_i] = \es $ for all $i\in [r]$, so 
\begin{equation*}
\sum _{i\in [0,m]}e(G'[V_i,V_{i+1}]) 
= e \big (G'[\cup_{i\in [0,m+1]} V_i] \big ) 
\geq \frac {\delta (G')}{2} \sum _{i\in [0,m]} |V_i|
\geq \sum _{i\in [0,m]} \frac {d(G)}{16\pow } 
 \big (|V_i| + |V_{i+1}| \big ),
\end{equation*}
using $\sum_{i\in [0,m]}(|V_i| + |V_{i+1}|)
\le (1+\gG) \sum_{i\in [0,m]} |V_i|
\le 2\gG \sum_{i\in [0,m]} |V_i|$.
Thus $d(G'[V_i,V_{i+1}]) \geq d(G)/16\pow \ge d_1$ 
for some $i\in [m]$. By Theorem \ref{thm: erdos-gallai},
there is a path of length $d_1$ in $G[V_i,V_{i+1}]$. 
By possibly removing vertices we can obtain an $xy$-path 
in $G[V_i,V_{i+1}]$ of length between $d_1-2$ and $d_1$ 
with $x,y \in V_i$. Combining this with the unique $xy$-path 
in $T$ of length at most $2m \leq 2\log _{\pow }(N)$ 
gives a cycle of length in
$\big [ d_1, d_1 + 2\log _{\pow }(N) \big ]$, as required.
\end{proof}

Our second application is to construct 
the following partial decomposition of a graph $G$,
consisting of a family of disjoint sets $X_i \sub V(G)$,
which are mutually non-adjacent in $G$, 
with each $X_i$ entirely at a fixed distance 
in some tree $T_i$ from the root $x_i$.

\begin{lem} \label{cor: bfs decomposition}
Let $\pow > 1$ and let $G$ be an $N$-vertex graph. 
Then there are triples $\{(x_i,X_i, T_i)\}_{i\in [t]}$,
where each $T_i$ is a subtree of $G$ rooted at $x_i$
and $X_i \sub V(T_i)$, such that:
\begin{enumerate}[(i)]
\item there is $d_i \in [0, \log _{\pow }(N)]$ such that
for all $x_i' \in X_i$ the unique $x_i x'_i$-path 
in $T_i$ has length $d_i$;
\item $\{X_i\}_{i\in [t]}$ are disjoint 
and satisfy $|\bigcup _{i\in [t]} X_i| \geq N/2\pow $;
\item there are no edges of $G$
between $X_i$ and $X_j$ for distinct $i,j\in [t]$.
\end{enumerate}
\end{lem}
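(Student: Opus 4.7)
The plan is to build the family $\{(x_i, X_i, T_i)\}$ greedily by repeatedly running BFS on a residual graph $G^{\ast}$, harvesting several pairwise non-adjacent layers from each BFS tree as separate triples, and discarding a ball around what was used before the next round.

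At each round, starting from $G^{\ast}=G$, I pick any $x\in V(G^{\ast})$ and run BFS in $G^{\ast}$ from $x$ with layers $V_0=\{x\},V_1,V_2,\ldots$; write $B_j=\bigcup_{i\le j}V_i$. Applying Proposition~\ref{prop: bfs partial decomposition} inside $G^{\ast}$ yields a minimal $m$ with $|B_{m+1}|\le \pow|B_m|$ and $m\le \log_{\pow}|V(G^{\ast})|\le \log_{\pow}N$; this is the depth bound demanded by (i). The key structural fact I will use is that any two BFS layers $V_j,V_{j'}$ with $|j-j'|\ge 2$ are non-adjacent in $G^{\ast}$, since every edge from $V_{j'}$ lies in $V_{j'-1}\cup V_{j'}\cup V_{j'+1}$. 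Hence the even-indexed layers $V_0,V_2,\ldots$ within $[0,m]$ and the odd-indexed layers $V_1,V_3,\ldots$ within $[0,m]$ form two families of pairwise non-adjacent subsets of $V(G^{\ast})$, and since they partition $B_m$, the family $\mathcal L$ with the larger total size satisfies $\sum_{V_j\in\mathcal L}|V_j|\ge |B_m|/2$. For every $V_j\in\mathcal L$ I emit a triple $(x,V_j,T_{\le j})$ with $T_{\le j}$ the BFS tree truncated to $B_j$; property (i) then holds with $d_i=j\le \log_{\pow}N$.

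After the round I delete $B_{m+1}$ from $G^{\ast}$ and iterate until $V(G^{\ast})=\emptyset$. The $X_i$'s are disjoint by construction. For (iii), inside a round the parity trick rules out $G$-edges between distinct $X_i$'s; across rounds, every $G$-neighbour of any harvested $V_j\subseteq B_m$ lies in $V_{j-1}\cup V_j\cup V_{j+1}\subseteq B_{m+1}$, which is discarded before the next round, so later $X_i$'s can neither coincide with nor send $G$-edges to any earlier one. For the size bound in (ii), each round adds at least $|B_m|/2$ to $\sum_i|X_i|$ while consuming $|B_{m+1}|\le \pow|B_m|$ vertices of $G^{\ast}$, a per-round ratio of at least $1/(2\pow)$; since the rounds together consume all $N$ vertices of $G$, summing gives $\sum_i|X_i|\ge N/(2\pow)$.

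I expect the parity trick to be the only delicate point. The naive alternative of taking only the terminal layer $V_m$ as $X$ yields a ratio $|V_m|/|B_{m+1}|$ of at best $(\pow-1)/\pow^2$, which matches $1/(2\pow)$ exactly at $\pow=2$ but falls short for $\pow<2$. Alternating layers of a single BFS are automatically non-adjacent in $G^{\ast}$, and using them as separate triples is exactly what restores the clean $1/(2\pow)$ bound uniformly in $\pow>1$.
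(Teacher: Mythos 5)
Your proof is correct and is essentially the paper's argument: the paper also takes the larger of the even- or odd-indexed BFS layers up to the threshold $m$ from Proposition \ref{prop: bfs partial decomposition} as the harvested sets, removes their closed neighbourhood (which coincides with your $B_{m+1}$), and recurses, with the same $1/2\pow$ accounting. The only differences are cosmetic (explicit iteration with a per-round ratio sum versus induction, and phrasing the cross-round non-adjacency via $G$-neighbours where strictly one should speak of neighbours inside the current residual graph, which is what your argument actually uses).
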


\begin{proof}
We prove the statement by induction on $v(G)$,
noting that it is trivial if $v(G) =1$.\np

Let $T$ be a breadth first search tree in $G$
rooted at some $x\in V(G)$ with layers $V_0,\ldots , V_r$.
Let $m\leq {\log _{\pow }(N)}$ be as in 
Proposition \ref{prop: bfs partial decomposition}.
Let $\{X_i\}_{i\in [s]}$ be $\{V_{2i}\}_{2i \in [m]}$ or 
$\{V_{2i+1}\}_{2i+1 \in [m]}$ according to which set 
$\bigcup _{2i \in [m]} V_{2i}$ or 
$\bigcup _{2i+1 \in [m]} V_{2i+1}$ is larger. 
Setting $X := \bigcup_{i\in [s]}X_i$, 
we note that $|X| + |N_G(X)| = 
|X \cup N_G(X)| \leq |\bigcup _{i\in [m+1]} V_i| 
\leq \pow |\bigcup _{i\in [m]} V_i| \leq 2\pow |X|$. \np 

For each $i\in [s]$, set $x_i = x$, $T_i = T$ 
and $d_i = j$, where $X_i = V_j$,
so that (i) holds by the definition of $V_j$.
As $\{X_i\}_{i\in [s]}$ are non-consecutive layers of 
a breadth first search tree, they are disjoint and there are 
no edges between $X_i$ and $X_j$ for distinct $i,j\in [s]$. \np

Now let $W = V(G) \sm (X \cup N_G(X))$ 
and apply induction on $G[W]$ to obtain 
$\{(x_i,X_i, T_i)\}_{i\in [s+1,t]}$. 
We claim that $\{(x_i,X_i, T_i)\}_{i\in [t]}$ 
satisfy the statement of the lemma.
Indeed, (i) holds by construction.
For (ii), disjointness is clear, 
and we have $\sum _{i\in [s]} |X_i| = 
|X| \geq {|X \cup N_G(X)|}/{2\pow }$
and $\sum _{i\in [s+1,t]} |X_i| \geq {|W|}/{2\pow }$ by induction.
Finally, (iii) holds by construction 
and as each $X_j$ with $j\in [s+1,t]$
is contained in $W$, which is disjoint from $X \cup N_G(X)$.
\end{proof}

\section{Quite dense subgraphs} \label{sec:dense}

In this section we take our first steps towards
the stability result described above,
by showing that any supposed counterexample
to Theorem \ref{thm: clique-vs-cycle} can be
partitioned almost entirely into vertex-disjoint subgraphs, 
each of which is quite large (has $\ell ^{1-o(1)}$ vertices) 
and is quite dense (has $\ell ^{2-o(1)}$ edges). 
We start by showing that a graph of large minimum degree
has a long path or a dense subgraph.

\begin{lem}
\label{lem: long cycle or dense subgraph}
Fix $D \in \mb{N}$. Then any graph $G$ either
\begin{enumerate}[(i)]
\item contains paths of length 
at least $D$ starting at any given vertex, or
\item has a subgraph $H$ with $v(H) \leq D$ 
and $e(H) \geq \binom {\dD(G)+1}{2}$.
\end{enumerate}  
\end{lem}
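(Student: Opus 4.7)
The plan is to prove the contrapositive: assume (i) fails at some vertex $v$ (so every path in $G$ starting at $v$ has length at most $D-1$), and exhibit the required subgraph on $\leq D$ vertices. The natural candidate is $H := G[V(P)]$, where $P = v_0 v_1 \ldots v_k$ is a longest path in $G$ with $v_0 = v$; by assumption $k \leq D-1$, so $v(H) = k+1 \leq D$ automatically, and only the edge bound $e(H) \geq \binom{\dD(G)+1}{2}$ requires work.

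To bound $e(H)$, I plan to locate a set $T \sub V(P)$ of at least $\dD(G)$ vertices all of whose $G$-neighbours lie in $V(P)$. Maximality of $P$ immediately gives $N_G(v_k) \sub V(P) \sm \{v_k\}$, and in particular $d_G(v_k) \le k$. For each $v_i \in N_G(v_k)$ with $i \leq k-2$, the standard rotation produces the path $v_0 \ldots v_i v_k v_{k-1} \ldots v_{i+1}$, which is a longest path from $v_0$ on the same vertex set, so its endpoint $v_{i+1}$ also has all $G$-neighbours in $V(P)$. Taking $T$ to consist of $v_k$ together with these endpoints, the only rotation ruled out corresponds to $v_{k-1} \in N_G(v_k)$ (the path edge), and the resulting vertices are pairwise distinct (at positions $k$ and $i+1 \leq k-1$), so $|T| = d_G(v_k) \geq \dD(G)$.

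The final step is an edge-count. Write $\dD := \dD(G)$ and note $d_H(u) = d_G(u) \geq \dD$ for all $u \in T$. Discarding the nonnegative term $e(H[V(P)\sm T])$ and rewriting,
\begin{equation*}
e(H) \geq e(H[T]) + e_H(T, V(P) \sm T) = \tfrac{1}{2}\sum_{u\in T} d_H(u) + \tfrac{1}{2} e_H(T, V(P) \sm T) \geq \tfrac{\dD|T|}{2} + \tfrac{e_H(T, V(P) \sm T)}{2}.
\end{equation*}
When $|T| \geq \dD + 1$ the first term alone gives $\binom{\dD+1}{2}$. In the tight case $|T| = \dD$, the trivial bound $e(H[T]) \leq \binom{\dD}{2}$ together with $2e(H[T]) + e_H(T, V(P) \sm T) \geq \dD^2$ forces $e_H(T, V(P) \sm T) \geq \dD$, which supplies exactly the missing $\dD/2$ and completes the bound.

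The only real subtlety is the boundary case $|T| = \dD$, where the naive degree-sum count is short of $\binom{\dD+1}{2}$ by $\dD/2$ and one has to use the cap on $e(H[T])$ to push the missing edges out to $V(P) \sm T$. I expect no need for iterated rotations or more delicate P\'osa-style endpoint expansion; a single round of rotations at the far endpoint of $P$ already supplies enough structure to close the edge count.
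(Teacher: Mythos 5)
Your proof is correct, and it reaches the edge bound by a genuinely different mechanism than the paper. Both arguments share the same skeleton (pass to the contrapositive, take a path $P$ from the bad vertex, and set $H=G[V(P)]$, so $v(H)\le D$ is automatic), but the paper does not use a longest path or rotations at all: it builds $P$ greedily, at each step choosing the next vertex to maximise its number of back-neighbours on the path so far. Then the terminal vertex $x_\ell$ has all $s\ge\dD(G)$ of its neighbours $x_{i_1},\dots,x_{i_s}$ on $P$, and since $x_\ell$ was an available candidate at step $i_j+1$, the chosen vertex $x_{i_j+1}$ has at least $j$ back-neighbours; summing gives $e(H)\ge 1+2+\dots+s=\binom{s+1}{2}$ directly, with no case analysis. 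Your route — one round of P\'osa rotations at the far endpoint of a longest path from $v$, producing $d_G(v_k)\ge\dD(G)$ vertices whose entire $G$-neighbourhood lies in $V(P)$, followed by the degree-sum count — is also sound: the rotated paths are indeed longest paths from $v$ on the same vertex set, the endpoints $v_k$ and $v_{i+1}$ ($i\le k-2$) are pairwise distinct, the identity $\sum_{u\in T}d_H(u)=2e(H[T])+e_H(T,V(P)\sm T)$ is used correctly, and your treatment of the tight case $|T|=\dD(G)$ (using $e(H[T])\le\binom{\dD(G)}{2}$ to force $\dD(G)$ edges leaving $T$) closes the gap exactly. The trade-off is that the paper's greedy potential argument yields the triangular count in one stroke and does not even require $P$ to be a longest path, whereas your argument uses only the standard longest-path/rotation toolkit but pays for it with the boundary-case bookkeeping; in the non-tight case your bound $\dD(G)|T|/2$ is also somewhat weaker than the paper's $\binom{s+1}{2}$, though both comfortably meet the stated inequality.
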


\begin{proof}
Suppose that (i) fails, i.e.\ there is $x_0 \in V(G)$ 
such that any path starting at $x_0$ has length less than $D$.
We must show that (ii) holds. 
We construct a path $P$ starting at $x_0$ as follows.
At step $i \ge 0$, having chosen a path $P_{i-1}=x_{0} \ldots x_{i-1}$,
we select $x_i \in N_G(x_{i-1}) \sm \{x_0,\ldots ,x_{i-1}\}$ 
that maximises $|N_{G}(x_i) \cap \{x_0,\ldots , x_{i-1}\}|$.
If no such $x_i$ exists we terminate with $P=P_{i-1}$.
Let $P = x_0x_1\cdots x_{\ell }$ be the final path,
where by choice of $x_0$ we have $\ell < D$. 
By the termination rule, we have $N_G(x_{\ell }) \subset V(P)$.
Let $N_G(x_{\ell }) = \{ x_{i_1},\ldots ,x_{i_s} \}$,
where $s \ge \dD(G)$, ordered so that $i_1 < \ldots < i_s$. 
As $x_{\ell }$ is adjacent to $x_{i_j}$ for each $j\in [s]$, 
the rule for choosing $x_{i_j}$ guarantees 
$|N_{G}(x_{{i_j}+1}) \cap \{x_0,\ldots , x_{{i_j}}\}| 
\geq |N_{G}(x_{\ell }) \cap \{x_0,\ldots , x_{{i_j}}\}| = j$ 
for each $j\in [s]$. Then $H = G[V(P)]$ satisfies $v(H) \leq D$ 
and $e(H) \geq \sum _{j=1}^s j \ge \tbinom{\dD(G)+1}{2}$.
\end{proof}

\noi \textit{Remark:} 
An unpublished result of the second author in \cite{L-thesis} 
used a variant of Lemma \ref{lem: long cycle or dense subgraph} 
to prove that subgraphs of the cube graph with average degree $d$ 
contain paths and cycles of length at least~$2^{\Omega (\sqrt d)}$. 
This result was later improved to~$2^{\Omega (d)}$ in \cite{L-cube} 
via a~different approach. \np

We combine the previous lemma with two applications 
of the breadth first search
decomposition of the previous section to show that
any $C_\ell$-free graph with small independence number
contains a~small dense subgraph.

\begin{lem}
\label{lem: indep bound for cycle or dense subgraph}
Let $N,D,\ell \in {\mathbb N}$, $\pow >1$,  
where $3\log _{\pow }(N) \le \ell \le D$, and $d \geq 8 \gamma ^2$. 
Suppose $G$ is a~$C_\ell$-free graph 
on $N$ vertices with $\aA(G) \leq N/d$. 
Then $G$ has a subgraph $H$ with $v(H) \le D$
and $e(H) \ge {d^2}/{2^9 \pow ^4}$.
\end{lem}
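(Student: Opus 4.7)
The plan is to combine the BFS decomposition of Lemma~\ref{cor: bfs decomposition} with the path-vs-density dichotomy of Lemma~\ref{lem: long cycle or dense subgraph}, using $C_{\ell}$-freeness of $G$ to rule out the path alternative.

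First I would apply Lemma~\ref{cor: bfs decomposition} to $G$ with parameter $\gG$, obtaining triples $\{(x_i,X_i,T_i)\}_{i\in[t]}$ with $\bigl|\bigcup_iX_i\bigr|\ge N/(2\gG)$, no edges between distinct $X_i$'s, and each $X_i$ lying at a common depth $d_i\le\log_{\gG}N$ in $T_i$. Since the $X_i$'s are mutually non-adjacent, $\sum_i\aA(G[X_i])\le\aA(G)\le N/d$, while Tur\'an's theorem gives $\aA(G[X_i])\ge|X_i|/(d(G[X_i])+1)$. Combining these with $\sum_i|X_i|\ge N/(2\gG)$ via a weighted average forces some index $i_0$ with $d(G[X_{i_0}])\ge d/(2\gG)-1\ge d/(4\gG)$, using $d\ge 8\gG^2$. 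Writing $Y:=X_{i_0}$, $T:=T_{i_0}$, $d_Y:=d_{i_0}$ and applying Proposition~\ref{prop: moving to the k-core}(ii) produces $G'\subseteq G[Y]$ with $\dD(G')\ge d(G[Y])/2\ge d/(8\gG)$.

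Now I would apply Lemma~\ref{lem: long cycle or dense subgraph} to $G'$ with parameter $D$. In the dense-subgraph case one obtains $H\subseteq G'$ with $v(H)\le D$ and $e(H)\ge\binom{\dD(G')+1}{2}\ge d^2/(128\gG^2)\ge d^2/(2^9\gG^4)$ (the last inequality since $\gG\ge 1$), as required. In the remaining case $G'$ contains paths of length $\ge D\ge\ell$ starting from every vertex, and the claim is that this contradicts $C_{\ell}$-freeness of $G$. Given such a path $P=u_0u_1\dots u_D$ in $G'$ (all vertices lying in the BFS layer $Y$) and any $k\in[1,D]$, the unique $u_0u_k$-path in $T$ has even length $L_k\in\{2,4,\dots,2d_Y\}$, and concatenating with the initial $k$-segment of $P$ produces a genuine cycle in $G$ of length $k+L_k$: its tree-path interior lies at depths strictly less than $d_Y$ and is therefore disjoint from $Y$. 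The goal is then to realise $k+L_k=\ell$ for some $k$ and some choice of $P$, producing the forbidden $C_{\ell}$.

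The main obstacle is this final realisation step. Writing $L_k=2m_k$, where $d_Y-m_k$ is the depth of the lowest common ancestor of $u_0$ and $u_k$ in $T$, the relevant $k$'s are those with $k\equiv\ell\pmod 2$ and $k\in[\ell-2d_Y,\ell-2]$; the hypothesis $\ell\ge 3\log_{\gG}N\ge 3d_Y$ ensures this range is substantial. The plan is to classify $Y$ by LCA-depth relative to $u_0$ into branches, and to exploit the minimum-degree bound $\dD(G')\ge d/(8\gG)\ge\gG$ (which follows from $d\ge 8\gG^2$) to argue that, for a suitable choice of $u_0$, some path in $G'$ of an admissible length $k$ can be forced to land in the prescribed branch, yielding $m_k=(\ell-k)/2$ as needed. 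This branch-targeting argument, rather than the preceding decomposition bookkeeping, is where the real work lies.
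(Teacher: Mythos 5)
Your reduction to a minimum-degree subgraph $G'$ living inside a single BFS layer is fine (and in fact your averaging over the mutually non-adjacent sets $X_i$ gives a slightly better constant than needed), and the dense alternative of Lemma~\ref{lem: long cycle or dense subgraph} does finish the proof in that case. The genuine gap is exactly where you say ``the real work lies'': in the path alternative you only obtain cycles of lengths $k+L_k$ where $L_k=2m_k$ is twice the distance from the layer to the lowest common ancestor of $u_0$ and $u_k$ in the single tree $T$, and you have no control over $m_k$. Nothing forces any $k$ with $k+2m_k=\ell$: the path of length $D$ may, for instance, stay entirely inside the subtree below one fixed ancestor of $u_0$, or the values $k+2m_k$ may simply skip $\ell$ as $k$ varies. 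The proposed ``branch-targeting'' step, using $\dD(G')\ge d/(8\gG)$ and a good choice of $u_0$ to steer the path's $k$-th vertex into a branch with prescribed LCA-depth, is not an argument as it stands --- a path chosen inside $G'$ cannot be forced to leave a branch, since all of $G'$ could lie in one branch --- and I do not see how to complete it without a substantially new idea.

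The paper avoids this problem by applying the decomposition of Lemma~\ref{cor: bfs decomposition} \emph{twice}: once to $G$ (giving layers $X_i$ in trees $T_i$) and then again to $G[X]$ where $X=\bigcup_i X_i$ (giving layers $Y_j$ in trees $T_j'$). A long path produced by Lemma~\ref{lem: long cycle or dense subgraph} then lies inside a single second-level layer $Y_j$, and the crucial point is that \emph{every} vertex of $Y_j$ is at the same distance $d_j'$ from the root $y_j$ in $T_j'$; so the tree-connection from $y_j$ to whichever vertex $z_{\ell_2}$ of the path one chooses has a \emph{fixed} length $d_j'$, independent of the truncation point. Together with the fixed-length connection $P_1$ between $y_j$ and $z_0$ inside the first-level tree $T_i$ (of length $\ell_1\le 2\log_\pow N$, internally disjoint from $X_i$), the cycle length is $\ell_1+d_j'+\ell_2$ with $\ell_1,d_j'$ fixed and the truncation length $\ell_2$ freely adjustable in $[0,D]$, so one can hit $\ell$ exactly; this is also why the hypothesis is $\ell\ge 3\log_\pow N$ (to absorb $\ell_1+d_j'\le 3\log_\pow N$). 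Your single-decomposition version has no analogue of this ``constant-distance-to-root'' reservoir, which is precisely the missing ingredient.
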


\begin{proof}
Let $\{(x_i,X_i,T_i)\}_{i\in [t]}$ be obtained by
applying Lemma \ref{cor: bfs decomposition} to $G$.
Let $X = \bigcup _{i\in [t]} X_i$,
and note that $|X| \geq {N}/{2\pow }$.
Let $\{(y_i,Y_i,T_i')\}_{i\in [s]}$ be obtained by
applying Lemma \ref{cor: bfs decomposition} again,
this time to $G[X]$. 
Let $Y = \bigcup _{i\in [s]} Y_i$, and note that
$|Y| \geq {|X|}/{2 \pow } \geq {N}/{4\pow ^2} 
\geq {d\aA (G)}/4 \pow ^2$. 
By Theorem \ref{thm: Turan} (Tur\'an's Theorem),
$d(G[Y]) \geq {d}/4\pow ^2 - 1 \geq d / 8 \pow ^2$, 
as $d \geq 8\gamma ^2$. 
Then Proposition \ref{prop: moving to the k-core} (ii)
applied to $G[Y]$ gives some $G'=G[Y']$ with $Y' \subset Y$ 
such that $\dD(G') \geq {d}/{16 \pow ^2}$.
By Lemma \ref{lem: long cycle or dense subgraph},
to complete the proof of the lemma, it suffices
to show that $G'$ does not contain a path of length $D$.\np
 
For contradiction, suppose $P = z_0z_1\ldots z_D$ is 
a path in $G'$. As $z_0 \in Y$ there is a triple 
$(y_j, Y_j, T_j')$ with $z_0 \in Y_j$. 
As $T_j'$ is a tree, and so a connected subgraph of $G[X]$, 
by Lemma \ref{cor: bfs decomposition} (iii)
there is a triple $(x_i, X_i, T_i)$ with $V(T_j') \subset X_i$,
and by (i) there is $d_i \in [0, {\log _{\pow }(N)}]$ 
so that every vertex in $X_i$ is at distance $d_i$
from $x_i$ in $T_i$. In particular, the $x_i y_j$-path
and $x_i z_0$-path in $T_i$ only intersect $X_i$
in $y_j$ and $z_0$. We let $P_1$ be the $y_jz_0$-path in $T_i$.
Then $P_1$ has length $\ell_1 \le 2 \log _{\pow }(N)$
and intersects $X_i$ only in $y_j$ and $z_0$.\np

We now use the triple $(y_j, Y_j, T_j')$.
As $P$ is a connected subgraph of $G[Y]$, 
by Lemma \ref{cor: bfs decomposition} (iii) 
we have $V(P) \subset Y_j$, and by (i)
there is $d_j' \in [0, \log _{\pow }(N)]$ so that 
every vertex of $Y_j$ is at distance
$d'_j$ from $y_j$ in $T_j'$.
Let $\ell _2 = \ell - \ell _1 - d_j'$ and consider 
the subpath $P_2 = z_0 z_1 \ldots z_{\ell _2}$ of $P$.
Let $P_3$ be the $y_j z_{\ell _2}$-path in $T_j'$.
Then $P_3$ has length $d'_j$
and intersects $Y_j$ only in $z_{\ell _2}$.
As $V(P_3) \sub V(T_j') \sub X_i$,
we can combine $P_1, P_2$ and $P_3$ 
to form a cycle of length $\ell$.
This contradiction completes the proof.
\end{proof}
	
By iterating the previous lemma one can obtain
the following approximate decomposition of the vertex set of $G$.
This Corollary will not be used in the proof of 
Theorem \ref{thm: clique-vs-cycle} so we omit its proof, which is similar 
to that of Corollary \ref{cor: decomp into hubs} in the next section.

\begin{cor} \label{cor: dense decomp}
Given $\eps > 0$ there is $C\ge 1$ so that the 
following holds for all $\ell , n \in {\mathbb N}$ with 
 $n\geq 3$  and $\ell  \geq C\frac {\log n}{\log \log n}$.
Suppose $G$ is a $C_{\ell }$-free graph on 
$N = (\ell -1)(n-1)+1$ vertices with $\aA (G) \leq n-1$. 
Then there is a partition 
$V(G) = W \cup \bigcup _{i\in [L]} V_i$ 
so that $|V_i| < \ell$ and $e(G[V_i]) > \ell^{2-\eps}$
for all $i\in [L]$, and $|W| \leq \eps N$.
\end{cor}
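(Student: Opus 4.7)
The plan is to iterate Lemma \ref{lem: indep bound for cycle or dense subgraph}, greedily extracting one dense piece of size less than $\ell$ at a time from $G$ until fewer than $\eps N$ vertices remain. We fix three parameters: $D = \ell - 1$ (so that each extracted piece has fewer than $\ell$ vertices), $d = \eps(\ell - 1)$ (so that any subgraph $G'$ of $G$ with $v(G') \geq \eps N$ still satisfies $\alpha(G') \leq n-1 \leq v(G')/d$), and $\gamma = \ell^{\eps/10}$. With these, Lemma \ref{lem: indep bound for cycle or dense subgraph} will produce a subgraph $H$ with $v(H) \leq \ell - 1$ and
\[
e(H) \geq \frac{d^2}{2^9 \gamma^4} = \frac{\eps^2 \ell^{2 - 2\eps/5}}{2^9} \geq \ell^{2-\eps},
\]
provided $\ell$ is large enough in terms of $\eps$.

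To verify the hypotheses of Lemma \ref{lem: indep bound for cycle or dense subgraph} at every iteration, note that $d \geq 8\gamma^2$ reduces to $\eps(\ell-1) \geq 8\ell^{\eps/5}$ (valid for $\ell$ large in $\eps$), and $3\log_{\gamma} v(G') \leq \ell$ follows from $\log N \leq 2\log n$ and $\log \gamma = (\eps/10)\log \ell \geq (\eps/20)\log \log n$ (for $n$ large), which reduces to roughly $60 \log n / (\eps \log \log n) \leq \ell$. This last inequality is guaranteed precisely by our assumption $\ell \geq C \log n / \log \log n$ once $C$ is sufficiently large in terms of $\eps$.

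The iteration itself is then routine. Set $G_0 = G$; inductively, while $v(G_i) \geq \eps N$, apply Lemma \ref{lem: indep bound for cycle or dense subgraph} to $G_i$ (which remains $C_\ell$-free as a subgraph of $G$) to obtain a vertex set $V_{i+1} \subseteq V(G_i)$ with $|V_{i+1}| < \ell$ and $e(G[V_{i+1}]) \geq \ell^{2-\eps}$, then set $G_{i+1} = G_i - V_{i+1}$. The process terminates at some step $L$ with $v(G_L) < \eps N$; setting $W := V(G_L)$ yields the required partition $V(G) = W \cup \bigcup_{i \in [L]} V_i$.

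The principal (and only) difficulty is the balancing of $\gamma$: it must be small enough that $\gamma^2 \leq d/8$ and that $\gamma^4$ is dominated by $\ell^\eps$ (to guarantee the density lower bound $\ell^{2-\eps}$), yet large enough that the BFS-depth condition $3\log_\gamma N \leq \ell$ can hold. This tension is exactly what forces the quantitative dependence $\ell \geq C\log n /\log \log n$ with $C$ large in terms of $\eps$, matching the threshold of Theorem \ref{thm: clique-vs-cycle}.
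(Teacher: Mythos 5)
Your overall strategy---repeatedly apply Lemma \ref{lem: indep bound for cycle or dense subgraph} to the not-yet-covered part until fewer than $\eps N$ vertices remain---is exactly the argument the paper intends: the proof is omitted there, but it is stated to be similar to that of Corollary \ref{cor: decomp into hubs}, which applies Lemma \ref{lem: indep bound for cycle or dense subgraph} with $\pow=\ell^{\bB}$, $D=\ell$, $d=\ell^{1-\bB}$ and iterates in just this way. Two details of your write-up need repair. First, you take $D=\ell-1$, but Lemma \ref{lem: indep bound for cycle or dense subgraph} has the hypothesis $3\log_{\pow}(N)\le\ell\le D$, so as stated it does not license this choice (the condition $\ell\le D$ is what guarantees, in its proof, that the subpath $P_2$ of length $\ell_2\le\ell$ fits inside the path of length $D$). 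The clean fix is to apply the lemma with $D=\ell$, obtaining $v(H)\le\ell$, and then recover the strict bound $|V_i|<\ell$ using the slack in your edge count: if $v(H)=\ell$, delete one vertex, losing at most $\ell-1$ edges, which is harmless since $\eps^2\ell^{2-2\eps/5}/2^9\ge 2\ell^{2-\eps}$ for $\ell$ large in terms of $\eps$. (One can check that the lemma's proof would in fact tolerate $D=\ell-1$ because the relevant BFS layer has depth $d_j'\ge 1$, but that is a re-proof of the lemma, not an application of it.) Second, your verification of $3\log_{\pow}(v(G'))\le\ell$ uses $\log N\le 2\log n$, which fails in the regime $\ell\gg n$ that the corollary permits; the correct bound is $\log N\le\log\ell+\log n+O(1)$, and the desired inequality still holds because after dividing by $\log\pow=(\eps/10)\log\ell$ the $\log\ell$ term contributes only $O(1/\eps)\le\ell$, while the $\log n$ term is handled exactly as you say via $\ell\ge C\frac{\log n}{\log\log n}$ with $C$ large in terms of $\eps$. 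With these two corrections your proof is complete and coincides with the paper's intended argument.
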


\section{Hubs} \label{sec:hub}	

Continuing our progress towards the stability result,
we next upgrade the properties of our decomposition
by showing that the quite dense pieces from the last
section must contain quite large `hubs',
which have the property that any small set of vertices 
can be joined together via disjoint paths of 
essentially any desired lengths.
The precise definition is as follows.

\begin{dfn} \label{def:hub}
Let $G$ be a graph and $A,B \subset V(G)$ be disjoint sets. 
Given distinct $x,y \in A \cup B$, we call $\ell \in \mb{N}$ 
a \emph{bipartite length for} $\{x,y\}$ \emph{in} $G[A,B]$ 
if (a) $\ell $ is even and $\{x,y\} \subset A$ or $\{x,y\} \subset B$, 
or (b) $\ell $ is odd and $|\{x,y\} \cap A| = |\{x,y\}\cap B| = 1$.\np	
	
For $\eps\in(0,1)$ and $u \in \mb{N}$, we call a triple $(A,B,D)$ an 
$(u, \eps )$-hub in a graph $G$ if $|A|=|B| = u$, $|D| \leq \eps u$, 
and for any distinct $s_1,\ldots ,s_m, t_1,\ldots , t_m$ in $A \cup B$
with $m \le u^{1-\eps }$ we have the following connection property:
for any $\ell _1, \ldots ,\ell _m \geq 2$ with 
$\sum _{i\in [m]} (\ell _i + 1) \leq 2(1-\eps )u$, where each 
$\ell _i$ is a~bipartite length for $\{s_i,t_i\}$ in $G[A,B]$, 
there are vertex-disjoint paths $P_1,\ldots, P_m$ in $G[A \cup B \cup D]$, 
where each $P_i$ is an $s_it_i$-path of length $\ell _i$. 
\end{dfn}

The main lemma of this section
shows that quite dense graphs contain large hubs.

\begin{lem} \label{lem: locating hubs in G}
Given $\eps \in (0,1)$ there is $\delta >0$ so that for $N \geq N_0(\eps )$ 
and any integer $u \in [N^{\eps }, N^{1-\eps }]$, every $N$-vertex 
graph $G$ with $d(G) \geq N^{1-\delta }$ contains a $(u,\eps )$-hub.	
\end{lem}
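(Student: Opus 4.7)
The plan is to use Dependent Random Choice (Theorem \ref{thm: DRC}) to extract a bipartite pseudo-random substructure of $G$, and then verify the hub connectivity property by a greedy path-construction inside this substructure. Choose $\eps_1 \ll \eps$ and $\delta \ll \eps_1$. Since $d(G) \geq N^{1-\delta}$ gives $e(G) \geq N^{2-\delta}/2 \gg N^{2-\eps_1}$, Theorem \ref{thm: DRC} applied with parameter $\eps_1$ yields disjoint $U_1, U_2 \subset V(G)$ (of polynomial size, in particular $|U_i| \geq N^{1-\eps_1}$) such that every pair $a,a' \in U_i$ satisfies $|N_G(a) \cap N_G(a') \cap U_{3-i}| \geq N^{1-\eps_1}$.

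Next, sample $A \subset U_1$ and $B \subset U_2$ uniformly at random with $|A|=|B|=u$, and set $D = \es$. For any fixed pair $a,a' \in U_1$, the expected size of $N_G(a) \cap N_G(a') \cap B$ is at least $u \cdot N^{-\eps_1}$, which exceeds $u^{1-\eps_2}$ for $\eps_2 := 2\eps_1/\eps$, using $u \geq N^{\eps}$. A Chernoff bound plus a union bound over the $\binom{u}{2}$ pairs inside $A$ and inside $B$ then shows that with positive probability, every pair of vertices on one side has common neighbourhood of size at least $u^{1-\eps_2}$ on the other side. Fix such a choice of $A, B$.

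The crux is verifying the hub connectivity property; here I take the $s_it_i$-paths to lie inside the bipartite subgraph $G[A,B]$, which makes bipartite parity automatic. I construct the paths $P_1,\ldots,P_m$ sequentially. When constructing $P_i$, the vertices forbidden for use are the interior vertices of $P_1,\ldots,P_{i-1}$ together with the at most $2m \leq 2u^{1-\eps}$ reserved future endpoints $\{s_j,t_j\}_{j>i}$; in total this forbidden set has size at most $2(1-\eps)u + 2u^{1-\eps}$, and by the balanced nature of bipartite alternation, standard bookkeeping shows $\Theta(\eps u)$ vertices remain unused on each of the two sides. To build $P_i$ of exact length $\ell_i$, I grow partial paths from both $s_i$ and $t_i$ simultaneously, at each step selecting the next intermediate vertex from the relevant codegree set (of size $\geq u^{1-\eps_2}$), and finally merging the two partial paths through a single common neighbour. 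Because $u^{1-\eps_2} \gg$ the number of forbidden vertices, each greedy step succeeds, and the parity constraint is met automatically by the bipartite structure of $G[A,B]$.

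The main obstacle is this last verification: maintaining through every greedy extension that the current codegree set still contains enough unused vertices to continue, and that the merge step matches the exact prescribed length $\ell_i$ while respecting parity. The hypothesis $m \leq u^{1-\eps}$ is crucial here to keep the count of protected future endpoints negligible against the codegree sets of size $u^{1-\eps_2}$, ensuring the greedy procedure closes without a Hall-type obstruction.
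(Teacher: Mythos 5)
Your overall starting point (DRC to get two sets with large pairwise codegrees) matches the paper, but the way you verify the connection property has a genuine gap, and it is exactly at the step you flag as the ``main obstacle''. The hub property must produce vertex-disjoint paths whose total length $\sum_i(\ell_i+1)$ can be as large as $2(1-\eps)u$, i.e.\ the paths may have to cover all but an $\eps$-fraction of $A\cup B$. In your construction the only guarantee inside the sampled pair $(A,B)$ is that every pair on one side has at least $u^{1-\eps_2}$ common neighbours on the other side. When you build the paths greedily, the set of vertices already used by earlier paths (their \emph{interiors}, not just the $2m$ reserved endpoints) can have size $\Theta(u)$, which is far larger than $u^{1-\eps_2}$. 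So at a late greedy step the entire codegree set of the two current path-ends may be contained in the used set, and the extension/merge step fails. Your assertion ``$u^{1-\eps_2}\gg$ the number of forbidden vertices'' is simply false in this regime, and knowing that $\Theta(\eps u)$ vertices remain unused globally does not help, because a sublinear codegree set need not meet that unused set. Taking $D=\es$ makes this unavoidable: the connector vertices must then compete for space with the path vertices themselves.

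The paper's proof is organised precisely to dodge this. It first passes to a bipartite subgraph and applies DRC so that the codegree guarantee $N^{1-\delta_1}$ holds inside the \emph{large} sets $U_1,U_2$, where it dwarfs $2u$; this allows a greedy construction of an alternating cycle $C$ of length $2u$, and $A,B$ are \emph{defined} as the two sides of $V(C)$. The long stretches of each $P_i$ are then taken as arcs of $C$ (structural adjacency along the cycle, no codegree needed), while a separate random reservoir $D\subset(U_1\cup U_2)\sm V(C)$ with $|D|\le\eps u$ supplies only $O(1)$ connector vertices per path. Since at most $4m\le u^{1-\eps/2}/2$ vertices of $D$ are ever consumed and every pair in $A$ or in $B$ has at least $u^{1-\eps/2}$ common neighbours in $D$, the connector choices never run out. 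To repair your argument you would need some analogue of this: a near-spanning structure on $A\cup B$ built while the codegree guarantee still exceeds the number of vertices being consumed, plus a reservoir disjoint from $A\cup B$ for the connections; a purely greedy construction inside the sampled bipartite graph with codegrees $u^{1-\eps_2}$ cannot reach total path length $\Theta(u)$.
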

	
\begin{proof}
We assume throughout the proof that
$\dD$ is sufficiently small and $N$ is sufficiently large.
By Proposition \ref{prop: moving to the k-core} (iii)
we may assume $G$ is bipartite. Let $\dD_1 = \eps^2/10$.
By Theorem \ref{thm: DRC}, applied with $\dD_1$ in place of $\eps$,
there are disjoint $U_1, U_2 \subset V(G)$ so that
$|N_G(a, U_{3-i}) \cap N_G(a', U_{3-i})| \geq N^{1-\dD_1}$ 
for every $a,a' \in U_i$ with $i \in [2]$. 
As $G$ is bipartite, $U_1$ and $U_2$ must
lie on opposite sides of the bipartition. 
We construct an alternating cycle $C$ of length $2u$ 
in $G[U_1,U_2]$ by fixing distinct vertices $a_1,\ldots ,a_u \in U_1$
and greedily selecting a common neighbour in $U_2$ of each
consecutive pair $\{a_i,a_{i+1}\}$ (including $\{a_u,a_1\}$) 
so that all selected vertices are distinct. 
This is possible as $u \leq N^{1-\eps } \ll N^{1-\delta _1}$.
We let $A = V(C) \cap U_1 = \{a_1,\ldots,a_u\}$
and $B = V(C) \cap U_2$. \np 

We let $D$ be a random subset of $(U_1 \cup U_2) \sm (A \cup B)$ 
where each element is included independently with probability 
$p = \eps u / 2 N$. By Markov's inequality, 
$|D| \leq 2pN \leq \eps u$ with probability at least~$1/2$. 
Furthermore, for each pair $a,a' \in A$, we have 
$$
 {\mathbb E}\big(|N_G(a) \cap N_G(a') \cap D|\big)
 \geq 
 p\big(|N_G(a, U_{2}) \cap N_G(a', U_{2})| - |U_2 \cap C|\big)
 \geq 
\eps u/4N^{\delta _1} \geq 2u^{1-\eps  /2},
$$
and similarly for each pair in $B$.
By Chernoff's inequality (see \cite[Appendix A ]{AS}),
with positive probability $D$ satisfies $|D| \leq \eps u$ and 
$|N_G(c) \cap N_G(c') \cap D| \geq u^{1-\eps /2}$ 
for all $\{c,c'\} \subset A$ or $\{c,c'\} \subset B$.
We fix any set $D$ with these properties.\np
		
It remains to show that $(A,B,D)$ is a $(u, \eps )$-hub.
Suppose $S = \{s_1,\ldots ,s_m\}$ and $T = \{ t_1,\ldots , t_m \}$ 
are disjoint subsets of $A \cup B$ with  $m \le u^{1-\eps }$.
Let $\ell _1, \ldots ,\ell _m \geq 2$ with 
$\sum _{i\in [m]} (\ell _i + 1) \leq 2(1-\eps )u$, where each 
$\ell _i$ is a bipartite length for $\{s_i,t_i\}$ in $G[A,B]$.
We want to find vertex-disjoint paths $P_1,\ldots, P_m$ in $G[A \cup B \cup D]$, 
where each $P_i$ is an $s_it_i$-path of length $\ell _i$. \np

First we claim that there is a path $R$ 
with $V(R) \cap (S \cup T) = \es$,
$|V(R) \cap D| \le 2m$ and $|(A \cup B) \sm V(R)| \le 4m$.
To see this, we consider $C \sm (S \cup T)$, which is 
the vertex-disjoint union of some paths $R_1,\ldots,R_k$,
where $k \le 2m$. By deleting at most two vertices from each 
such path $R_i$, we can assume that each starts and ends in $A$.
We form $R$ by `stitching' these paths together greedily,
using distinct vertices from $D \cap B$ to link successive 
paths $R_i$ and $R_{i+1}$ for all $i \in [k-1]$. 
This is possible by the common neigbourhood property,
as $2m \ll u^{1-\eps /2}$, so the claim follows. \np

Now we will construct the paths $P_1,\ldots, P_m$
by chopping $R$ into suitable subpaths and connecting
these to the endpoint sets $S$ and $T$.
To construct $P_1$, we consider separately the cases
$\ell_1=2$, $\ell_1=3$ and $\ell_1 \ge 4$.
If $\ell_1=2$ we let $P_1 = s_1u_1t_1$ for any 
common neighbour $u_1 \in D$ of $s_1$ and $t_1$ 
disjoint from all previous choices.
If $\ell_1=3$ we let $P_1 = s_1u_1v_1t_1$ 
where $u_1 \in D$ is a neighbour of $s_1$
and $v_1 \in D$ is a common neighbour of $t_1$ and $u_1$,
with $\{u_1,v_1\}$ disjoint from  all previous choices.
Lastly, if $\ell _1 \geq 4$ we consider a subpath $R_1$ 
starting at one end of $R$ with length $\ell_1-3$.
As $\ell _1$ is a bipartite length for $\{s_1,t_1\}$, 
it is possible to delete a vertex from one end of $R_1$ 
to obtain a subpath $R_1'$ of length $\ell _1 - 4$ 
which starts on the same side of the partition as $s_1$ 
and ends on the same side as $t_1$.
Writing $x_1$ and $y_1$ for the ends of $R_1'$,
we form the $s_1t_1$-path $P_1$ of length $\ell _1$
from $R_1'$ by adding paths $s_1u_1x_1$ and $t_1v_1y_1$ where 
$u_1 \in D$ is a common neighbour of $s_1$ and $x_1$,
and $v_1 \in D$ is a common neighbour of $t_1$ and $y_1$,
with $\{u_1,v_1\}$ disjoint from all previous choices.
To continue, we modify $R$ by removing $R_1$,
then repeat the process to find $P_2$, and so on. \np

It remains to show that the above process succeeds,
i.e.\ that we do not ever exhaust $R$ or any
common neighbourhoods in $D$. To see this, 
note that initially $|R| \geq |\bigcup _{i\in [k]} R_i| 
\geq 2u - |S \cup T| - 2m \geq \sum _{i\in [m]} \ell _i$. 
As we remove at most $\ell _i$ vertices from $R$ to build each path $P_i$, 
we never run out of vertices in $R$. Also, we used at most $2$ vertices 
from $D$ to build each $P_i$, and so at most $4m \leq u^{1-\eps /2}/2$ 
from $D$ in total. As $|N_G(a) \cap N_G(a')\cap D| \geq u^{1-\eps /2}$ 
for all $\{a,a'\} \subset A$ or $\{a,a'\} \subset B$, 
we never run out of common neighbours in $D$.
\end{proof}

We conclude this section by showing that any supposed 
counterexample to Theorem \ref{thm: clique-vs-cycle} 
can be partitioned almost entirely into quite large hubs.

\begin{cor}
\label{cor: decomp into hubs}
Given $\eps > 0$ there is $C\ge 1$ so that the following holds 
for all $\ell , n \in {\mathbb N}$ with $n\geq 3$ and 
$\ell  \geq C\frac{\log n}{\log \log n}$.
Suppose $G$ is a $C_{\ell }$-free graph on 
$N = (\ell -1)(n-1)+1$ vertices with $\aA (G) \leq n-1$. 
Then there is a partition 
$V(G) = W \cup \bigcup _{i\in [L]} (A_i \cup B_i \cup D_i)$ 
so that $|W| \leq \eps N$ and each $(A_i,B_i,D_i)$ 
is a $(u,\eps )$-hub with $u := \ell^{1-\eps }$.
\end{cor}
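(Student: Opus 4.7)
We construct the required partition by iteratively extracting hubs. Starting from $G_0 := G$, at step $i$ we check whether $N_i := |V(G_i)| \le \eps N$; if so, we stop and set $W := V(G_i)$, while otherwise we locate a $(u,\eps)$-hub $(A_i, B_i, D_i) \sub V(G_i)$, put $G_{i+1} := G_i - (A_i \cup B_i \cup D_i)$, and continue. Since each hub has at most $(2+\eps)u$ vertices, the process terminates and yields the desired partition. The heart of the argument is the inductive step: producing a $(u,\eps)$-hub in $G_i$ whenever $N_i > \eps N$.

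To find a hub in $G_i$, we first use Lemma \ref{lem: indep bound for cycle or dense subgraph} to extract a dense subgraph $H_i$, then apply Lemma \ref{lem: locating hubs in G} inside $H_i$. Note that $G_i \sub G$ is $C_{\ell}$-free and $\aA(G_i) \le n-1$, so taking $d := N_i/(n-1) \ge \eps \ell/2$, $D := \ell$, and $\gG := (\log n)^c$ for a small constant $c = c(\eps) > 0$, one readily checks that $3\log_{\gG} N_i \le \ell \le D$ and $d \ge 8\gG^2$ hold provided $C$ is sufficiently large. Lemma \ref{lem: indep bound for cycle or dense subgraph} then produces $H_i \sub G_i$ with $v(H_i) \le \ell$ and $e(H_i) \ge d^2/(2^9\gG^4) = \OO(\ell^2/(\log n)^{4c})$.

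Next, we apply Lemma \ref{lem: locating hubs in G} to $H_i$ with a small parameter $\eps_1 \in (0,\eps)$ chosen in terms of $\eps$, and target hub size $u := \ell^{1-\eps}$. The edge bound forces $v(H_i) = \OO(\ell/(\log n)^{2c})$; for $C$ large and $c$ small (in terms of $\eps-\eps_1$), this is at least $\ell^{(1-\eps)/(1-\eps_1)}$, placing $u$ inside the admissible range $[v(H_i)^{\eps_1}, v(H_i)^{1-\eps_1}]$, and the same slack gives the density condition $d(H_i) \ge v(H_i)^{1-\dD}$ for the $\dD = \dD(\eps_1)$ supplied by the hub lemma. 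Lemma \ref{lem: locating hubs in G} thereby yields a $(u,\eps_1)$-hub in $H_i$. Since each defining property of a $(u,\eps)$-hub is weaker than that of a $(u,\eps_1)$-hub when $\eps_1 \le \eps$ (smaller bound on $|D|$, wider range of $m$, smaller bound on $\sum (\ell_i+1)$), this $(u,\eps_1)$-hub is automatically a $(u,\eps)$-hub, completing the inductive step.

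The main obstacle is simultaneously satisfying the parameter constraints of both lemmas: $\gG$ must be a polylogarithmic factor in $n$ to force $3\log_{\gG} N_i \le \ell$, yet the polylogarithmic losses this introduces into $e(H_i)$ (and hence into the available range for $u$ and the density of $H_i$) must be absorbed into the perturbations $\eps - \eps_1$ and $\dD$ that the hub lemma tolerates. The hypothesis $\ell \ge C\log n/\log\log n$ provides exactly the slack required: it ensures $\log \ell \gtrsim \log\log n$, so that $\ell^{\eps'}$ dominates any fixed power of $\log n$ once $C$ is taken large enough in terms of $\eps$ (and $\eps'$).
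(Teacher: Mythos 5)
Your proposal is correct and follows essentially the same route as the paper: iteratively extract hubs by first applying Lemma \ref{lem: indep bound for cycle or dense subgraph} to the current leftover set to get a subgraph on at most $\ell$ vertices with $\ell^{2-o(1)}$ edges, then applying Lemma \ref{lem: locating hubs in G} with a slightly smaller hub parameter and using monotonicity of the hub property in $\eps$. The only difference is cosmetic parameter bookkeeping (you take $\pow$ to be a power of $\log n$ and $d = N_i/(n-1)$, whereas the paper takes $\pow = \ell^{\beta}$ and $d = \ell^{1-\beta}$ with $\beta$ small in terms of the $\delta$ from the hub lemma), and both choices are absorbed by the hypothesis $\ell \ge C\frac{\log n}{\log\log n}$ in the same way.
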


\begin{proof}
Let $\dD>0$ be such that 
Lemma \ref{lem: locating hubs in G} 
applies with $\eps/2$ in place of $\eps$.
Let $\bB = \dD/7$ and $C \ge 4/\bB$ be sufficiently large.
It suffices to show that any $W \subset V(G)$ with 
$|W| > \eps N$ contains a $(u,\eps )$-hub, 
as then iteratively removing such hubs proves the lemma.\np 

To see this, we claim that we can apply 
Lemma \ref{lem: indep bound for cycle or dense subgraph} 
to $G[W]$ with $\pow = \ell^{\bB}$, $D=\ell$ and $d = \ell^{1-\bB}$.
Indeed, for $C$ large we have $d \ge 8\pow^2$
and $\aA (G) \leq n-1 \leq |W|/d$, and also $D = \ell \geq 
\frac {3 \log (N)}{\log ( \ell ^{\bB } )} \geq {3\log _{\pow }(|W|)}$, 
as $\ell \geq (4/\beta )\frac {\log n }{\log \log n}$.
Thus Lemma \ref{lem: indep bound for cycle or dense subgraph} 
gives a subgraph $H$ of $G[W]$ with $v(H) \le \ell$
and $e(G[U]) \geq d^2/{2^9\pow ^4} = 2^{-9}\ell^{2-6\bB}
\geq \ell^{2-\dD}$. Now Lemma \ref{lem: locating hubs in G} 
gives a $(u,\eps )$-hub in $G[W]$.
\end{proof}	

\section{Stability} \label{sec:stab}

In this section we upgrade the decomposition provided by
Corollary \ref{cor: decomp into hubs} to obtain our main
stability result, namely that any supposed 
counterexample to Theorem \ref{thm: clique-vs-cycle} 
can be partitioned almost entirely into quite large
approximate cliques, and furthermore there are no edges
between parts. The precise statement is as follows.

\begin{lem} \label{lem: almost clique decomp}
Given $\eta > 0$ there is $C\ge 1$ so that the 
following holds for all $\ell , n \in {\mathbb N}$ with $n\geq 3$ 
and $\ell  \geq C\frac {\log n}{\log \log n}$.
Suppose $G$ is a $C_{\ell }$-free graph on 
$N = (\ell -1)(n-1)+1$ vertices with $\aA (G) \leq n-1$. 
Then there are disjoint sets 
$V_1,\ldots, V_s \subset V(G)$ such that: 
\begin{enumerate}[(i)]
\item $|V_i| \in [(1-\eta ) \ell , \ell ]$ for all $i\in [s]$;
\item $|\bigcup _{i\in [s]} V_i | \geq (1-\eta )N$;
\item $G[V_i]$ has minimum degree 
at least $(1-\eta )\ell$ for all $i\in [s]$;
\item There are no edges of $G$ between $V_i$ and $V_j$
for all distinct $i,j \in [s]$.
\end{enumerate}
\end{lem}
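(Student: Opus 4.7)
The plan is to apply Corollary \ref{cor: decomp into hubs} with some $\eps' \ll \eta$, obtaining a partition $V(G) = W_0 \cup \bigcup_{i\in [L]}(A_i \cup B_i \cup D_i)$ with $|W_0| \leq \eps' N$, where each $(A_i,B_i,D_i)$ is a $(u,\eps')$-hub of total size $\approx (2+\eps')u$ and $u=\ell^{1-\eps'}$. Since by the induction used in the proof of Theorem \ref{thm: clique-vs-cycle} we may assume $\dD(G) \geq \ell-1$, the task becomes to amalgamate these small hubs (and absorb the leftover vertices of $W_0 \cup \bigcup_i D_i$) into larger clusters $V_1,\ldots,V_s$ of size close to, but strictly below, $\ell$.

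The first step is to build an auxiliary \emph{cluster graph} $\mc{C}$ on the hub indices $[L]$, placing an edge between $H_i$ and $H_j$ whenever the number of $G$-edges between $A_i\cup B_i$ and $A_j\cup B_j$ exceeds some threshold polynomially smaller than $u^{1-\eps'}$. The central technical claim is that the union of hubs inside any connected component of $\mc{C}$ has total size at most $\ell-1$. The idea is that such a chain of hubs behaves like a single \emph{macro-hub}: using the path-joining property of Definition \ref{def:hub} inside each piece, together with the linking edges as bridges, one can realise an $s$-$t$ path of essentially any prescribed length (with the correct parity) up to the total cluster size. Once the union reaches $\ell$ vertices, one tunes these internal path lengths (and the parity via choice of linking edges) to produce a cycle of length \emph{exactly} $\ell$ inside the cluster, contradicting $C_\ell$-freeness.

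Second, I would sweep the few edges of $G$ that cross between distinct components of $\mc{C}$ into a discard set, and absorb each leftover vertex $w \in W_0\cup\bigcup_i D_i$ into the unique cluster (if any) holding the majority of its neighbours; vertices whose neighbourhoods are scattered thinly across several components are discarded. A counting argument using $N=(\ell-1)(n-1)+1$, $\aA(G)\leq n-1$ and $\dD(G)\geq \ell-1$ forces all but a $\eta$-fraction of the cluster sizes to lie in the window $[(1-\eta)\ell,\ell-1]$, giving (i) and (ii). Property (iv) is immediate from the cluster-graph definition (after edge pruning), and the minimum-degree bound (iii) follows from $\dD(G)\geq \ell-1$ together with the observation that any $v\in V_i$ with too many neighbours outside $V_i$ would produce a new crossing edge contradicting the finality of $\mc{C}$.

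The main obstacle is the macro-hub argument that caps cluster sizes at $\ell-1$: chaining the $(u,\eps')$-hub connection property across many linked hubs requires a careful induction on the number of hubs being amalgamated, keeping track of path-length parities, vertex-disjointness across all linking bridges, and not exhausting the budget $m\leq u^{1-\eps'}$ available inside any individual hub. A secondary but delicate difficulty lies in the absorption step: one must simultaneously guarantee that absorbed vertices do not create a hidden $C_\ell$ via two attachments to one cluster's macro-hub, and that the discard set remains of size $\ll \eta N$, which ultimately uses the independence hypothesis $\aA(G)\leq n-1$.
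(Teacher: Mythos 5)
Your overall architecture (hub decomposition from Corollary \ref{cor: decomp into hubs}, an auxiliary cluster graph on the hubs, a ``macro-hub'' argument bounding component sizes by exhibiting a $C_\ell$, then a cleanup) matches the paper's strategy in outline, but two steps as you describe them have genuine gaps. First, the parity issue is not something you can ``tune via choice of linking edges.'' The connection property of Definition \ref{def:hub} only realises \emph{bipartite} lengths, so if the hubs in a cluster and the bridges between them all respect a bipartition, every cycle you can build this way has controlled parity, and for odd $\ell$ no choice of internal lengths or linking edges produces a $C_\ell$; a bipartite-looking cluster of size well above $\ell$ is then perfectly consistent with $C_\ell$-freeness. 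This is exactly where the paper spends its effort: it defines \emph{parity broken} hubs (a matching of size $2u^{1-\eps}$ inside $A_i$), proves via Proposition \ref{prop: indep disjoint matching} and Lemma \ref{lem: parity breaking} that almost all hubs are parity broken --- and this step uses $\aA(G)\le n-1$ in an essential, non-trivial way (building a handle system through many non-parity-broken hubs whose total handle length has the same parity as $\ell$, then applying Lemma \ref{lem: hub lemma}(i)). In your proposal the independence hypothesis is only invoked for bounding the discard set, so the parity obstruction is simply not addressed.

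Second, your central claim that every component of the cluster graph has size at most $\ell-1$ is too strong for this method, and the consequences you draw from it fail. The hub connection property carries $(1-\eps)$ losses (plus the $D_i$'s and bridge vertices), so the cycle lengths you can realise in a cluster of $h$ hubs only reach about $2(1-\eps)u h$; a cluster of size exactly $\ell$ need not contain a $C_\ell$. The paper accordingly only caps components at $(1+2\eps)\ell$ (Lemmas \ref{lem: average degree control of H} and \ref{lem: connected component of H_2} --- note also that bounding how many vertices must be deleted to disconnect the clusters needs the average-degree control of Lemma \ref{lem: average degree control of H}, which your ``sweep the crossing edges into a discard set'' elides: edges cannot be discarded, only vertices, and their number must be controlled). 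The final sets $V_i$ with $|V_i|\le \ell-1$ and $\dD(G[V_i])\ge(1-\eta)\ell$ are then extracted by an edge-count inside each component, passing to a $k$-core via Proposition \ref{prop: moving to the k-core}(i), and invoking Theorem \ref{thm: dirac} to rule out $|V_i|\ge\ell$. Your route to property (iii) via $\dD(G)\ge\ell-1$ does not work: that minimum degree bound is not a hypothesis of the lemma (it is only arranged later, in the induction proving Theorem \ref{thm: clique-vs-cycle}), and even granting it, a vertex of $V_i$ could have most of its neighbours in the discarded set $W$, so no per-cluster minimum degree follows. The absorption of $W_0\cup\bigcup_i D_i$ by majority neighbourhoods is likewise unnecessary for the stability statement (the paper simply counts these vertices into the $\eta N$ loss) and creates the attachment hazards you yourself flag without a mechanism to resolve them.
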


\noi Throughout the section we will fix $G$ 
as in Lemma \ref{lem: almost clique decomp},
with $\eps<\eps_0(\eta)$ sufficiently small
and $C$ sufficiently large so that
Corollary \ref{cor: decomp into hubs} gives a partition 
$V(G) = W \cup \bigcup _{i\in [L]} (A_i \cup B_i \cup D_i)$ 
with $|W| \leq \eps N$, where each $(A_i,B_i,D_i)$ 
is a $(u,\eps )$-hub with $u = \ell^{1-\eps }$.\np

The proof proceeds in several stages, 
gradually refining the structure provided from the hubs 
to that in Lemma \ref{lem: almost clique decomp}.
In the next subsection we show how to find cycles
of specified lengths in a system of hubs and `handles'
(suitable paths connecting the hubs). There is a potential
parity obstacle due to the bipartite structure of hubs,
but we can eliminate this obstacle using the bound on $\aA (G)$;
this is achieved in the second subsection.
In the third subsection we study the interaction between hubs:
roughly speaking, we consider an auxiliary graph $H_3$,
where $V(H_3)$ consists of most of the hubs and
we join two hubs if they are connected by a large matching.
We show that $H_3$ cannot have large components,
and then in the final subsection we show that these components
identify the approximate cliques needed to prove
Lemma \ref{lem: almost clique decomp}.

\subsection{Cycles from hubs and handles}

In this subsection we show how to find cycles from
a suitable system of hubs and connecting paths.
Our first lemma concerns the following condition under which we can
drop the parity restriction on lengths of paths within a hub.
We say that a $(u,\eps )$-hub $(A,B,D)$ is \emph{parity broken} 
if $G[A]$ contains a matching of size $2u^{1-\eps }$.\np

\begin{lem} \label{lem: parity broken hub lem} 
Suppose $(A,B,D)$ is a parity broken $(u,\eps)$-hub in $G$.
Let $s_1,\ldots ,s_m, t_1,\ldots , t_m \in A \cup B$ be distinct 
and $\ell _1, \ldots ,\ell _m \geq 2$ with 
$\sum _{i\in [m]} (\ell _i + 1) \leq 2(1-\eps )u$.
Suppose also that, for each $i \in [m]$, if $\ell_i$ 
is not a bipartite length for $\{s_i,t_i\}$ in $G[A,B]$ 
then $\ell_i \ge 7$.
Then there are vertex-disjoint paths $P_1,\ldots, P_m$ 
in $G[A \cup B \cup D]$, where each $P_i$ 
is an $s_it_i$-path of length $\ell _i$.
\end{lem}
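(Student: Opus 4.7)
The plan is to use matching edges inside $G[A]$ as \emph{parity flips}: every time we need a path whose length $\ell_i$ has the wrong bipartite parity for $\{s_i,t_i\}$, we route it through a single edge $a_ia_i' \in G[A]$, splitting it into two shorter sub-paths whose lengths \emph{are} bipartite-legal, and then invoke the ordinary $(u,\eps)$-hub connection property on the enlarged system of sub-path demands.

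Let $F \subseteq [m]$ be the set of indices $i$ for which $\ell_i$ is not a bipartite length for $\{s_i,t_i\}$ in $G[A,B]$; for $i\notin F$ we shall obtain $P_i$ directly from the hub. First I would select the flip edges. Since $G[A]$ contains a matching $M$ with $|M|=2u^{1-\eps}$, and $|\{s_j,t_j:j\in[m]\}|\le 2m$, at most $2m$ edges of $M$ meet this vertex set, so we can pick pairwise disjoint edges $\{a_ia_i':i\in F\}\subseteq M$ avoiding all $s_j,t_j$.

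Next I would choose the split $\ell_i=\ell_i'+1+\ell_i''$ for each $i\in F$, requiring $\ell_i',\ell_i''\ge 2$, with $\ell_i'$ a bipartite length for $\{s_i,a_i\}$ and $\ell_i''$ a bipartite length for $\{a_i',t_i\}$. Since $a_i,a_i'\in A$, the parity of each required sub-length is determined by the side of $s_i$ (respectively $t_i$). A four-case check on the sides of $\{s_i,t_i\}$ shows such a split exists whenever $\ell_i\ge 5$, except in the case $s_i,t_i\in B$, where both $\ell_i'$ and $\ell_i''$ must be odd and at least $3$, forcing $\ell_i\ge 7$. This is precisely where the hypothesis $\ell_i\ge 7$ is used.

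Finally I would apply the hub connection property to the combined system consisting of the demand $(\{s_i,t_i\},\ell_i)$ for $i\notin F$ together with the two demands $(\{s_i,a_i\},\ell_i')$ and $(\{a_i',t_i\},\ell_i'')$ for $i\in F$. By construction all endpoints are distinct, every demanded length is a bipartite length, and the total length budget is preserved because
\[
(\ell_i'+1)+(\ell_i''+1)=\ell_i+1,
\]
so $\sum(\text{new lengths}+1)=\sum_{i\in[m]}(\ell_i+1)\le 2(1-\eps)u$. The hub then produces vertex-disjoint paths realising every demand; for $i\in F$ we concatenate the two sub-paths via the edge $a_ia_i'$ (which lies in $G[A\cup B\cup D]$ and is vertex-disjoint from all other pieces by our choice in the first step) to obtain the required $s_it_i$-path $P_i$ of length exactly $\ell_i$. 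The main obstacle is the parity bookkeeping in the case analysis, together with confirming that the enlarged system of demands still fits the hub's quantitative constraints; the abundance of the matching ($2u^{1-\eps}$ edges against at most $2m$ forbidden vertices) and the length-preserving nature of the split make this straightforward.
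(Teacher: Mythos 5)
Your proposal is correct and follows essentially the same route as the paper: use the parity-breaking matching in $G[A]$ to split each demand of the wrong parity into two bipartite-legal sub-demands through a matching edge, apply the hub's connection property to the enlarged system, and stitch the pieces back together. Your explicit parity case analysis (showing the $B$--$B$ case is what forces $\ell_i \ge 7$) just makes precise what the paper leaves implicit.
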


\begin{proof}
As $(A,B,D)$ is parity broken and $2u^{1-\eps }-2m \geq m$,  
there is a matching ${\cal M} = \{x_iy_i: i\in [m]\}$ in $G[A]$ 
which is vertex-disjoint from $\{s_1,\ldots, s_m, t_1,\ldots ,t_m\}$. 
We will apply the connection property of $(A,B,D)$ to a collection
of pairs $(s_{i,k},t_{i,k})$ where there are one or two pairs
for each original pair $(s_i,t_i)$. 
If $\ell _i$ is a bipartite length for $\{s_i,t_i\}$ 
then we take one pair $(s_{i,1},t_{i,1})=(s_i,t_i)$
with the same length $\ell_{i,1} = \ell_i$.
Otherwise, we take two pairs
$(s_{i,1},t_{i,1})=(s_i,x_i)$ and 
$(s_{i,2},t_{i,2})=(y_i,t_i)$ with lengths
$\ell_{i,1}, \ell_{i,2} \geq 2$ chosen such that 
both $\ell_{i,k}$ are bipartite lengths 
for $\{s_{i,k}, t_{i,k}\}$ in $G[A,B]$ 
with $\ell_{i,1} + \ell_{i,2} + 1 = \ell _i$. 
By the connection property of $(A,B,D)$
we find vertex-disjoint $s_{i,k}t_{i,k}$-paths
of lengths $\ell_{i,k}$, which combine
with edges from ${\cal M}$ to produce 
the required paths $P_1,\ldots, P_m$.	
\end{proof}

Let $\mc{H}$ be a set of vertex-disjoint $(u,\eps)$-hubs
and $\mc{P}=\{P_1,\dots,P_k\}$ be a set
of vertex-disjoint paths in a graph $G$.
Suppose $P_i$ is an $b_ia_{i+1}$-path for $i \in [k]$,
writing $a_{k+1}:=a_1$.
We call $\mc{P}$ a {\em handle system} for $\mc{H}$ if
\begin{enumerate}[(i)]
\item each $P_i$ is internally disjoint 
from $\bigcup \{ V(H): H \in \mc{H} \}$,
\item for each $i \in [k]$ there is $H_i \in \mc{H}$
with $\{a_i,b_i\} \sub V(H_i)$,
\item each $H \in \mc{H}$ contains 
at most $u^{1-\eps}/2$ of $\{a_1,\dots,a_k\}$.
\end{enumerate}
Note that we often apply the above definition with
some paths $P_i$ consisting only of the edge $b_ia_{i+1}$
(in which case condition (i) is vacuous).
The next lemma shows how handle systems
provide cycles of specified lengths.

\begin{lem} \label{lem: hub lemma}
Let $\mc{H}$ be a set of vertex-disjoint $(u,\eps)$-hubs in $G$ 
and $\mc{P}=\{P_1,\dots,P_k\}$ be a handle system for $\mc{H}$,
where each $P_i$ is an $b_ia_{i+1}$-path of length $\ell_i$.
Let ${\ell}_{\total} = \sum _{i\in [k]} \ell _i$. Then:
\begin{enumerate}[(i)]
\item  If $\{a_i, b_i\} \sub A_i$ for all $i\in [k]$ 
then $G$ contains an $\ell$-cycle for any 
$\ell  \in [2k+\ell _{\total}, 2(1-\eps )u |\mc{H}| + 
{\ell }_{\total} - 2k]$ of the same parity as $\ell_{\total}$.
\item If some $\{a_j,b_j\}$ with $j \in [k]$ is contained in a 
parity broken hub of $\mc{H}$ then $G$ contains an $\ell$-cycle for any 
$\ell  \in [7k+\ell _{\total}, 2(1-\eps )u |\mc{H}| + \ell _{\total} - 2k]$.
\end{enumerate}
\end{lem}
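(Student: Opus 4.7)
The strategy is to construct the $\ell$-cycle by choosing, for each $i\in[k]$, an $a_ib_i$-path $Q_i$ of length $m_i$ inside hub $H_i$, and concatenating the $Q_i$ with the given handles $P_i$. The resulting closed walk has length $\ell_{\total}+\sum_i m_i$, so the task reduces to selecting integers $m_i\geq 2$ summing to $M:=\ell-\ell_{\total}$, subject to (a) parities compatible with the bipartite structure of each hub and (b) the per-hub capacity $\sum_{i:\,H_i=H}(m_i+1)\leq 2(1-\eps)u$ of Definition~\ref{def:hub}. Handle-system condition (iii) yields $m_H:=|\{i:H_i=H\}|\leq u^{1-\eps}/2<u^{1-\eps}$, so the pair-count hypothesis in Definition~\ref{def:hub} is met. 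Once the $m_i$'s are chosen, the vertex-disjoint $Q_i$'s in each hub together with the pairwise vertex-disjoint handles (internally disjoint from the hubs by handle-system (i)) form the required $\ell$-cycle.

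For (i), since $\{a_i,b_i\}\sub A_i$ the bipartite lengths for $(a_i,b_i)$ are the even integers $\geq 2$, and $M$ is even by the parity hypothesis. Initialise $m_i=2$ for all $i$, giving $\sum m_i=2k\leq M$, and distribute the surplus $M-2k$ greedily in steps of $2$. Each hub $H$ has remaining capacity $2(1-\eps)u-3m_H$, totalling $2(1-\eps)u|\mc{H}|-3k$, which comfortably absorbs the surplus since $M-2k\leq 2(1-\eps)u|\mc{H}|-4k$ by the upper bound on $\ell$. Applying Definition~\ref{def:hub} within each hub produces the desired $Q_i$'s in parallel.

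For (ii), write $p_i\in\{0,1\}$ for the natural bipartite parity of $(a_i,b_i)$ in $H_i$ and set $q=\sum_i p_i\bmod 2$. Either $M\equiv q\pmod 2$ and we assign each $m_i$ its natural-parity minimum in $\{2,3\}$; or $M\not\equiv q\pmod 2$, in which case we fix $j$ with $H_j$ parity-broken, set $m_j$ to the flipped-parity minimum in $\{7,8\}$, and keep the remaining $m_i$ at their natural-parity minima. Either way the initial sum is at most $3k+5$, which is at most $M$ (using $M\geq 7k$ for $k\geq 2$, and a direct parity check for $k=1$). The same capacity bookkeeping as in (i) lets us distribute the surplus in steps of $2$; finally, Definition~\ref{def:hub} applied in non-parity-broken hubs and Lemma~\ref{lem: parity broken hub lem} applied in $H_j$ produce the internal paths $Q_i$.

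The main obstacle is the joint bookkeeping between the global length budget $M$, the per-hub capacity constraint (whose $+1$ per-path overhead is exactly what the $-2k$ slack in the upper bound on $\ell$ absorbs), and, in (ii), the parity condition that can be rectified only once and only inside a parity-broken hub via Lemma~\ref{lem: parity broken hub lem}. Once the $m_i$ are chosen compatibly, the connection properties assemble the internal paths in parallel across the hubs, and concatenation with the handles yields the desired $\ell$-cycle.
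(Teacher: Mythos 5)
Your proposal is correct and follows essentially the same route as the paper: write $\ell-\ell_{\total}$ as a sum of per-pair internal lengths that are bipartite lengths (or $\geq 7$ at a parity-broken hub in case (ii)), respect the per-hub budget $\sum(\ell'_i+1)\le 2(1-\eps)u$, and then invoke the hub connection property together with Lemma \ref{lem: parity broken hub lem}, concatenating with the handles. The only difference is that you carry out explicitly the greedy/parity bookkeeping that the paper's one-line proof leaves implicit, and your slack estimates (the $-2k$ absorbing both the $+1$ overheads and the step-of-two rounding) do check out.
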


\begin{proof}
We write $\ell - {\ell}_{\total} = \sum _{i\in [k]} \ell'_i$,
where $\ell_j \ge 7$ (for (ii)), each $\ell'_i \ge 2$ with $i \ne j$
is a bipartite length for its hub, and for each $H \in \mc{H}$
we have $\sum \{ \ell'_i+1: \{a_i,b_i\} \sub V(H) \} \le 2(1-\eps)u$.
By the connection property of hubs,
and Lemma \ref{lem: parity broken hub lem} 
for the parity broken hub,
we can find vertex-disjoint $a_ib_i$-paths
of length $\ell'_i$ for each $i \in [k]$,
which combine with $\mc{P}$ to produce an $\ell$-cycle.
\end{proof}

\subsection{Breaking parity}

In this subsection we will prove that almost all hubs of $G$ are parity broken. 
This will use the bound on the independence number of $G$,
via the following proposition.

\begin{prop} \label{prop: indep disjoint matching}
Let $m,d,s \in {\mathbb N}$ with $m \geq 3d$. 
Suppose $G$ is a graph with $V(G) = \bigcup_{i\in [s]} I_i$, 
where $I_1,\ldots ,I_s$ are disjoint independent sets of order $m$. 
Suppose also that $\aA (G) < v(G)/ 12d$. 
Then there is $\{i_0,\ldots ,i_d\} \subset [s]$ 
and a matching of size $d$ with one edge in 
each $G[I_{i_{j-1}}, I_{i_j}]$ for $j\in [d]$.
\end{prop}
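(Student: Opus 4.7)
The plan is to prove the contrapositive: assume that no sequence $\{i_0,\ldots,i_d\} \subset [s]$ and matching of the stated form exist, and construct an independent set in $G$ of size at least $v(G)/(12d)$, contradicting $\alpha(G) < v(G)/(12d)$.

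My approach is to iteratively extract chunks of an independent set from ``stuck'' maximal configurations. I would initialize $R := [s]$ and proceed in rounds. At round $t$, consider $G_t := G[\bigcup_{j \in R} I_j]$ and pick a \emph{valid sequence} of maximum length $k_t$: distinct indices $(i_0,\ldots,i_{k_t})$ in $R$ together with vertex-disjoint edges $e_\ell \in G[I_{i_{\ell-1}}, I_{i_\ell}]$ for $\ell \in [k_t]$. By assumption applied to $G_t \subseteq G$, we have $k_t < d$. If $k_t \ge 1$, let $v$ be the endpoint of $e_{k_t}$ in $I_{i_{k_t}}$, set $S_t := I_{i_{k_t}} \setminus \{v\}$, and discard $\{i_0,\ldots,i_{k_t}\}$ from $R$ (removing at most $d$ indices). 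If $k_t = 0$, then $G_t$ has no edges (each $I_j$ being independent), so set $S_t := V(G_t)$ and terminate. The procedure also terminates once $R$ is empty.

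The crucial observation is that, by maximality, $S_t$ has no $G$-edges to $\bigcup_{j \in R_{t+1}} I_j$, where $R_{t+1}$ denotes the post-update $R$. Indeed, if some $u \in I_{i_{k_t}} \setminus \{v\}$ had a neighbor $w \in I_j$ for some $j \in R_{t+1}$, appending $uw$ to the sequence would extend it (the new edge is vertex-disjoint from the prior ones because $u \ne v$ and $w$ lies in an unused blob, and $j$ is a new index), contradicting maximality. Since each later $S_{t'}$ (with $t' > t$) lies in $\bigcup_{j \in R_{t+1}} I_j$, the $S_t$'s are pairwise non-adjacent; as each is also internally independent, the union $S := \bigcup_t S_t$ is independent in $G$.

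It remains to estimate $|S|$. Let $T$ be the number of rounds. If every round has $k_t \ge 1$, then $\sum_{t=1}^T (k_t + 1) = s$ with each summand at most $d$, giving $T \ge s/d$ and $|S| \ge T(m-1) \ge s(m-1)/d$. If instead round $T$ has $k_T = 0$, then the earlier rounds discarded at most $(T-1)d$ indices, so $|R_T| \ge s - (T-1)d \ge 1$ and
\[
|S| \ge (T-1)(m-1) + |R_T|\, m \ge sm - (T-1)\bigl((d-1)m + 1\bigr) \ge s(m-1)/d,
\]
using $T - 1 \le (s-1)/d$. Either way $|S| \ge s(m-1)/d \ge sm/(12d)$ since $11m \ge 12$ (from $m \ge 3d \ge 3$), contradicting the hypothesis. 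I anticipate that the main technical point will be the maximality/extension argument: one must check that a hypothetical extending edge is vertex-disjoint from the earlier matching edges, which works here because each blob is visited at most once, so the only previously used vertex of the last blob is $v$ itself.
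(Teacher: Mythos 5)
Your overall strategy is sound and genuinely different from the paper's: the paper takes a maximal matching $\mathcal{M}'$ using at most one edge between each pair $I_i,I_j$, passes to the auxiliary graph on $[s]$, extracts a path of length $d$ via Erd\H{o}s--Gallai, and, if the average degree is too small, builds the large independent set from the low-degree half via Tur\'an. Your contrapositive ``peeling'' argument avoids all of that machinery, and its key step --- that a maximum-length valid sequence in $G_t$ cannot be extended, because the only previously used vertex of the last blob is $v$, so $I_{i_{k_t}}\setminus\{v\}$ has no edges to the blobs indexed by $R_{t+1}$ --- is verified correctly, as is the Case 1 count. However, there is one step that fails as written: the claim $T-1 \le (s-1)/d$ in Case 2 is unjustified and false in general. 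A non-terminal round discards only $k_t+1$ indices, and $k_t+1$ can be as small as $2$ (e.g.\ if every maximum valid sequence found consists of a single edge), so the only a priori bound is $T-1\le (s-1)/2$; substituting that into your chain, the quantity $sm-(T-1)\bigl((d-1)m+1\bigr)$ can be far below $s(m-1)/d$ (indeed negative) once $d\ge 3$, so the displayed inequality does not follow.

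The good news is that the Case 2 conclusion is still true and the repair is one line: compare what each round keeps with what it removes from play. Each non-terminal round keeps $m-1$ vertices while removing $(k_t+1)m\le dm$ vertices, and the terminal round keeps all $|R_T|m$ remaining vertices, so
$|S| = (T-1)(m-1)+|R_T|m \ \ge\ \tfrac{m-1}{d}\bigl(\textstyle\sum_{t<T}(k_t+1)+|R_T|\bigr) \ =\ \tfrac{(m-1)s}{d},$
using $k_t+1\le d$ and $m\ge (m-1)/d$; this single computation in fact covers Case 1 as well. With that fix your proof is complete, and it even yields the stronger conclusion $\alpha(G)\ge s(m-1)/d$ (the paper's argument only produces an independent set of size about $ms/12d$), so the hypothesis $\alpha(G)<v(G)/12d$ is used with room to spare.
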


\begin{proof}
Consider a maximal matching ${\cal M}'$ in $G$ with the property 
that ${\cal M}'$ contains at most one edge of $G[I_i,I_j]$ 
for all distinct $i,j\in [s]$. We use ${\cal M}'$ 
to define a graph $H$ with $V(H) = [s]$, where
$ij \in E(H)$ if and only if ${\cal M}'$ contains an edge from $G[I_i,I_j]$. 
To prove the proposition, it suffices to show that $H$ contains
a path of length $d$. By Theorem \ref{thm: erdos-gallai},
it suffices to prove $d(H) > d-1$.\np
	
For contradiction, suppose $d(H) \leq d-1$. 
Let $S \sub V(H)$ with $|S|=s/2$ be such that
$d_H(i) \le d_H(j)$ for all $i \in S$, $j \notin S$.
Then $d_H(i) \le 2(d-1)$ for all $i \in S$.
By Theorem \ref{thm: Turan} (Tur\'an's Theorem),
there is an independent set $S' \subset S$ in $H$ 
with $|S'| \ge |S|/(2d-1) \geq s/4d$. 
For each $i \in S'$, let $J_i$ be obtained from $I_i$
by deleting all vertices contained in an edge of ${\cal M}'$. 
By the definition of ${\cal M}'$ and $S$,
we have $|J_i| \geq |I_i|-2d \geq m/3$. 
Since ${\cal M}'$ is maximal, there are no
edges between $J_i$ and $J_j$ for any distinct $i,j$,
so $\bigcup _{i\in S'} J_i$ is independent.
We deduce $\aA (G) \geq |S'|(m/3) \geq ms/12d = v(G)/12d$.
This contradiction completes the proof.
\end{proof}

We can now show that almost all $(u,\eps )$-hubs of $G$ are parity broken.

\begin{lem} \label{lem: parity breaking}
At least $(1-\eps )L$ hubs are parity broken.	 
\end{lem}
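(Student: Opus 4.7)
The plan is to argue by contradiction: suppose that fewer than $(1-\eps)L$ hubs are parity broken, so that the set $S$ of \emph{bad} (non-parity-broken) hubs satisfies $|S| > \eps L$. For each bad hub $(A_i,B_i,D_i)$, since $G[A_i]$ has no matching of size $2u^{1-\eps}$, the set of vertices uncovered by a maximum matching of $G[A_i]$ forms an independent set $I_i \subseteq A_i$ with $|I_i| \geq u - 4u^{1-\eps}$ (this is the standard matching--independence identity: unmatched vertices cannot be pairwise adjacent or the matching could be augmented).

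The key structural step is to upgrade the path-type output of Proposition \ref{prop: indep disjoint matching} to a \emph{cyclic} matching across bad hubs. Following the proof of Proposition \ref{prop: indep disjoint matching} on $\{I_i\}_{i \in S}$, I would take a maximal matching $\mc{M}$ in $G$ restricted to use at most one edge per pair $(I_i, I_j)$, and form the auxiliary graph $H$ on $S$ with $ij \in E(H)$ iff $\mc{M}$ uses an edge of $G[I_i, I_j]$. Using $\aA(G) \le n-1$ together with $|S|\cdot |I_i| \gtrsim \eps L u$ (which is $\gg n$ since $L \gtrsim \ell^\eps n$), a computation as in the proof of Proposition \ref{prop: indep disjoint matching} gives $d(H) \ge d$ for some $d$ of order $u$. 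Lemma \ref{lem: approx length cycle} applied to $H$ with $\pow = 2$ and $d_1 = d/32$ then produces a cycle in $H$ of length $k$ in a narrow window $[\Theta(u), \Theta(u) + O(\log n)]$. Since all selected edges belong to the single matching $\mc{M}$, this cycle in $H$ lifts to a cyclic sequence of bad hubs $H_{j_1}, \ldots, H_{j_k}$ joined by a matching $\{e_t\}_{t \in [k]}$ of $G$ with $e_t \in G[I_{j_t}, I_{j_{t+1}}]$ using distinct vertices at each hub.

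This furnishes a handle system with $\mc{H} = \{H_{j_t}\}_{t \in [k]}$ and length-$1$ handles $P_t = e_t$, whose endpoints lie in the $A$-sides $A_{j_t}$. Applying Lemma \ref{lem: hub lemma}(i), $G$ contains $C_{\ell'}$ for every $\ell'$ of the same parity as $\ell_{\total} = k$ in the range $[3k, (2(1-\eps)u-1)k]$. Since $k, u = \Theta(\ell^{1-\eps})$, this range comfortably contains $\ell$; in particular producing such a cycle would contradict $C_\ell$-freeness of $G$.

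The main technical obstacle is therefore arranging $k$ to have the parity of $\ell$. The cycle-length window of Lemma \ref{lem: approx length cycle} has width $\Omega(\log n) \ge 2$ and hence contains integers of both parities, but the lemma only asserts the existence of one cycle. I would handle this by either (i) strengthening the cycle-finding step inside $H$: passing to a subgraph of minimum degree $\Theta(u)$ via Proposition \ref{prop: moving to the k-core}(ii) and exploiting its density to locate short cycles of each parity inside the target window; or (ii) modifying the handle system by replacing one length-$1$ handle $e_t = xy$ by a length-$2$ path $x \to v \to y$ through a common neighbour $v$ located in a bad hub outside $\mc{H}$ (such a $v$ is found by a further application of $\aA(G) \le n-1$, noting that $N_G(x) \cup N_G(y)$ cannot essentially cover all bad hubs, together with some averaging over the many candidate hubs), thereby flipping the parity of $\ell_{\total}$ from $k$ to $k+1$. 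Either route delivers a $C_\ell$, giving the required contradiction.
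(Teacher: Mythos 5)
Your overall architecture (independent sets $I_i$ from the failure of parity breaking, a maximal matching between them as in Proposition \ref{prop: indep disjoint matching}, an auxiliary graph on the bad hubs, and then Lemma \ref{lem: hub lemma}(i) applied to a handle system of single matching edges with endpoints in the $A$-sides) is close in spirit to the paper's. But the step you yourself flag as the main obstacle -- forcing $\ell_{\total}=k$ to have the parity of $\ell$ -- is a genuine gap, and neither of your proposed fixes works. For (i): no amount of density forces cycles of both parities, since the auxiliary graph $H$ may simply be bipartite; if $\ell$ is odd you may find only even cycles in $H$, and Lemma \ref{lem: approx length cycle} gives no parity control within its window. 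For (ii): the bound $\aA(G)\le n-1$ gives no information about \emph{common} neighbourhoods of two prescribed vertices $x,y$ in different hubs -- in the extremal configuration (disjoint cliques of order $\ell-1$) such a pair has no common neighbour at all -- and your supporting remark that ``$N_G(x)\cup N_G(y)$ cannot cover all bad hubs'' points the wrong way: it would produce a non-neighbour, not the common neighbour $v$ you need to lengthen a handle by one. The paper avoids this entirely by never asking the auxiliary graph for a cycle of prescribed parity: it first applies the BFS decomposition (Lemma \ref{cor: bfs decomposition}) to the auxiliary graph with base $\pow=\ell^{1-2\eps}$, then uses Proposition \ref{prop: indep disjoint matching} inside a single layer $X_i$ to find a path of $d$ matching edges where $d\equiv\ell\pmod 2$ is \emph{chosen in advance}, and closes it with the unique tree path between its ends, which has \emph{even} length because both endpoints lie at the same depth $d_i$ in $T_i$; hence $\ell_{\total}=d+\mathrm{even}\equiv\ell$.

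There is also a secondary quantitative problem with your choice $\pow=2$ in Lemma \ref{lem: approx length cycle}: the window width is $2\log_2 v(H)=\Theta(\log N)$, and in the critical regime $\ell=\Theta(\log n/\log\log n)$ one has $\log N\approx\log n\le \ell\log\log n/C$, which is much larger than $u=\ell^{1-\eps}$ and can exceed $\ell$; so the cycle length $k$ returned by the lemma need not satisfy $3k\le\ell$, and $\ell$ may fall below the range $[3k,(2(1-\eps)u-1)k]$ you invoke. This is fixable by taking $\pow$ to be a power of $\ell$ (as the paper does here and in Lemma \ref{lem: average degree control of H}), so that $2\log_{\pow}N=O(\ell/C)$, and similarly your ``$d$ of order $u$'' must carry a constant below $1/3$ so that the hypothesis $m\ge 3d$ of Proposition \ref{prop: indep disjoint matching} holds; but without the parity mechanism above the proof does not go through.
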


\begin{proof}
First we note that if $u \geq 4n$ then every hub $(A,B,D)$
must be parity broken. Indeed, as $\aA(G)<n$, any maximal 
matching in $A$ has size at least $u/3 > 2u^{1-\eps}$.
Thus we may assume $n \geq u/4 = \ell^{1- \eps }/4$.\np

For contradiction, suppose the hubs 
$\{(A_i,B_i,D_i)\}_{i\in [s]}$ are not parity broken, 
where $s = \eps L \geq \eps N/4u$. 
We will obtain a contradiction by using 
Lemma \ref{lem: hub lemma} to find an $\ell$-cycle.
Specifically, it suffices to show that there is a set of hubs
$\mc{H} = \{H_1,\dots,H_k\}$ for some $k \ge \ell/u$,
and a handle system $\mc{P}=\{P_1,\dots,P_k\}$ for $\mc{H}$,
where each $P_i$ has length $\ell_i$,
starts in $H_i$ and ends in $H_{i+1}$, and 
${\ell}_{\total} = \sum_{i\in [k]} \ell _i \le \ell/4$
has the same parity as $\ell$.\np

To achieve this, we look for a cycle of suitable length in the 
auxiliary graph $H$ with $V(H)=[s]$, where $ij \in E(H)$ if 
and only if there is an edge between $A_i$ and $A_j$. We apply 
Lemma \ref{cor: bfs decomposition} to $H$ with 
$\pow  = \ell ^{1- 2 \eps }$ to obtain
triples $\{(x_i,X_i,T_i)\}_{i\in [t]}$ so that 
for each $i\in [t]$ there is $d_i \in [0, \log_{\pow }(N)]$ 
such that each vertex of $X_i$ is at distance $d_i$
from $x_i$ in $T_i$. We let $X= \bigcup _{i\in [t]} X_i$ 
and note that $|X| \geq {s}/{2\pow } \geq \eps N / 8u\pow 
\ge \eps n\ell ^{-1 + 3\eps }/16$.\np 

We will construct a cycle by applying
Proposition \ref{prop: indep disjoint matching}
to find a long path in $H[X]$.
Consider a maximal matching in each $G[A_i]$ and let 
$I_i \subset A_i$ denote the vertices not covered by
the matching. Then each $I_i$ is independent and
$|I_i| \geq u/2$ as $(A_i,B_i,D_i)$ is not parity broken.
Deleting some vertices if necessary we can assume 
$|I_i| = u/2$ for all $i\in [s]$.
Fix $d \in \mb{N}$ of the same parity as $\ell$
with $d = \ell^{\eps } + 2 \pm 1$. Then $u/2 > 3d$,
and for large $\ell$ we have
$$ 
 \big|\bigcup _{i\in X} I_i\big|/12 d 
  \ge (\eps n\ell^{-1 + 3\eps }/16) \cdot (\ell^{1-\eps}/24d)
> n > \aA(G).
$$ 
Thus Proposition \ref{prop: indep disjoint matching}
applies to $G\big[\bigcup_{i\in X} I_i\big]$, giving
some $\{i_0,\ldots ,i_d\} \subset X$ 
and a matching $M$ of size $d$ with one edge in 
each $G[I_{i_{j-1}}, I_{i_j}]$ for $j\in [d]$. \np

Note that $P = i_0 \ldots i_d$ is a path in $H$,
so Lemma \ref{cor: bfs decomposition} (iii)
implies that it is contained in some $X_i$.
By the distance property of $X_i$, 
the unique $i_0 i_d$-path $Q$ in $T_i$
is internally disjoint from $P$
and has length $\ell(Q)$ which is even
with $\ell(Q) \le 2d_i \le 2\log_{\pow }(N)$.
Let $S$ be a set of edges obtained by choosing
one edge in $G[A_x,A_y]$ for each edge $xy$ of $Q$
(which exists by definition of $H$).
Then $M \cup S$ consists of a set of vertex-disjoint
paths, which we denote $P_1,\dots,P_k$,
with lengths $\ell_1,\dots,\ell_k$,
where $k \ge d-1 > \ell/u$ (as $M$ is a matching)
and ${\ell}_{\total} = \sum_{i\in [k]} \ell_i 
= \ell(Q) + d \le \ell/4$ has the same parity as $\ell$.
Furthermore, $\{P_1,\dots,P_k\}$ is a handle system
for a set of hubs $\{H_1,\dots,H_k\}$ such that 
each $P_i$ starts in $H_i$ and ends in $H_{i+1}$.
Now Lemma \ref{lem: hub lemma} (i) gives an $\ell$-cycle,
which is the required contradiction.
\end{proof}

\noi \emph{Remark:} 
Henceforth, we will assume all hubs of $G$ are parity broken. 
This can be guaranteed by taking $\eps$ slightly smaller 
in Corollary \ref{cor: decomp into hubs} and 
moving into $W$ any hubs that are not parity broken.

\subsection{Interaction between hubs}

We will now organise most of the hubs into `components',
so that there is no large matching between two hubs
in different components. To do so, we write
$U_i = A_i \cup B_i$ for each $i \in [L]$
and consider a maximum matching ${\cal M}$ 
in $G[ \cup _{i} U_i]$ such that 
(a) every $uv \in {\cal M}$ goes between distinct hubs, 
and (b) between any two distinct hubs there is at 
most one edge of ${\cal M}$.
We define an auxiliary graph $H_1$ on $[L]$
where $ij \in E(H_1)$ iff there is an edge 
of ${\cal M}$ between $U_i$ and $U_j$.
We start by bounding the average degree of $H_1$. \np

\begin{lem} \label{lem: average degree control of H}
$H_1$ has average degree at most $\ell^{1 - 3\eps }$.
\end{lem}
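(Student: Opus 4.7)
The plan is to argue by contradiction. Suppose $d(H_1)>\ell^{1-3\eps}$; I will then construct a $C_\ell$ in $G$, contradicting $C_\ell$-freeness. The strategy is to find a short cycle in a dense subgraph of $H_1$, pull it back to a handle system of hubs linked by $\mc{M}$-edges, and invoke Lemma \ref{lem: hub lemma}(ii) to stitch this into a cycle of length exactly $\ell$.

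First, by Proposition \ref{prop: moving to the k-core}(ii), $H_1$ contains a subgraph $H_1'$ with $\delta(H_1')\geq \ell^{1-3\eps}/2$. I would then apply Lemma \ref{lem: approx length cycle} to $H_1'$ with $d_1=\ell^\eps$ and $\pow=\ell^{1-5\eps}$: the hypothesis $d(H_1')\geq 16\pow d_1 = 16\ell^{1-4\eps}$ is safely met, and the lemma outputs a cycle $v_1\cdots v_k v_1$ in $H_1'$ of some length $k\in[\ell^\eps,\ \ell^\eps + 2\log_\pow N]$. Using $\ell\geq C\log n/\log\log n$, a short calculation bounds $\log_\pow N$ by $O(\ell/C)$, so choosing $C$ sufficiently large gives $k\leq \ell/8$.

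This cycle picks out $k$ hubs $\mc{H}=\{(A_{v_i},B_{v_i},D_{v_i})\}_{i\in[k]}$ together with, for each $i$, an $\mc{M}$-edge $e_i$ between $U_{v_i}$ and $U_{v_{i+1}}$. Viewing each $e_i$ as a path of length $1$ makes $\mc{P}=\{e_i\}$ a handle system for $\mc{H}$ with $\ell_{\total}=k$: conditions (i)--(iii) hold trivially since the handles have no interior and each hub is touched by only two endpoints. Since every hub may be assumed parity-broken (remark after Lemma \ref{lem: parity breaking}), Lemma \ref{lem: hub lemma}(ii) produces a $C_\ell$ in $G$ whenever $\ell\in[8k,\ 2(1-\eps)uk - k]$, where $u=\ell^{1-\eps}$. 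For $k\in[\ell^\eps,\ell/8]$ this range contains $[8\ell^\eps,\ 2(1-\eps)\ell - \ell^\eps]$, which in turn contains $\ell$ for small $\eps$ and large $\ell$; the contradiction follows.

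The main technical obstacle is the parameter trade-off: $k$ must exceed $\ell/(2u)\approx \ell^\eps/2$ so that the upper endpoint of the Lemma \ref{lem: hub lemma}(ii) interval clears $\ell$, yet must stay below $\ell/8$ so that the lower endpoint $8k$ does not overshoot. The lower bound on $k$ is automatic from the choice $d_1=\ell^\eps$; the upper bound is precisely what forces $C$ to be large in the hypothesis $\ell\geq C\log n/\log\log n$, ensuring that the Lemma \ref{lem: approx length cycle} error term $2\log_\pow N$ is a small fraction of $\ell$.
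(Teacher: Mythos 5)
Your proposal is correct and follows essentially the same route as the paper: contradiction, Lemma \ref{lem: approx length cycle} applied with $\pow=\ell^{1-5\eps}$ and $d_1=\ell^\eps$ to produce a cycle of length $\ell_1\in[\ell^\eps,\ell/8]$ in the auxiliary graph, interpretation of its edges as a submatching of $\mc{M}$ giving a handle system, and then Lemma \ref{lem: hub lemma}(ii) with parity-broken hubs to build a forbidden $\ell$-cycle. Your initial passage to a minimum-degree subgraph $H_1'$ via Proposition \ref{prop: moving to the k-core}(ii) is harmless but unnecessary, since Lemma \ref{lem: approx length cycle} only requires a bound on the average degree, which the paper uses directly on $H_1$.
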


\begin{proof}
For contradiction, suppose $d(H_1) \geq \ell^{1-3\eps}$.
We apply Lemma \ref{lem: approx length cycle} to $H_1$ 
with $\pow = \ell^{1-5\eps}$ and $d_1 = \ell^\eps$,
noting that $d(H_1) \ge 16\pow d_1$,
to find an $\ell_1$-cycle for some 
$\ell_1\in [d_1,d_1 + 2 \log _{\gamma }(N)] 
\subset [d_1, d_1 + \ell/10]$,
using $\ell \geq C \log n / \log \log n$.
Its edges correspond to a submatching $\mc{M}'$
of $\mc{M}$ of size $\ell$, which forms a handle system
for a set of $\ell$ hubs. As $8\ell_1 \le \ell \le u\ell_1$ 
and each hub is parity broken, Lemma \ref{lem: hub lemma} (ii)
gives an $\ell$-cycle, which is a contradiction.
\end{proof}

By Lemma \ref{lem: average degree control of H},
at most $\eps L$ vertices of $H_1$ have degree greater 
than $\eps^{-1}\ell ^{1-3\eps }$ in $H_1$. 
Let $H_2$ be obtained from $H_1$ by deleting these 
high degree vertices, so that $v(H_2) \ge (1-\eps)L$.
We will now restrict attention to the subgraph $H_3$ of $H_2$
where $ij \in E(H_3)$ iff $G[U_i,U_j]$ has a matching
of size $2\ell^{\eps }$. We show that $H_3$ 
does not have large components.

\begin{lem} \label{lem: connected component of H_2}
All connected components of $H_3$ have
fewer than $(1+2 \eps )\ell^{\eps }/2$ vertices.
\end{lem}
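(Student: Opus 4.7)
The plan is to derive a contradiction by constructing a $C_\ell$ in $G$. Suppose for contradiction that some component $C$ of $H_3$ has at least $(1+2\eps)\ell^\eps/2$ vertices; fix a spanning tree of $C$ and let $T_C$ be a subtree of it on exactly $k_0 := \lceil(1+2\eps)\ell^\eps/2\rceil$ vertices. The key idea is to use a closed Eulerian circuit on the multigraph obtained by doubling each edge of $T_C$: this circuit has length $2(k_0-1)$, visits each hub $v$ exactly $\deg_{T_C}(v)$ times, and I will fill each of its $2(k_0-1)$ steps by a length-$1$ handle from the matching $M(v,v')$ in $G[U_v,U_{v'}]$ corresponding to the traversed $T_C$-edge.

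To assign these handles consistently, I would process the $T_C$-edges in BFS order from an arbitrary root and, for each $M(v,v')$, greedily choose two distinct edges whose endpoints in $U_v$ are not used by previously-processed matchings at $v$. The endpoints in $U_{v'}$ are automatically fresh since $v'$ is being visited for the first time. Since each $v \in V(T_C)$ has $\deg_{T_C}(v) \le k_0 - 1 < \ell^\eps$, at most $2(\ell^\eps - 1)$ endpoints of $U_v$ have been used when $M(v,v')$ is processed, so at least two edges of $M(v,v')$ remain eligible. The resulting $2(k_0-1)$ matching edges, together with $\mc{H} := \{(A_v,B_v,D_v) : v \in V(T_C)\}$, form a handle system in which every handle has length $1$ and ${\ell}_{\total} = 2(k_0-1)$; the handle-system condition that each hub contains few entry points holds because each hub contains exactly $\deg_{T_C}(v) < \ell^\eps$ of them, which is much less than $u^{1-\eps}/2 = \ell^{(1-\eps)^2}/2$ for $\eps$ small and $\ell$ large.

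Since every hub is parity broken (by the Remark after Lemma \ref{lem: parity breaking}), Lemma \ref{lem: hub lemma}(ii) produces an $\ell$-cycle in $G$ whenever $\ell$ lies in
\[
[\,16(k_0-1),\; 2(1-\eps)u\,k_0 - 2(k_0-1)\,].
\]
The lower bound is at most $8(1+2\eps)\ell^\eps$, which is much smaller than $\ell$ since $\ell \ge C\log n/\log\log n$ with $C$ large. For the upper bound, using $u = \ell^{1-\eps}$ and $k_0 \ge (1+2\eps)\ell^\eps/2$ gives $2(1-\eps)u\,k_0 \ge (1-\eps)(1+2\eps)\ell = (1 + \eps - 2\eps^2)\ell$, and subtracting $2(k_0 - 1) \le (1+2\eps)\ell^\eps$ still leaves a quantity exceeding $\ell$ for small $\eps$ and large $\ell$. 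Hence $\ell$ lies in the range, producing the desired $C_\ell$ in $G$ and the required contradiction.

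The main technical point is the greedy matching-edge selection: the bound $\deg_{T_C}(v) \le k_0 - 1 < \ell^\eps$ is tight against the matching size $2\ell^\eps$, so the factor $(1+2\eps)/2 < 1$ in the definition of $k_0$ plays a dual role, providing just enough slack both for the greedy step (where we need $k_0 - 1 < \ell^\eps$ to keep the residual matching nonempty) and for fitting $\ell$ below the upper endpoint of the cycle-length range in Lemma \ref{lem: hub lemma}(ii) (which forces $2(1-\eps)u k_0 > \ell$, i.e.\ $k_0 > \ell^\eps/(2(1-\eps))$).
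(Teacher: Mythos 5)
Your proposal is correct and takes essentially the same route as the paper's proof: a tree of the prescribed size inside the component, a closed walk traversing each tree edge twice (your doubled-tree Euler circuit), two greedily chosen vertex-disjoint matching edges per tree edge serving as length-$1$ handles, and an application of Lemma \ref{lem: hub lemma}(ii) using that all hubs are parity broken, with the same cycle-length interval arithmetic. The only differences are presentational: you spell out the BFS-order greedy selection and the endpoint counts where the paper simply says the two edges per tree edge can be chosen greedily and the handles ordered along a closed walk using every edge of the tree exactly twice.
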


\begin{proof}
For contradiction, suppose $H_3$ contains a tree $T$ 
with $(1+2\eps )\ell^{\eps }/2$ vertices. 
By definition of $H_3$, we can greedily choose 
a matching $\mc{P} = \{ P_1,\dots,P_k\}$ that
contains two edges of $G[U_i,U_j]$ for each $ij \in E(T)$.
We can regard $\mc{P}$ as a handle system for the
hubs $\mc{H} = \{ (A_i,B_i,D_i): i \in V(T) \}$.
To see this, we note that condition (i) is vacuous,
and (iii) holds as $|\mc{P}|=2e(T) < \ell^\eps < u^{1-\eps}/2$.
To achieve (ii), we order the edges of $\mc{P}$ 
cyclically according to a closed walk in $T$ that uses 
every edge exactly twice (which is well-known to exist,
e.g.\ by embedding $T$ in the plane 
and walking around its outside). 
As $8|\mc{P}| \le \ell \le 2(1-\eps )u |\mc{P}| - |\mc{P}|$
and all hubs are parity broken, Lemma \ref{lem: hub lemma} (ii)
gives an $\ell$-cycle, which is a contradiction.
\end{proof}

\subsection{Proof of stability}

We now combine the results of this section to prove our stability result.

\begin{proof}[Proof of Lemma \ref{lem: almost clique decomp}]
Let the graphs $H_1$, $H_2$ and $H_3$ be as in the previous subsection.
Fix a maximal matching $M_{ij}$ in $G[U_i,U_j]$
for each $ij \in E(H_2) \sm E(H_3)$;
by definition of $H_3$ each $|M_{ij}| \le 2\ell^\eps$.
For each $i \in V(H_2)=V(H_3)$, 
let $U'_i = U_i \sm \bigcup_{ij} V(M_{ij})$;
by definition of $H_2$ each 
$|U'_i| \ge |U_i| - \eps^{-1}\ell ^{1-3\eps} \cdot 2\ell^\eps
\ge (1-\eps)2u$ for large $\ell$.
Let $U' = \bigcup \{ U'_i: i \in V(H_3) \}$ and $G'=G[U']$.
We have $|U'| \ge |V(H_3)| \cdot (1-\eps)2u
\ge (1-\eps)^2 2uL \ge (1-3\eps)N$,
so by Theorem \ref{thm: Turan} (Tur\'an's Theorem)
$d(G') \ge (1-4\eps )\ell$.\np

Note that all edges of $G'$ lie within some hub
or join two hubs in the same connected component of $H_3$.
By Lemma \ref{lem: connected component of H_2}
the number of vertices in any component of $G'$ is at most
$(1+2\eps )(\ell^{\eps }/2) \cdot (2 \ell ^{1-\eps }) 
= (1 + 2\eps )\ell$. Let $B$ be obtained from $U'$
by deleting $V(C)$ for any component $C$ of $G'$
with $d(C) \le (1-\eps^{1/2})\ell$. Then
$ |U'|(1-4\eps)\ell \le 2e(G') 
\le |B| (1+\eps)\ell + (|U'|-|B|) (1-\eps^{1/2})\ell$,
which gives 
$|B|(\eps^{1/2} + \eps ) \geq |U'|(\eps^{1/2} - 4\eps )\ell$,
and so $|B| \geq (1-6\eps^{1/2})|U'| \geq (1-7\eps^{1/2})N$.\np 

We conclude by taking subgraphs of high minimum degree
in each component of $G'[B]$. 
Letting $k = (1-\eta/2)\ell$, each such component $C$ has 
$e(C) = d(C)v(C)/2 \geq (1-\eps ^{1/2})\ell v(C)/2 
\geq \binom {k}{2} + (v(C)-k)(1-\eta /2)\ell$,
as  $(1+\eps )\ell \geq v(C) \geq d(C) \geq (1-\eps ^{1/2})\ell $ 
and $\eps \ll \eta $. Proposition \ref{prop: moving to the k-core} (i) 
gives a subgraph $C'$ of $C$ with
$\dD(C') \ge k \geq (1-\eta /2)\ell  \geq (1-3\eta /4)v(C)$.
We let $V_1,\ldots , V_s$ be the vertex-sets 
of these subgraphs $C'$ for all components $C$ of $G'[B]$. 
Then each $|V_i| \ge \dD(G[V_i]) \ge (1-\eta)\ell$ and
$\sum_{i=1}^s |V_i| \ge (1-3\eta /4)|B| \geq (1-\eta )N$.
Lastly, suppose for contradiction that some $|V_i| \geq \ell$. 
We may delete $|V_i| - \ell \leq 3\eps \ell $ vertices from $V_i$ 
and apply Theorem \ref{thm: dirac} (Dirac's Theorem)
to find an $\ell $-cycle in $G$. This contradiction
shows that all $|V_i| \leq \ell -1$.
\end{proof}

\section{The upper bound} \label{sec:pf}

In this section we will prove our main result,
Theorem \ref{thm: clique-vs-cycle}, which establishes
the upper bound on cycle-complete Ramsey numbers;
the proof will be given in the last subsection.
Most of this section will be occupied with
cleaning up the approximate structure of a supposed
counterexample, as provided by the stability result 
in the last section, until it becomes clear that 
its properties are contradictory, so it cannot exist. \np

Throughout the section we fix a graph $G$ and 
`approximate cliques' $V_1,\ldots, V_s$ satisfying 
the hypotheses and conclusions of 
Lemma \ref{lem: almost clique decomp}.
In the first subsection we give conditions
under which the approximate cliques can absorb
additional vertices from the remainder 
$R := V(G) \sm \bigcup_{i=1}^s V_i$,
while maintaining pancyclicity and also the property
that any pair of vertices can be connected by paths
with a large range of possible lengths.
In the second subsection we clean up $R$
by absorbing some of its vertices into
the approximate cliques.
In the third subsection we show that the
remaining part of $R$ can be separated from
most of the approximate cliques,
in the sense they have each have a large 
subset with no neighbours in $R$.
In the fourth subsection we show that one
of the approximate cliques has a vertex
that can absorb its neighbours.
This final property quickly leads to 
a contradiction, which will complete the proof.

\subsection{Absorbable paths}

In this subsection we consider the 
following set-up which is very similar
to the handle systems used for hubs.
Given a set of paths ${\cal P} = \{P_1,\ldots ,P_m\}$ 
in a graph $H$ and a set $V \sub V(H)$,
we say $\mc{P}$ is absorbable into $V$
if it consists of paths that are
vertex-disjoint and disjoint from $V$,
and there are distinct vertices 
$\{a_1,\ldots ,a_m, b_1,\ldots b_m\} \subset V$ 
such that $a_i$ is adjacent to one end of $P_i$
and $b_i$ is adjacent to the other end of $P_i$;
we say that $P_i$ attaches to $a_i$ and $b_i$.
The following lemma will be used to 
absorb paths into approximate cliques.\np
	
\begin{lem} \label{lem: cycle of specified length}
Let $H$ be a graph with a partition $V(H) = U \cup V$, 
where $\dD(H[V]) \geq 0.9|V|$ and $|U| \leq 0.1|V|$. 
Suppose that ${\cal P}$ is a set of paths of
length at most $2$ which is absorbable into $V$
and has $\cup_{P \in {\cal P}}V(P) = U$. Then 
\begin{enumerate}[(i)]
\item $H$ contains an $xy$-path of length $\ell$
for any distinct $x,y$ in $V(H)$
and $\ell  \in [6, 2v(H)/3]$,
\item $H$ is pancyclic.
\end{enumerate} 
\end{lem}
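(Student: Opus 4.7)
The plan is to view any cycle or path of length $\ell$ in $H$ as a skeleton inside $H[V]$ into which a subset $S\subseteq\mc{P}$ of absorbing paths has been inserted: substituting a ``virtual edge'' $a_ib_i$ in the skeleton by the detour $a_i\text{-}P_i\text{-}b_i$ contributes the $|V(P_i)|\in\{1,2,3\}$ vertices of $P_i$ and increases the length by $|V(P_i)|$. This yields a two-parameter family of achievable lengths controlled by (a) the choice of $S$, and (b) the skeleton length in $H[V]$. The hypothesis $\delta(H[V])\geq 0.9|V|$ provides the flexibility to realise the skeleton, while the granularity $|V(P_i)|\in\{1,2,3\}$ allows fine tuning of the total length.

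For (ii), I would first handle $\ell\in[3,|V|]$ by Theorem \ref{thm: bondy} applied to $H[V]$: the minimum degree $0.9|V|$ exceeds $|V|/2$ and rules out $H[V]$ being complete bipartite, so $H[V]$ is pancyclic. For $\ell\in[|V|+1,v(H)]$ I would greedily pick $S\subseteq\mc{P}$ so that $u:=\sum_{i\in S}|V(P_i)|$ makes $v:=\ell-u$ lie in an interval of the form $[2|S|+C_0,|V|]$ for a small constant $C_0$; since each $|V(P_i)|\in\{1,2,3\}$, the attainable values of $u$ cover $[0,|U|]$ up to small gaps, and for any $\ell$ in our range a valid choice exists. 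I would then find a cycle of length $v$ in $H[V]$ in which each pair $(a_i,b_i)$ appears consecutively, and substitute the detours through $P_i$.

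For (i) the set-up is parallel. If $x,y\in V$, I pick $S\subseteq\mc{P}$ and look for an $xy$-path in $H[V]$ of length $v=\ell-u$ containing all pairs $(a_i,b_i)$ consecutively. If $x\in U$, say $x\in V(P_{i_0})$, I prepend the portion of $P_{i_0}$ from $x$ through its adjacent endpoint together with the edge into $V$, reducing to an $x'y$-path problem with $x'\in V$; symmetrically for $y$. The hypothesis $\ell\geq 6$ accommodates up to two such $U$-to-$V$ transitions (each of length at most $3$), while $\ell\leq 2v(H)/3$ leaves enough room for the skeleton inside $V$.

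The main obstacle is the following internal sub-lemma for $H[V]$: given distinct vertices $a_1,b_1,\ldots,a_k,b_k$ (and, for (i), additional endpoints $x,y$) and a target length $v\in[2k+C_0,|V|]$, find a cycle or $xy$-path of length $v$ in $H[V]$ passing through every $a_i,b_i$ with each pair consecutive. I would prove this by choosing a vertex subset $W\subseteq V$ of size $v$ containing all the prescribed vertices; for $v\geq 0.2|V|$ we have $\delta(H[V][W])\geq v-0.1|V|\geq v/2$, so Theorem \ref{thm: dirac} produces a Hamilton cycle or path of $H[V][W]$, which I then modify by local rotations to bring each pair $(a_i,b_i)$ into consecutive position. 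The high minimum degree ensures that any pair of vertices in $H[V]$ has common neighbourhood of size at least $0.8|V|$, swamping the $k\leq|U|\leq 0.1|V|$ modifications needed; for the (small) regime $v<0.2|V|$ the problem reduces almost directly to linking the pairs $(a_i,b_i)$ by short paths using common neighbourhoods in $V\setminus\{a_j,b_j:j\neq i\}$.
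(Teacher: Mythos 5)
Your overall architecture runs parallel to the paper's: Bondy's theorem on $H[V]$ for cycle lengths up to $|V|$, a Dirac-type argument to absorb the paths of $\mc{P}$ for longer lengths, greedy minimum-degree arguments for the path statement, and a reduction of endpoints in $U$ to endpoints in $V$ via the attachments. (For (i) you could simply take $S=\es$, since $\ell \le 2v(H)/3 < |V|$ leaves the whole skeleton inside $V$; that part is fine.) The problem is the internal sub-lemma on which your treatment of $\ell>|V|$ in (ii) rests: you ask for a cycle of length $v$ \emph{in $H[V]$} passing through all the prescribed vertices ``with each pair $(a_i,b_i)$ consecutive''. Consecutive on a cycle of $H[V]$ means $a_ib_i\in E(H)$, and nothing guarantees this: the attachments of an absorbable path are just two distinct vertices of $V$, and $\dD(H[V])\ge 0.9|V|$ still allows any particular pair to be non-adjacent. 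So the sub-lemma as stated is false, and the proposed fix by ``local rotations'' cannot bring a non-adjacent pair into consecutive position. What you actually need is a cycle in $H[V]$ \emph{augmented by the virtual edges} $a_ib_i$ that uses every virtual edge (the detour through $P_i$ replaces the virtual edge, not a real one); your write-up conflates the two, and the rotation sketch does not explain how to force these prescribed (possibly non-)edges onto the cycle while keeping the length exactly $v$.

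The paper sidesteps this issue with a contraction trick: after removing the attachment set $S$, each path $P\in\mc{P}$ is replaced by a single auxiliary vertex $v_P$ joined to all \emph{common neighbours in $V'$ of the two attachments of $P$}; the resulting graph $H'$ still has minimum degree above $v(H')/2$, Dirac gives a Hamilton cycle, and expanding each $v_P$ back into $a_P$--$P$--$b_P$ yields the cycle of length exactly $\ell$ without ever requiring $a_Pb_P$ to be an edge or the pair to be consecutive in $H[V]$. Your approach is repairable in the same spirit (contract the pairs, or invoke a minimum-degree theorem guaranteeing a Hamilton cycle through a prescribed matching of added virtual edges, for which the degree margin here is ample), but as written the crucial step is asserted via a false statement, so there is a genuine gap.
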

	
\begin{proof}
For (i), we suppose first that both $x$ and $y$ are in $V$,
and show that there is an $xy$-path of length $\ell$
for any $\ell \in [2, 2v(H)/3]$. To see this, we use
$\dD(H[V]) \geq 0.9|V|$ to greedily choose an $xy'$-path $P$
of length $\ell-2$ in $H[V]$ that avoids $y$.
As $|N_{H[V]}(y) \cap N_{H[V]}(y')| - |V(P)|
\ge 0.8|V| - (2/3)1.1|V| > 0$ we can choose
a common neighbour of $y$ and $y'$ in $V \sm V(P)$,
and so obtain the required $xy$-path of length $\ell$.
Next we suppose that $x$ is in $V$ and $y$ is in $U$.
Then $y$ lies on a path $P \in \mc{P}$. Let $a$ and $b$
be the attachments of $P$, where without loss of generality 
$a \ne x$. The subpath of $P$ from $y$ to $a$ has
length $\ell' \le 3$. Adding a path of length $\ell-\ell'$
from $a$ to $x$ gives the required $xy$-path of length $\ell$.
Finally, suppose $x$ and $y$ are both in $U$.
Then we can find $a$ and $b$ in $V$ so that there is
an $xa$-path and $yb$-path that are vertex-disjoint
and both of length at most $2$. Adding an $ab$-path
of the appropriate length completes the proof of (i). \np

For (ii), we first note that by Theorem \ref{thm: bondy}
(Bondy's Theorem) $H[V]$ is pancyclic. It remains to show
there is an $\ell$-cycle whenever $|V| < \ell \le |V(H)|$.
Let $S$ be the set of attachments of $\mc{P}$, and
fix any $V' \sub V \sm S$ with $|V'| = \ell - |S| - |U|$.
As $|U|+|S| \le 3|U| \le 0.3|V|$, we have $|V'| \ge 0.7|V|$.
Let $H'$ be the graph obtained from $H[V']$
by adding a new vertex $v_P$ for each $P \in \mc{P}$,
which is joined to all common neighbours
in $V'$ of the attachments of $P$.
Note that $v(H')=|V'|+|\mc{P}|$ and 
$\dD(H') \ge |V'| - 0.2|V| \ge |V|/2 > v(H')/2$,
and so by Theorem \ref{thm: dirac} (Dirac's Theorem)
$H'$ has a Hamilton cycle. Replacing each $v_P$
by $P$ and the edges to its attachments
produces a cycle of length $\ell$ in $H$, as required.
\end{proof}

\subsection{Cleaning up the remainder}	

Here we clean up the remainder 
$R = V(G) \sm \bigcup_{i=1}^s V_i$
by absorbing some of its vertices into the approximate cliques, 
according to the following algorithm.
For each $i\in [s]$ we keep track of two sets during the algorithm: 
(a) a set $W_i = V_i \cup R_i$, where $R_i \sub R$ 
has been absorbed by $V_i$, and
(b) a subset $A_i$ of $V_i$, which is available 
for further attachments in the sense of the previous subsection.
We start with $W_i = A_i = V_i$ for each $i\in [s]$. In a given round: 
\begin{itemize}
\item Consider any path $P$ of length at most $2$ in $G[R]$
that attaches to some  distinct vertices $a$, $b$ in $A_i$ 
for some $i \in [s]$. If there is no such $P$ then stop.
Otherwise, move $V(P)$ from $R$ to $R_i$,
delete $a$ and $b$ from $A_i$, and proceed to the next round.
\end{itemize} 
We claim that the algorithm terminates
with $|W_i| \leq \ell -1$ for all $i\in [s]$.
Indeed, otherwise in some round some $|W_i| \in [\ell,\ell+2]$, 
as $W_i$ increments by at most $3$ vertices in each round. 
Then $W_i$ has a partition $W_i = V_i \cup R_i$, 
where $|V_i| \geq \dD(G[V_i]) \geq (1-\eta )|V_i| \ge 0.9|V_i|$ 
and $|R_i| \leq \eta \ell +2 \leq 0.1|V_i|$. 
By construction, $R_i$ is the union of paths $\mc{P}_i$ absorbable
into $V_i$, so Lemma \ref{lem: cycle of specified length} (ii)
gives an $\ell$-cycle in $G[W_i]$. 
This contradiction proves the claim.
We deduce $|R_i|=|W_i|-|V_i| < \eta \ell$.
Furthermore, each $A_i$ decreased by two vertices for each path 
added to $R_i$, so $|A_i| \ge |V_i| - 2|R_i| \ge (1-3\eta )\ell$.\np 

\subsection{Separating the remainder}

Now we show that the cleaned up remainder
$R := V(G) \sm \bigcup_{i=1}^s W_i$ can be separated 
from most of the approximate cliques, in the following sense.
For $i\in [s]$ let $A_i'$ be the set of $v \in A_i$
such that $v$ has a neighbour in $R$.
We partition $[s]$ as $S \cup T$,
where $T = \{i \in [s]: |A_i'| < \ell^{2/3}\}$.

\begin{lem} \label{T} $|T| \geq s/2$. \end{lem}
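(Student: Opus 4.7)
The plan is to argue by contradiction. Suppose $|S| > s/2$, so that more than half the approximate cliques contain at least $\ell^{2/3}$ vertices of $A_i$ with a neighbour in $R$; the goal is to build a $C_\ell$ in $G$, contradicting the hypothesis. The core construction finds two cliques $V_i, V_j \in S$ joined by two internally vertex-disjoint paths $P_1, P_2$ through $R$, and then closes these into an $\ell$-cycle using Lemma~\ref{lem: cycle of specified length}(i) applied inside $G[W_i]$ and $G[W_j]$. Since that lemma produces $xy$-paths of any length in $[6, 2(\ell-1)/3]$, for any total external length $P = |P_1|+|P_2| \le \ell - 12$ one can tune the internal lengths $q_1+q_2 = \ell - P$ so that the resulting cycle has length exactly $\ell$.

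To find such a pair, I would exploit the cleanup via double counting. Since the absorption algorithm of the previous subsection terminated, no $r \in R$ has two neighbours in the same $A_i$, so for each $i \in S$ distinct vertices of $A_i'$ have distinct neighbours in $R$ and $e(A_i', R) \ge |A_i'| \ge \ell^{2/3}$. Writing $E = \sum_{i \in S} e(A_i', R) \ge |S|\ell^{2/3}$, $d(r) = |\{i \in S : r \text{ is adjacent to } A_i'\}|$, and $c_{ij}$ for the number of common $R$-neighbours of $A_i'$ and $A_j'$, convexity gives $\sum_{r \in R} \binom{d(r)}{2} \ge |R|\binom{E/|R|}{2}$. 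If no pair has $c_{ij} \ge 2$ the left-hand side is at most $\binom{|S|}{2}$; combining with the available bounds $s \ge (1-\eta)(n-1)$ from Lemma~\ref{lem: almost clique decomp} and $|R| \le \eta N$ yields a trichotomy on $|R|$. Either $|R|$ is small enough that this bound is directly contradictory (giving the desired $c_{ij}\ge 2$), or $|R|$ is large enough that Tur\'an applied to $G[R]$---whose independence number inherits the bound $n-1$---forces substantial average degree inside $R$; in the latter case I would use BFS in $G[R]$, combined with the plentiful $A_i' \to R$ edges, to build longer vertex-disjoint external paths joining some pair $(V_i, V_j) \in S \times S$, which then close into an $\ell$-cycle as before.

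Once the pair $(V_i, V_j)$ and the paths $(P_1, P_2)$ are in hand, I would pick the attachment vertices $a_1,a_2 \in A_i$ and $b_1,b_2 \in A_j$, distinct by the cleanup injectivity, and apply Lemma~\ref{lem: cycle of specified length}(i) inside $G[W_i]$ and $G[W_j]$ to obtain internal paths of lengths $q_1, q_2$ with $q_1 + q_2 = \ell - |P_1| - |P_2|$. Vertex-disjointness of the internal paths from $P_1 \cup P_2$ is automatic since $R \cap W_i = R \cap W_j = \emptyset$, and the four pieces assemble into the desired $C_\ell$.

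The hard part is the middle paragraph: securing the connecting pair uniformly in $|R|$. When $|R|$ is small the pigeonhole/convexity argument gives $c_{ij} \ge 2$ directly; the delicate case is when $|R|$ is large, where the counting must be coupled with a Tur\'an-based analysis of $G[R]$ to produce longer external connectors, and the cleanup constraints on how $R$ attaches to $\bigcup A_i$ have to be woven in to ensure the external paths land on distinct attachment vertices and can be chosen internally disjoint.
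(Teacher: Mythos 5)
Your overall plan---contradict by turning many ``$A_i'$ large'' cliques plus connectors through $R$ into a $C_\ell$ via Lemma~\ref{lem: cycle of specified length}(i)---matches the paper's strategy, but the core of your argument has a genuine gap: you try to force a \emph{single pair} of cliques joined by two disjoint connectors through $R$, and this structure simply need not exist. Quantitatively, your convexity count needs roughly $E^2/|R| > \binom{|S|}{2}$ with $E \approx s\ell^{2/3}$ and $s \approx n$, which only succeeds when $|R| = O(\ell^{4/3})$; your Tur\'an fallback on $G[R]$ only gives nontrivial average degree when $|R| \gg n$. Since $\ell$ may be as small as $C\log n/\log\log n$, there is a huge middle regime, e.g.\ $R$ a (near-)independent set of size $\Theta(n)$ (allowed, as $\aA(G)\le n-1$), where $G[R]$ has no paths to serve as ``longer external connectors'' and where the cliques-vs-$R$ incidence structure can be $C_4$-free (its average degree is only $\mathrm{poly}(\ell)\ll\sqrt{n}$), so no pair of cliques with two disjoint connectors is forced. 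In other words, the hard case is not ``$|R|$ large'' but the fact that the number of cliques $s\approx n$ dwarfs any polynomial in $\ell$, so the auxiliary graph can have girth of order $\log n/\log\log n$, i.e.\ comparable to $\ell$ itself.

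The paper's proof resolves exactly this: it partitions $R$ into pieces $U_j$ of diameter at most $2$ (removing stars and using Tur\'an plus $\aA(G)<n$ to bound the leftover), forms the bipartite auxiliary graph between the $W_i$ and the $U_j$ (simple precisely by the termination of the absorption algorithm), and then---this is the step your proposal is missing---applies Lemma~\ref{lem: approx length cycle} to find a cycle in that auxiliary graph whose length is \emph{controlled}, lying in $[\ell^{1/14},\ell^{1/14}+2\log_{\ell^{1/14}}v(H)]\subset[4,\ell/8]$ (this is where $\ell\ge C\log n/\log\log n$ is used). That cycle may pass through many cliques, not two; it is then inflated to an $\ell$-cycle by choosing internal path lengths in each $W_{i_j}$ via Lemma~\ref{lem: cycle of specified length}(i). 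A further small error: distinctness of your attachment vertices does not follow from ``cleanup injectivity''---the termination rule forbids one vertex of $R$ having two neighbours in $A_i$, but two distinct vertices of $R$ may share their unique neighbour in $A_i$; the paper gets distinct endpoints $a_j\neq b_j$ by tracking a chosen $R$-neighbour $u_v$ for each $v\in A_i'$ and working at the level of the diameter-$2$ pieces.
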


\begin{proof}
We start by constructing a partition $R = U_1 \cup \cdots \cup U_r$, 
where each $G[U_j]$ has diameter at most $2$ and $r\leq 2N\ell ^{-1/2}$. 
To see that this is possible, we repeatedly remove stars from $R$ 
of order $\ell^{1/2}$ until none remain. 
We can remove at most $\eta N/\ell^{1/2}$ such stars.
The remaining set $R'$ must have $d(G[R']) < \ell ^{1/2}-1$. 
By Theorem \ref{thm: Turan} (Tur\'an's Theorem) 
$|R'|/\ell^{1/2} \leq \aA(G[R']) \le \aA (G) < n$, so 
$|R'| < n \ell^{1/2} < \tfrac{3}{2} N \ell ^{-1/2}$. 
We let the parts $U_1,\dots,U_r$ consist of all removed stars 
and singleton parts for each vertex of $R'$.
Then $r\leq 2N\ell ^{-1/2}$, as required.\np 

Now suppose for contradiction that $|T|<s/2$, so $|S|>s/2$.
For each $v \in \bigcup_{i=1}^s A'_i$ we fix any $u_v \in N(v) \cap R$.
We consider an auxiliary bipartite graph $H$ with parts 
$A = \{W_i\}_{i\in [s]}$ and $B = \{U_j\}_{j\in [r]}$,
where we add an edge from $W_i$ to $U_j$
for each $v \in A'_i$ with $u_v \in U_j$.
To see that this gives a (simple) graph we use the termination
condition of the algorithm in the previous subsection:
there cannot be distinct $v_1,v_2 \in A'_i$ with neighbours $u_1,u_2 \in U_j$, 
as $U_j$ has diameter at most $2$, so we would have a $u_1u_2$-path 
of length at most $2$ attaching to $A_i$.\np

We will obtain a contradiction by finding a short cycle in $H$
and using it to construct an $\ell$-cycle in $G$.
We have $v(H) = s + r \leq 2N \ell ^{-1} + 2N \ell ^{-1/2} \leq 4N\ell ^{-1/2}$
and $e(H) = \sum_{i\in [s]} |A_i'| \ge |S|\ell ^{2/3}
> \tfrac{1}{2} \tfrac{N}{2\ell} \ell ^{2/3}
\ge \ell ^{1/6}v(H)/16$, so $d(H) > \ell^{1/6}/8$.
As $\ell \ge C\frac {\log n}{\log \log n}$,
we can apply Lemma \ref{lem: approx length cycle}
with $\pow = d_1 = \ell ^{1/14}$ to find
a cycle in $H$ with length in
$[\ell ^{1/14}, \ell ^{1/14} + 2 \log _{\ell ^{1/14}}(v(H))] 
\subset [4, \ell/8]$. \np

As $H$ is bipartite, we can write this cycle as
$W_{i_1}U_{i_1}W_{i_2}\cdots W_{i_L}U_{i_L}W_{i_1}$,
for some $2 \le L \le \ell/16$.
Each $U_{i_j}$ has diameter at most $2$,
so by construction of $H$ there is a path $Q_j$
of length at most~$4$, starting with the edge
$b_j u_{b_j}$ for some $b_j \in W_{i_j}$
and ending with the edge $u_{a_{j+1}} a_{j+1}$
for some $a_{j+1} \in W_{i_{j+1}}$.
Furthermore, $a_j, b_j \in W_{i_j}$ are distinct,
as $u_{a_j} \ne u_{b_j}$. We fix 
$\ell_j \in [2, \ell/2]$ for each $j \in [L]$ with
$\sum_{j\in [L]} \ell _j = \ell - \sum_{j\in [L]} e(Q_j)$.
and apply Lemma \ref{lem: cycle of specified length} (i)
to choose $a_jb_j$-paths $P_j$ in $W_j$ of length $\ell_j$. 
Combining these with the paths $Q_j$
produces an $\ell$-cycle, which is a contradiction.
\end{proof}

\subsection{Absorbing neighbours}

Now we will show that one of the approximate cliques 
has a vertex that can absorb its neighbours.
To do so, we now analyse the edges
crossing between the approximate cliques.
For each $i \in T$ let $B_i = A_i \sm A'_i$
denote the set of $v \in A_i$ with no neighbour in $R$.
By definition of $T$ each 
$|B_i| \geq |A_i| - \ell ^{2/3} \geq 2|V_i|/3$. 
For each $i\in T$ we consider a matching ${\cal M}_i$ in $G$ 
of maximum size subject to the condition that each edge 
of ${\cal M}_i$ intersects $W_i$ in a single vertex from $B_i$.
We will show that these matchings cannot all be large.

\begin{lem} \label{smallMi}
There is $i^* \in T$ with
$|{\cal M}_{i^*}| \leq \ell ^{1/3}$.
\end{lem}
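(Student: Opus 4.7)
I would argue by contradiction: assume $|\mathcal{M}_i|>\ell^{1/3}$ for every $i\in T$ and construct a $C_\ell$ in $G$. The first observation is that every edge of $\mathcal{M}_i$ has one endpoint in $B_i\subseteq V_i$ and its other endpoint in some $R_j=W_j\setminus V_j$ with $j\ne i$: this is because $B_i$ has no neighbour in the current remainder $R$ (by definition) and Lemma~\ref{lem: almost clique decomp}(iv) forbids edges between different $V_j$'s. Since the $B_i$'s lie in disjoint $V_i$'s, the matchings $\{\mathcal{M}_i\}_{i\in T}$ are pairwise edge-disjoint, so writing $c_{ij}$ for the total number of these matching edges between $W_i$ and $W_j$ gives $\sum c_{ij}>|T|\ell^{1/3}\ge s\ell^{1/3}/2$. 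I then split into two cases.

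In \emph{Case 1}, some $c_{ij}\ge\ell^{1/4}$; after relabelling, $\mathcal{M}_i$ contains $\ge\ell^{1/4}/2$ edges $(u_k,v_k)$ with $u_k\in B_i$ and $v_k\in R_j$. As each absorbable path in $R_j$ has at most $3$ vertices, these edges hit $\ge\ell^{1/4}/6$ distinct absorbable paths, whose attachment pairs in $V_j$ are pairwise disjoint (since the absorption algorithm deletes attachments from $A_j$). I pick two such edges $(u_1,v_1),(u_2,v_2)$ on different absorbable paths $P_1,P_2$; letting $a_1\in V_j$ be the attachment of $P_1$ nearer $v_1$ and $b_2\in V_j$ that of $P_2$ nearer $v_2$, we have $a_1\ne b_2$. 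I then assemble a cycle by concatenating: an $a_1b_2$-path of length $L_j$ in $V_j$ (by Lemma~\ref{lem: cycle of specified length}(i) applied to $G[V_j]$); the sub-path $b_2\to v_2$ inside $P_2$ (length $\le3$); the edge $v_2u_2$; a $u_2u_1$-path of length $L_i$ in $V_i$ (by Lemma~\ref{lem: cycle of specified length}(i) applied to $G[V_i]$); the edge $u_1v_1$; and the sub-path $v_1\to a_1$ inside $P_1$ (length $\le3$). Since both $L_i,L_j$ can range in $[6,2(1-\eta)\ell/3]$, I can solve $L_i+L_j=\ell-2-\lambda_1-\lambda_2$ (with $\lambda_l\le3$) to obtain a $C_\ell$.

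In \emph{Case 2}, every $c_{ij}<\ell^{1/4}$; let $F$ be the simple graph on $[s]$ with $ij\in F$ iff $c_{ij}\ge1$. Then $e(F)>|T|\ell^{1/12}/2\ge s\ell^{1/12}/4$, giving $d(F)\ge\ell^{1/12}/2$. I apply Lemma~\ref{lem: approx length cycle} with $\gamma=d_1=\ell^{1/40}$ to find a cycle $C=i_1\cdots i_k$ in $F$ of length $k\le\ell/40$ (using $\log s=O(\log N)\le\ell/100$ for $\ell\ge C\log n/\log\log n$ with $C$ large). For each edge $i_li_{l+1}$ of $C$ I fix a witnessing matching edge $e_l\in G$, then apply Lemma~\ref{lem: cycle of specified length}(i) inside each $G[W_{i_l}]$ (with $V=V_{i_l}$, $U=R_{i_l}$, and $\mathcal{P}$ the absorbable paths in $R_{i_l}$) to build a path of length $L_l\in[6,2|W_{i_l}|/3]$ between the endpoints of $e_{l-1}$ and $e_l$ in $W_{i_l}$. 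Choosing $\sum_l L_l=\ell-k$ is feasible because $k\le\ell/40$ forces each $L_l\approx\ell/k$ to lie well inside the allowed range, yielding a $C_\ell$.

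The hard part will be managing the tedious details of Case~2: ensuring that the endpoints of $e_{l-1}$ and $e_l$ in $W_{i_l}$ are distinct (as required by Lemma~\ref{lem: cycle of specified length}(i)), and that matching edges originating from different $\mathcal{M}_k$'s can be picked without vertex collisions along the cycle. Both issues are resolved by the abundance of choices afforded in Case~2 (multiple witnessing matching edges for each edge of $F$, and ample flexibility in selecting the cycle $C$), combined with the fact that $k\ll\ell$ makes all the length constraints slack.
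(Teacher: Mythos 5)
Your overall strategy is sound and reaches the same contradiction as the paper, but by a genuinely different mechanism. The paper takes a uniformly random bipartition $[s]=S_1\cup S_2$, keeps only "good" matching edges, and builds a bipartite auxiliary graph on $\{W_i\}$; simplicity of that graph is forced by the observation that \emph{two} edges of the same $\mathcal{M}_i$ into the same $W_j$ already yield an $\ell$-cycle, via a $b_1b_2$-path of length $\lfloor \ell/2\rfloor-1$ in $G[W_i]$ and a $c_1c_2$-path of length $\lceil \ell/2\rceil-1$ in $G[W_j]$ from Lemma~\ref{lem: cycle of specified length}(i). Your heavy/light dichotomy at threshold $\ell^{1/4}$ replaces both the randomisation and the simplicity claim: the heavy case is killed directly, and in the light case collapsing to the simple graph $F$ costs only a factor $\ell^{1/4}$ in density ($\ell^{1/3}\to\ell^{1/12}$), which is still ample for Lemma~\ref{lem: approx length cycle}. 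This is a legitimate trade. Two remarks: your opening observation that the second endpoint of each matching edge lies in $R_j$ (via Lemma~\ref{lem: almost clique decomp}(iv)) is correct but not needed, and your Case~1 is more elaborate than necessary -- since Lemma~\ref{lem: cycle of specified length}(i) connects \emph{any} two distinct vertices of $W_j$ (including vertices of $R_j$), two edges of the same matching into the same $W_j$ already suffice, with no need to restrict to distinct absorbable paths and argue about attachments.

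The one flawed point is your stated resolution of the endpoint-collision issue in Case~2. Case~2 guarantees only $c_{ij}\ge 1$, so an edge of $F$ may have a \emph{unique} witnessing matching edge, and Lemma~\ref{lem: approx length cycle} hands you one cycle with no freedom of choice; so neither "multiple witnesses" nor "flexibility in selecting $C$" is actually available. The collision can genuinely occur: if $e_{l-1}\in\mathcal{M}_{i_{l-1}}$ and $e_{l}\in\mathcal{M}_{i_{l+1}}$ both enter $W_{i_l}$ through $R_{i_l}$, they may share that endpoint, since distinct matchings $\mathcal{M}_k$ need not be vertex-disjoint from one another. Fortunately the repair is a one-liner, and it is exactly what the paper does in its final construction ("whether or not these coincide"): if the two endpoints in $W_{i_l}$ coincide, simply route the $\ell$-cycle straight through that shared vertex, contributing length $0$ inside $W_{i_l}$; the length bookkeeping still closes because $k\le \ell/40$ while each remaining $W_{i_m}$ can absorb any length in $[6,2|W_{i_m}|/3]$, so $\sum_l L_l=\ell-k$ remains achievable. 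With that substitution for your "abundance of choices" sentence, your argument is complete.
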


Before giving the proof, we show how this lemma allows us 
to find a vertex that can absorb its neighbours.
Recall that $W_{i^*} = V_{i^*} \cup R_{i^*}$ and 
$R_{i^*}$ is a union of vertex-disjoint paths $\mc{P}_{i^*}$
that is absorbable into $V_{i^*}$.

\begin{lem} \label{absorbN}
There is $v \in B_{i^*}$ such that for any
neighbours $y_1,\dots,y_k$ of $v$ in $\ov{W_{i^*}}$
with $k \le \ell - |W_{i^*}|$, letting $\mc{P}'_{i^*}$
be obtained from $\mc{P}_{i^*}$ by adding each $y_i$
as a path of length $0$, we have $\mc{P}'_{i^*}$
absorbable into $V_{i^*}$. 
\end{lem}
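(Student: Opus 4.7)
The plan is to select $v \in B_{i^*}$ unmatched by the maximum matching $\mc{M}_{i^*}$, and then verify the absorption property of $\mc{P}'_{i^*}$ via Hall's marriage theorem.

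\textbf{Choice of $v$ and bound on $k$.} Since $|B_{i^*}| \ge 2|V_{i^*}|/3 \ge (2/3)(1-\eta)\ell$ greatly exceeds $|V(\mc{M}_{i^*}) \cap B_{i^*}| \le |\mc{M}_{i^*}| \le \ell^{1/3}$, one can pick $v \in B_{i^*} \setminus V(\mc{M}_{i^*})$. For any such $v$, maximality of $\mc{M}_{i^*}$ forces every neighbour of $v$ in $\ov{W_{i^*}}$ to lie in $Y^* := V(\mc{M}_{i^*}) \cap \ov{W_{i^*}}$, else the edge $vy$ could be added to $\mc{M}_{i^*}$. Hence $|N_G(v) \cap \ov{W_{i^*}}| \le |Y^*| \le \ell^{1/3}$. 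Combining $\dD(G) \ge \ell - 1$ with $d_{V_{i^*}}(v) \le |V_{i^*}| - 1$, $d_{R_{i^*}}(v) \le |R_{i^*}|$, and $N_G(v) \cap R = \es$ yields $d_{\ov{W_{i^*}}}(v) \ge \ell - |W_{i^*}|$, so $|W_{i^*}| \ge \ell - \ell^{1/3}$ and every admissible $k$ satisfies $k \le \ell^{1/3}$.

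\textbf{Hall reduction.} I would retain the existing absorption of $\mc{P}_{i^*}$, whose attachments form a set $A^{\mathrm{used}} \subset V_{i^*}$ with $|A^{\mathrm{used}}| = 2|\mc{P}_{i^*}| \le 2\eta\ell$, and then look for two distinct new attachments per $y_i$ inside $V^* := V_{i^*} \setminus A^{\mathrm{used}}$. By Hall's theorem (with demand $2$ per length-$0$ path), absorbability of $\mc{P}'_{i^*}$ reduces to $|N_G(Y') \cap V^*| \ge 2|Y'|$ for every $Y' \subseteq \{y_1,\ldots,y_k\}$. This condition is satisfied if each $y \in N_G(v) \cap \ov{W_{i^*}}$ has $d_{V_{i^*}}(y) \ge 3\eta\ell$: for then $|N_G(y) \cap V^*| \ge \eta\ell$ for any single $y \in Y'$, which dominates $2|Y'| \le 2\ell^{1/3}$.

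\textbf{Main obstacle.} To secure the degree condition, I would further restrict $v$ to avoid $Y^*_{\mathrm{bad}} := \{y \in Y^* : d_{V_{i^*}}(y) < 3\eta\ell\}$. Each $y \in Y^*_{\mathrm{bad}}$ forbids at most $3\eta\ell$ vertices of $B_{i^*}$, giving a forbidden set of size at most $3\eta\ell\,|Y^*_{\mathrm{bad}}|$. The na\"ive bound $|Y^*_{\mathrm{bad}}| \le |Y^*| \le \ell^{1/3}$ yields only $3\eta\ell^{4/3}$, which may exceed $|B_{i^*}|$; the crux is therefore to show $|Y^*_{\mathrm{bad}}|$ is in fact of order $1/\eta$, a constant. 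I expect this to follow by leveraging the $C_\ell$-free assumption: many $y$'s with sparse connections to $V_{i^*}$ would, via the matched edges of $\mc{M}_{i^*}$ together with the parity-broken hubs and the handle lemma (Lemma \ref{lem: hub lemma}), let us construct an $\ell$-cycle in $G$, contradicting the standing hypothesis. Once $|Y^*_{\mathrm{bad}}|$ is thus controlled, the refined $v$ exists and Hall's theorem immediately gives the absorbability.
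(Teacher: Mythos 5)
Your opening moves are sound: choosing $v \in B_{i^*}$ unmatched by $\mc{M}_{i^*}$ does force all of its neighbours in $\ov{W_{i^*}}$ into $V(\mc{M}_{i^*})$, so $k \le |\mc{M}_{i^*}| \le \ell^{1/3}$ (the detour through $\dD(G)\ge \ell-1$ is not needed for this), and the reduction of absorbability to a Hall-type condition with demand $2$ per new length-$0$ path is correct. But the argument stops exactly at the essential difficulty, which you yourself label the `main obstacle': you need every $\ov{W_{i^*}}$-neighbour of $v$ to have many available attachments, and your proposed fix --- that $|Y^*_{\mathrm{bad}}|=O(1/\eta)$ ``should follow'' from $C_\ell$-freeness via the hub/handle machinery --- is not proved and is not a plausible route. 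The hubs and handle systems belong to the stability analysis of Section 5 and are no longer part of the standing structure in Section 6; more importantly, a vertex $y$ having \emph{few} neighbours in $V_{i^*}$ provides no raw material for building an $\ell$-cycle, and nothing in the hypotheses bounds the number of such $y$ by a constant: the only control available is the matching bound $|\mc{M}_{i^*}|\le \ell^{1/3}$, so there can be up to $\ell^{1/3}$ bad neighbours, and if their degrees into $B_{i^*}$ are close to $3\eta\ell$ their neighbourhoods could a priori blanket all of $B_{i^*}$, leaving no admissible choice of $v$. So the proof is incomplete at its crux.

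The paper resolves this point with a different, dynamic argument. Starting from $D_{i^*}=B_{i^*}$, it repeatedly takes any $x\in\bigcup_{j\ne i^*}W_j$ with $1\le d_G(x,D_{i^*})\le 2\ell^{1/3}$ and deletes $N_G(x)\cap D_{i^*}$ from $D_{i^*}$. The neighbourhoods deleted at successive steps are disjoint, so each pruning vertex $x$ carries a private neighbour in $B_{i^*}$; these private edges form a matching of the type restricted to, so the number of pruning steps is at most $|\mc{M}_{i^*}|\le\ell^{1/3}$ by maximality, hence at most $2\ell^{2/3}$ vertices are removed and $D_{i^*}\neq\es$. Any $v\in D_{i^*}$ then works: each of its neighbours $y_i\in\ov{W_{i^*}}$ was never a pruning vertex, so it has more than $2\ell^{1/3}\ge 2k$ neighbours in the final $D_{i^*}$, and one greedily picks two fresh attachments per $y_i$ (no Hall argument is needed). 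The idea you are missing is to measure badness against the dynamically shrinking candidate set and to charge each bad external vertex to an edge of a matching, rather than trying to bound the number of external vertices with small degree into the static set $V_{i^*}$.
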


\begin{proof}
We apply the following algorithm to construct a set $D_{i^*} \sub B_{i^*}$ 
such that every vertex in $B_{i^*}$ has the stated property.
We start with $D_{i^*}=B_{i^*}$ and $X=\es$.
While there is $x \in \bigcup _{j\neq i^*} W_j$ 
with $1 \leq d_G(x,D_{i^*}) \leq 2 \ell ^{1/3}$
we add $x$ to $X$ and delete $N_G(x) \cap D_{i^*}$ from $D_{i^*}$.
This process terminates with a set $D_{i^*}$ such that  
$d_G(x,D_{i^*}) = 0$ or $d_G(x,D_{i^*}) > 2 \ell ^{1/3}$ 
for all $x \in ( \bigcup _{j\neq i}W_j ) \sm X$.
Each $x \in X$ has a private neighbour in $B_{i^*}$,
so by choice of $\mc{M}_{i^*}$ we have
$|X| \leq |{\cal M}_{i^*}| \leq \ell ^{1/3}$, and so 
$|D_{i^*}| \geq |B_{i^*}| - (2\ell ^{1/3})|X| \geq \ell /2 > 0$.\np

Consider any $v \in D_{i^*}$ and neighbours $y_1,\dots,y_k$ 
of $v$ in $\ov{W_{i^*}}$ with $k \le \ell - |W_{i^*}|$.
Each $y_i$ is not in $X$ (otherwise we would have
deleted $v$ from $D_{i^*}$) so has at least $2 \ell ^{1/3}$ 
neighbours in $D_{i^*}$. This implies 
$k \le |\mc{M}_{i^*}| \le \ell ^{1/3}$,
or otherwise we could greedily construct
a matching of size $|\mc{M}_{i^*}|+1$ between
$\{y_1,\dots,y_k\}$ and $B_{i^*}$, which is
contrary to the choice of $\mc{M}_{i^*}$. 
We can therefore greedily choose two
attachments for each $y_i$ in $D_{i^*}$,
which are distinct from each other,
and distinct from the attachments of $\mc{P}_{i^*}$
as $D_{i^*} \sub B_{i^*} \sub A_{i^*}$. 
Thus $\mc{P}'_{i^*}$ is absorbable into $V_{i^*}$. 
\end{proof}

We conclude this subsection by returning 
to the proof of Lemma \ref{smallMi}.

\begin{proof}[Proof of Lemma \ref{smallMi}]
For contradiction, suppose $|{\cal M}_i|>\ell ^{1/3}$ for all $i \in T$.
Note that every edge in ${\cal M}_i$ has one end in $B_i$
and the other end in $\cup_{j \ne i} W_j$
(it is not in $R$ by definition of $B_i$).
Consider a uniformly random partition $[s] = S_1 \cup S_2$.
Say that $bc \in {\cal M}_i$ with $b \in B_i$ and $c\in W_j$ 
is \emph{good} if $i\in S_1$ and $j\in S_2$. 
Each edge is good with probability $1/4$, so we can fix a partition 
so that the number of good edges is at least
$\tfrac{1}{4}\sum_{i \in T} |\mc{M}_i|
> |T| \ell ^{1/3} /4 \ge s \ell ^{1/3}/8$. \np

Consider the auxiliary bipartite graph $H$ with parts 
$A = \{W_i\}_{i\in S_1}$ and $B = \{W_j\}_{j\in S_2}$,
where we add an edge from $W_i \in A$ to $W_j \in B$ for each 
good edge $bc \in {\cal M}_i$ with $b \in B_i$ and $c\in W_j$.
We claim that $H$ is a (simple) graph.
To see this, suppose on the contrary we have
$b_{1}c_{1}$ and $b_{2}c_{2}$ in ${\cal M}_i$
with $\{c_1,c_2\} \sub W_j$.
By Lemma \ref{lem: cycle of specified length} (i) there is 
a $b_{1}b_{2}$-path $P_1$ in $G[W_i]$ of length $\bfl{\ell/2}-1$
and a $c_{1}c_{2}$-path $P_2$ in $G[W_j]$ of length $\bcl{\ell/2}-1$.
Combining the paths $P_1$ and $P_2$ with the edges $b_{1}c_{1}$ 
and $b_{2}c_{2}$ gives a $\ell $-cycle.
This contradiction proves the claim. \np

We deduce $e(H) \ge s \ell ^{1/3}/8 = v(H) \ell ^{1/3}/8$,
so $d(H) \ge \ell ^{1/3}/4$. We use this to obtain the required
contradiction by finding a short cycle in $H$, and so an $\ell$-cycle 
in $G$. This part of the proof is very similar to that of Lemma \ref{T}.
Lemma \ref{lem: approx length cycle} provides an even cycle
$W_{i_1}W_{j_1}\cdots W_{i_L}W_{j_L}W_{i_1}$,
for some $2 \le L \le \ell/16$,
where each $i_\aA \in S_1$ and $j_\aA \in S_2$.
By definition of $H$, for each $\aA \in [L]$
there are edges $a_\aA x_\aA$ and $b_\aA y_\aA$ 
in ${\cal M}_{i_\aA}$ with $\{a_\aA,b_\aA\} \sub W_{i_\aA}$,
$x_\aA \in W_{j_\aA-1}$ and $y_\aA \in W_{j_\aA}$.  
By Lemma \ref{lem: cycle of specified length} (i)
there is a path $Q_\aA$ of length at most $4$ 
from $b_{\aA-1}$ to $a_\aA$ through $W_{j_\aA-1}$
via $y_{\aA-1}$ and $x_\aA$ (whether or not these coincide).
We fix $\ell_\aA \in [2, \ell/2]$ for each $\aA \in [L]$ with
$\sum_{\aA \in [L]} \ell_\aA = \ell - \sum_{\aA\in [L]} e(Q_\aA)$.
and apply Lemma \ref{lem: cycle of specified length} (i)
to choose $a_\aA b_\aA$-paths $P_\aA$ in $W_{i_\aA}$ 
of length $\ell_\aA$. Combining these with the paths $Q_\aA$
produces an $\ell$-cycle, and so the required contradiction.
\end{proof}

\subsection{Proof of Theorem \ref{thm: clique-vs-cycle}}	
	
We now complete the proof of our main theorem.
	
\begin{proof}[Proof of Theorem \ref{thm: clique-vs-cycle}]
We fix $\ell \in \mb{N}$ and prove the following statement (*)
by induction on $n \ge 1$ such that if $n \ge 3$
we have $\ell \geq C\frac {\log n}{\log \log n}$
(for some large absolute constant $C$):\np

(*) there is no $C_\ell$-free graph $G$ 
with $v(G) = N = (\ell -1)(n-1)+1$ and $\aA(G) \leq n-1$.\np

The case $n=1$ holds as every graph $G$ with $v(G) \geq 1$ 
has an independent set of order $1$.
The case $n=2$ holds as every graph $G$ with $v(G)\geq \ell$ 
contains an independent set of order $2$ or a clique of order $\ell$.

Now we give the induction step for $n \ge 3$.
For contradiction, suppose we have a 
$C_\ell$-free graph~$G$ 
with $v(G) = N = (\ell -1)(n-1)+1$ and $\aA(G) \leq n-1$.\np

If there is any vertex $v$ of degree less than $\ell-1$
we delete $N(v) \cup \{v\}$ from $G$ and apply induction.
The remaining subgraph $G_1$ satisfies 
$v(G_1) \geq (\ell-1)(n-2)+1 = r(C_{\ell },K_{n-1})$ by induction, 
so it contains a cycle $C$ of length $\ell$ or an independent set $I$ 
of order $n-1$. Then $G$ contains an $\ell$-cycle $C$
or $\{v\} \cup I$ forms an independent set of order $n$. 
Thus we may assume $\dD(G) \geq \ell-1$.\np

We let $V_1,\dots,V_s$ be the approximate cliques provided
by the stability result (Lemma \ref{lem: almost clique decomp}),
let $W_i = V_i \cup R_i$ for $i \in [s]$ be the enlarged
approximate cliques obtained in the previous section
by absorbing part of the remainder,
and let $v \in B_{i^*}$ be given by Lemma \ref{absorbN}.
As $v$ has at least $\ell-1$ neighbours, we can choose
neighbours $y_1,\dots,y_k$ of $v$ in $\ov{W_{i^*}}$
with $k = \ell - |W_{i^*}|$. As the path system $\mc{P}'_{i^*}$
in Lemma \ref{absorbN} is absorbable,
Lemma \ref{lem: cycle of specified length} gives a cycle
of length $|V_i|+|R_i|+k = \ell$ in $G$.
This gives a contradiction and completes the proof of the theorem.
\end{proof}	

\section{Concluding remarks}

Our results answer the questions of Erd\H{o}s et al. \cite{EFRS} up to a constant 
factor, which we did not compute explicitly, although with more work it 
seems that a reasonable value (less than $20$, say) can be obtained. It 
would be interesting to obtain an asymptotic formula for the $\ell$ 
minimising $r(C_{\ell }, K_n)$.
The constructions for the lower bound  on $r(C_{\ell }, K_n)$ avoid a 
range of cycles. For large $\ell$, this range consists of all cycles of 
length at least $\ell$, and for small $\ell$, it consists of all cycles 
of length at most $\ell$. This suggests that the finer nature of the 
threshold may be connected to the problem of improving the Moore bound
(see \cite{MS}) on the number of edges in a graph of given order and 
diameter.\np

The problem of obtaining good estimates on $r(C_{\ell }, K_n)$ 
for small $\ell>3$ remains widely open. The most significant gap
in the current state of knowledge is the case $\ell=4$,
for which the known bounds (see \cite{C-L-R-Z, Spen-CCRN}) 
are $c (n/\log n)^{3/2} \le r(C_4,K_n) \le C(n/\log n)^2$
for some constants $c$ and $C$.\np

\noi \textbf{Acknowledgement.}
The authors would like to thank the organisers of 
the Workshop on Extremal and Structural Combinatorics 
held at IMPA in Rio de Janeiro, where this work began.


\begin{thebibliography}{99}

\bibitem{ABS} P. Allen, G. Brightwell, J. Skokan,
Ramsey-goodness -- and otherwise,
\textit{Combinatorica} 33:125--160, 2013.

\bibitem{A} N. Alon, 
Subdivided graphs have linear Ramsey numbers, 
\textit{J. Graph Theory} 18:343--347, 1994.

\bibitem{AS} N. Alon and J. Spencer, 
{\em The probabilistic method} 4th ed., Wiley, 2016.

\bibitem{A-K-S} M. Ajtai, J. Koml\'os and E. Szemer\'edi, 
A note on Ramsey numbers, 
\textit{J. Combin. Theory Ser. A} 29:354--360, 1980.

\bibitem{B-K} T. Bohman and P. Keevash, 
Dynamic concentration of the triangle-free process, 
\textit{arXiv:1302.5963}.

\bibitem{Bondy-Erdos} J.A. Bondy and P. Erd\H{o}s, 
Ramsey numbers for cycles in graphs, 
\textit{J. Combin. Theory Ser. B} 14:46--54, 1973.

\bibitem{Bol} B. Bollob\'as, 
{\em Modern graph theory}, Springer-Verlag, 1998.

\bibitem{BJMRRY} B. Bollob\'as, C.J. Jayawardene, 
Z.K. Min, C.C. Rousseau, H.Y. Ru, and J. Yang, 
On a conjecture involving cycle-complete graph Ramsey numbers, 
\textit{Australas. J. Combin.} 22:63--72, 2000.

\bibitem{Bon} J.A. Bondy, Pancyclic graphs I, 
{\em J. Combin. Theory Ser. B} 11:80--84, 1971.

\bibitem{BE} S.A. Burr and P. Erd\H{o}s, 
On the magnitude of generalized Ramsey numbers for graphs, 
in: Infinite and Finite Sets I (Colloq., Keszthely, 1973), 
Colloq. Math. Soc. Janos Bolyai 10:214--240, 1975.

\bibitem{BE-good} S.A. Burr and P. Erd\H{o}s, 
Generalizations of a Ramsey-theoretic result of Chv\'atal, 
\textit{J. Graph Theory} 7:39--51, 1983.

\bibitem{C-L-R-Z} Y. Caro, Y. Li, C. Rousseau and Y. Zhang, 
Asymptotic bounds for some bipartite graph-complete graph Ramsey numbers,
\textit{Disc. Math.} 220:51--56, 2000.


\bibitem{CS} G. Chen and R.H. Schelp, 
Graphs with linearly bounded Ramsey numbers, 
\textit{J. Combin. Theory Ser. B} 57:138--149, 1993.

\bibitem{CG-EoG} F. Chung and R. Graham, 
{\em Erd\H{o}s on graphs: his legacy of unsolved problems}, 
A.K. Peters, 1998.

\bibitem{ChvHar} V. Chv\'atal and F. Harary, 
 Generalized Ramsey theory for graphs, III. Small off-diagonal numbers,
 \textit{Pacific J. Math} 41:335--345, 1972.

\bibitem{CRST} 
V. Chv\'atal, V. R\"odl, E. Szemer\'edi and W.T. Trotter Jr., 
The Ramsey number of a graph with bounded maximum degree, 
\textit{J. Combin. Theory Ser. B} 34:239--243, 1983.

\bibitem{Con} D. Conlon, 
A new upper bound for diagonal Ramsey numbers, 
\textit{Ann. of Math.} 170:941--960, 2009.

\bibitem{Dirac} G. Dirac, 
Some theorems on abstract graphs, 
\textit{Proc. London Math. Soc.} 2:69--81, 1952.

\bibitem{Erdos-prob-meth} P. Erd\H{o}s, 
Remarks on a theorem of Ramsey, 
\textit{Bull. Res. Council Israel Sect. F} 7F:21--24, 1957.

\bibitem{Erdos-Ram-LB} P. Erd\H{o}s, 
Some remarks on the theory of graphs, 
\textit{Bull. Amer. Math. Soc.} 53:292--294, 1947.

\bibitem{EFRS} 
P. Erd\H{o}s, R.J. Faudree, C.C. Rousseau and R.H. Schelp, 
On cycle-complete graph Ramsey numbers,
\textit{J. Graph Theory} 2:53--64, 1978.

\bibitem{EG} P. Erd\H{o}s and T. Gallai, 
On maximal paths and circuits of graphs, 
\textit{Acta Math. Acad. Sci. Hungar.} 10:337--356, 1959.

\bibitem{ES} P. Erd\H{o}s and G. Szekeres, 
A combinatorial problem in geometry, 
\textit{Compositio Math.} 2:463--470, 1935.

\bibitem{FS} R.J. Faudree and R.H. Schelp, 
All Ramsey numbers for cycles in graphs, 
\textit{Disc. Math} 8:313--329, 1974.

\bibitem{FGM} G. Fiz Pontiveros, S. Griffiths and R. Morris, 
The triangle-free process and the Ramsey number $R(3,k)$, 
\textit{Mem. Amer. Math. Soc}, to appear.

\bibitem{FS-density} J. Fox and B. Sudakov, 
Density theorems for bipartite graphs 
and related Ramsey-type results, 
\textit{Combinatorica} 29:153--196, 2009.

\bibitem{FS-BE} J. Fox and B. Sudakov, 
Two remarks on the Burr-Erd\H{o}s conjecture, 
\textit{Europ. J. Combin.} 30:1630--1645, 2009.

\bibitem{FS-DRC} J. Fox and B. Sudakov, 
Dependent random choice, 
\textit{Random Struct. Alg.} 38:1--32, 2011.

\bibitem{GRS} R. Graham, B. Rothschild and J. Spencer, 
{\em Ramsey Theory}, John Wiley \& Sons, 1990.

\bibitem{KS-pancyclic} P. Keevash and B. Sudakov, 
Pancyclicity of Hamiltonian and highly connected graphs, 
\textit{J. Combin. Theory Ser. B} 100:456--467, 2010.

\bibitem{Kim} J.H. Kim, 
The Ramsey number $R(3,t)$ has order of magnitude $t^2/ \log t$, 
\textit{Random Struct. Alg.} 7:173--207, 1995.

\bibitem{KS-reg} J. Koml\'os and M. Simonovits, 
Szemer\'edi's regularity lemma and its applications in graph theory, 
in: Combinatorics, Paul Erd\H{o}s is eighty, 
Bolyai Soc. Math. Stud. 2:295--352, 
J\'anos Bolyai Math. Soc., Budapest, 1996.

\bibitem{K-R} A. Kostochka and V. R\"odl, 
On graphs with small Ramsey numbers, 
\textit{J. Graph Theory} 37:198--204, 2001.

\bibitem{K-R-II} A. Kostochka and V. R\"odl, 
On graphs with small Ramsey numbers II, 
\textit{Combinatorica} 24:389--401, 2004.

\bibitem{KS-sparse} A. Kostochka and B. Sudakov, 
On Ramsey numbers of sparse graphs, 
{\em Combin. Prob. Comput.} 12:627--641, 2003.

\bibitem{Lee} C. Lee, Ramsey numbers of degenerate graphs, 
\textit{Ann. of Math.} 185:791--829, 2017.

\bibitem{L-cube} E. Long, 
Long paths and cycles in subgraphs of the cube,
\textit{Combinatorica} 33:395--428, 2013. 

\bibitem{L-thesis} E. Long, 
Long paths in the cube and other combinatorial results, 
PhD thesis, University of Cambridge, 2013.

\bibitem{MS} M. Miller and J. \v Sir\'a\v n, 
Moore graphs and beyond: 
A survey of the degree/diameter problem, 
{\em Electron. J.  Combin.} Dynamic survey DS14.

\bibitem{Nikiforov} V. Nikiforov, 
The cycle-complete graph Ramsey numbers, 
\textit{Combin. Probab. Comput.} 14:349--370, 2005. 

\bibitem{Pokrovskiy-Sudakov} 
A. Pokrovskiy and B. Sudakov, Ramsey goodness of paths, 
\textit{J. Combin. Theory Ser. B} 122:384--390, 2017. 

\bibitem{Ramsey} F.P. Ramsey, On a Problem of Formal Logic, 
\textit{Proc. London Math. Soc.} 30:264--286, 1930.

\bibitem{rosta} V. Rosta, 
On a Ramsey type problem of J.A. Bondy and P. Erd\H{o}s, I \& II,
\textit{J. Combin. Theory Ser. B} 15:94--120, 1973.

\bibitem{Schiermeyer} I. Schiermeyer, 
The Cycle-Complete Graph Ramsey Number $r(C_5, K_7)$,
{\em Disc. Math. Graph Theory} 25:129--139, 2005.

\bibitem{Sch} I. Schiermeyer, 
All cycle-complete graph Ramsey numbers $r(C_m,K_6)$, 
\textit{J. Graph Theory} 44:251--260, 2003.

\bibitem{Shearer} J. Shearer, 
A note on the independence number of triangle-free graphs, 
\textit{Discrete Math.} 46:83--87, 1983.

\bibitem{Spen-CCRN} J. Spencer, 
Asymptotic lower bounds for Ramsey functions, 
\textit{Disc. Math.} 20:69--76, 1977.

\bibitem{Spen} J. Spencer, 
Ramsey's theorem -- a new lower bound, 
\textit{J. Combin. Theory, Ser. A} 18:108--115, 1975.

\bibitem{Sudakov} B. Sudakov, 
A note on odd cycle-complete graph Ramsey numbers, 
\textit{Electron. J. Combin.} 9:N1, 2002.

\bibitem{Turan} P. Tur\'an, 
Eine Extremalaufgabe aus der Graphentheorie, 
\textit{Mat. Fiz. Lapok.} 48:436--452, 1941. 

\bibitem{Thom} A. Thomason, 
An upper bound for some Ramsey numbers, 
\textit{J. Graph Theory} 12:509--517, 1988.

\bibitem{Y-H-Z} J.S. Yang, Y.R. Huang and K.M. Zhang, 
The value of the Ramsey number $R(C_n, K_4)$ 
is $3(n -1) + 1$ $(n \geq 4)$, 
\textit{Australas. J. Combin.} 20:205--206, 1999.


\end{thebibliography}
\end{document}